\newtheorem{theorem}{Theorem}[section]
\newtheorem{lemma}[theorem]{Lemma}
\newtheorem{definition}[theorem]{Definiton}
\newtheorem{proposition}[theorem]{Proposition}
\newtheorem{corollary}[theorem]{Corollary}
\theoremstyle{definition}
\newenvironment{remark}
  {\pushQED{\qed}\remx}
  {\popQED\endremx}
\newcommand{\R}{\mathbb{R}}
\newcommand{\C}{\mathbb{C}}
\newcommand{\T}{\mathbb{T}}
\newcommand{\abs}[1]{\lvert#1\rvert}
\numberwithin{equation}{section}
\begin{document}
\address{Seynabou Gueye
\newline \indent AIMS-Senegal
\newline \indent KM2 Route de Joal
\newline\indent P.O Box,1418, Mbour-Thi\`{e}s, S\'{e}n\'{e}gal}
\email{gueyenabou94@gmail.com, 
seynabou.gueye@aims-senegal.org}
\address{Filone G. Longmou-Moffo
\newline \indent AIMS-Senegal
\newline \indent KM2 Route de Joal
\newline\indent P.O Box,1418, Mbour-Thi\`{e}s, S\'{e}n\'{e}gal}
\email{longmoumoffofilonegilson@gmail.com, longmou.m.f.gilson@aims-senegal.org}

\address{Mouhamadou Sy
\newline \indent AIMS-Senegal
\newline \indent KM2 Route de Joal
\newline\indent P.O Box,1418, Mbour-Thi\`{e}s, S\'{e}n\'{e}gal}
\email{mouhamadous314@gmail.com, mouhamadou.sy@aims-senegal.org}

\title[Probabilistic GWP for energy supercritical NLS on compact manifolds]{Probabilistic global-wellposedness for the energy-supercritical Schr\"odinger equations on compact manifolds}
\author{Seynabou Gueye, Filone G. Longmou-Moffo and Mouhamadou Sy}

\begin{abstract}
We consider the nonlinear Schr\"odinger equations with a general nonlinearity power in all dimensions. We construct invariant measures concentrated on Sobolev spaces $H^s$ of singular orders, $s\leq\frac{d}{2}$. We prove almost sure global wellposedness and bounds on the growth in time of the solutions via invariant measure arguments. Our setting includes a generic compact Riemannian manifold; we specify the cases of the torus and Zoll manifolds.
\end{abstract}
\bigskip

\keywords{Energy supercritical NLS, compact manifolds, invariant measure, GWP, long-time behavior}
\subjclass[2020]{35A01, 35Q55, 35R11, 60H15, 37K06, 37L50.}

\maketitle


\section{Introduction}
\subsection{Context}In the present work, we consider the nonlinear Schrodinger equation (NLS)
\begin{equation}\label{Intro_NLS}
    \partial_tu-i(\Delta u-\abs{u}^{2q}u)=0,\quad \quad (u(t,x),t,x)\in \mathbb{C}\times \mathbb{R}\times{M^d}
\end{equation}
supplemented with an initial condition
\begin{equation}\label{Intro_IC}
    u|_{t=0}=u_0\in H^s(M^d)
\end{equation}
where $M^d=(M^d,g)$ is a compact Riemannian manifold of dimension $d$ with metric $g$, and $H^s(M^d)$ is the Sobolev space of order $s\in\R$ defined on $M^d$. The operator $\Delta$ entering the equation is the classical Laplace-Beltrami operator associated to $g$.\\
 When $M^d$ has boundary, which will be assumed to be smooth enough, we will supplement \eqref{Intro_NLS}, \eqref{Intro_IC} with the Dirichlet condition
\begin{equation}
    u|_{\partial M^d}=0.
\end{equation}
Formally, the NLS equation \eqref{Intro_NLS} is invariant under the scaling transformation $u_\lambda(t,x)=\lambda^{\frac{1}{q}} u(\lambda^{2}t,\lambda x)$. One can easily observe that the corresponding scaling invariant $L^2-$based Sobolev space is $H^{s_{q,d}}$, where
\begin{equation*}
    s_{q,d}=\frac{d}{2}-\frac{1}{q}.
\end{equation*}
We will refer this number as the critical exponent.
It describes, from a heuristics, the threshold of regularity above which local wellposedness is expected to occur. Namely, we generally expect local wellposedness of NLS for initial data in $H^s, s>s_{q,d}$.\\
Smooth enough solutions of NLS also enjoy the following two conservation laws
\begin{align}
    M(u) &=\frac{1}{2}\int_{M^d}|u(t,x)|^2dx, &\text{(Mass)}\\
    E(u) &=\frac{1}{2}\int_{M^d}|\nabla u(t,x)|^2dx+\frac{1}{2q+2}\int_{M^d}|u(t,x)|^{2q+2}dx. &\text{(Energy)}
\end{align}
The equation can then be classified as energy subcricitial, energy critical or energy supercritical depending on whether the critical exponent is smaller, equal or greater than the energy exponent $s=1$. This reflects the controllability issue of the potential part of the energy by the kinetic part; in other words, the control of nonlinear effects by linear ones. The energy supercritical regimes correspond to the situation where the nonlinear contributions can no longer be controlled by the linear effects. In this case, the local theory is generally illposed on the energy space. This leads to serious obstructions in the global in time analysis even for higher-order regularities, making the supercritical energy NLS much less understood than the subcritical or critical counterparts. Their global regularity problem remains a very difficult open question. Below, we recall some recent developments regarding this problem. In this paper, we aim to address this issue and to construct well-behaved global solutions for energy supercritical NLS on compact Riemannian manifolds of any dimension, for \textit{singular data} (i.e. below the $H^\frac{d}{2}$-regularity).\\

\subsection{Background and earlier results}

The nonlinear Schrödinger equation arises from diverse physics theories and has applications in areas such as nonlinear optics, plasma physics, quantum mechanics, and fluid dynamics. Its mathematical analysis has a remarkably rich history, driving the development of many analysis tools of independent interest. We are concerned with the following three general questions.
\begin{enumerate}
    \item Local wellposedness
    \item Global regularity
    \item Asymptotic behavior in time
\end{enumerate}

\subsubsection{Local wellposedness} As for local wellposedness, Strichartz estimates are fundamental tools for establishing useful controls allowing a fixed-point strategy. These estimates reflect the dispersive properties of the equation, which are very sensitive to the geometry of the physical space. Let us define the scaling admissibility condition of a pair $(r,p)$ for the Schr\"odinger equation
\begin{align}\label{Intro_admissibility}
    \frac{2}{r} = d \left(\frac{1}{2} - \frac{1}{p}\right).
\end{align}

\begin{itemize}
    \item For Euclidean spaces \(\mathbb{R}^d\), we can combine functional analytic techniques (such as interpolation and duality arguments) with the disperive inequality provided by the linear Schr\"odinger flow to obtain the following Strichartz estimate \cite{strichartz1977restrictions,ginibre1985global,keel1998endpoint}. For the endpoint case, one uses the $TT^*$ argument of Keel-Tao:
\[
\| e^{it\Delta}u \|_{L^r_t(\R;L^p_x)} \leq C \| u \|_{L^2_x}, \quad \text{with} \quad r \geq 2 \quad\text{and $(r,p)\neq (2,\infty)$}\quad\text{satisfying}\ \ \eqref{Intro_admissibility}.
\]

    \item For a generic compact Riemannian manifold, the properties of the linear Schr\"odinger flow are less conspicuous. In particular, the dispersion inequality that was obtained from the properties of Fourier transform in Euclidean spaces is not directly available. However, following Burq-G\'erard-Tzvetkov, one can proceed by localization arguments, using a semiclassical analysis, to obtain a localized version of the dispersion inequality. Then, a reconstruction argument yields an estimate with a loss of a fraction of regularity compared to the Euclidean case. This holds for compact Riemannian manifolds without /or  with boundary \cite{staffilani2002strichartz,burq2004strichartz,ivanovici2010schrodinger,anton2008strichartz,blair2012strichartz}:
    \begin{align}\label{I_S_C}
\| e^{it\Delta}u \|_{L^r_t(I;L^p_x)} \leq C(I) \| u \|_{H^{\frac{1}{r}}_x}, \quad \text{with} \quad |I|<\infty, \quad r \geq 2,\ q<\infty \quad\text{satisfying}\ \ \eqref{Intro_admissibility}.
\end{align}
\item For specific geometries, refined Strichartz type estimates are available (see Section \ref{LWP}).

\end{itemize}
It is worth mentioning the use of multilinear versions of Strichartz estimates in establishing local well-posedness (see, e.g. \cite{bourgain1993fourier,bgtBil,tzvNLS,tzvNLS06,an,htt,syyu2020almost,syyu2022globalfrac}). This is performed within the framework of Bourgain spaces, that appear more suited to analyze dispersive problems than the parabolic spaces $L_t^pL_x^q$, specifically in low regularities. Local well-posedness results for NLS on both Euclidean spaces and compact settings can be found in \cite{cazenave1990cauchy,bourgain1993fourier,burq2004strichartz,bgtBil,herr2013quintic} and references therein.

\subsubsection{Global well posedness}

In the context of energy subcritical local theories posed in  $H^1$, a blow-up criteria can be stated as follows:
\begin{align}
    T<\infty\implies\lim_{t\uparrow T}\|u(t)\|_{H^1}=+\infty.
\end{align}
Hence, energy conservation can be used in an iteration procedure to obtain global regularity in $H^1$. Global $H^1$ solutions were obtained in various settings, including $\R^d$ and compact manifolds \cite{ginibre1985global,brezis1980nonlinear,bourgain1993fourier,burq2003cauchy,bgtBil}. As for regularities $s\in(s_{q,d},1)$, more sophisticated arguments such as the high-low method of Bourgain \cite{bourRef} and the I-method introduced by Colliander-Keel-Staffilani-Takaoka-Tao \cite{ckst} exploit the energy in decomposition/truncation procedures to perform globalization for local solutions living below the energy space $H^1$ (see also \cite{dodson2019global} and references therein).\\
Returning to $H^1$ global regularity, for the energy critical NLS, the time of local existence is not only a function of the size of the data, but also depends on its profile (or, say, the energy accumulation). To treat the possible concentration of the energy density which would lead to finite time blow up, it is required to perform more sophisticated analysis aimed at ruling out such scenario. This was achieved in radial setting in \cite{bourgain1999global,GrllakscrtNLS,tao2004global}, for the general Euclidean case in \cite{CKSTTcrtNLS,ryckman2007global,visan2007defocusing}, on other unbounded manifolds (hyperbolic space) in \cite{ionescu2012global}, and on compact manifolds (torus, sphere-like manifolds) in \cite{htt,ionpaus,herr2013quintic,ptw}.   \\
As for energy-supercritical regimes, a blow-up criterion was obtained for the Euclidean spaces by Killip and Visan \cite{killip2010energy} (see also \cite{kenig2011nondispersive}) at least for $d\geq 5$:
\begin{align}
    T<\infty\implies\lim_{t\uparrow T}\|u(t)\|_{H^{s_{q,d}}}=+\infty.
\end{align}
Recently, Merle-Rapha\"el-Rodnianski-Szeftel showed, in a groundbreaking work, the finite-time explosion of \eqref{Intro_NLS} on $\R^d$ for the regimes $(d,q)\in\{(5,9),(6,5),(8,3),(9,3)\}$ \cite{merle2022blow}. Partial solutions were constructed in $\R^d$ in \cite{beceanu2019large}. We also note the quasiperiodic solutions constructed by Wang in the periodic setting \cite{wang2016energy}. The third author of the present work developed the Inviscid-Infinite dimensional limit (IID limit) approach in the context of the energy supercritical  NLS posed in $\T^d$ and constructed various invariant measures and proved global well-posedness in the corresponding statistical ensembles \cite{sy2021almost}. These measures are supported by regular data, that is, $H^s$ with $s>\frac{d}{2}$. Then the work \cite{syyu2020almost,sy2022global} proved similar results on the unit ball of $\R^d,\ d={3,4,5}$ for singular data, namely on $H^s$ with $s\leq\frac{d}{2}$.
\subsubsection{Long-time behavior}
On Euclidean spaces, global solutions of the NLS equation \eqref{Intro_NLS} are known to exhibit scattering as time goes to infinity. This follows from decay estimates that can be established in this setting (see \cite{tao2006nonlinear} and references therein). Hence in the long run the flow of the NLS acts nearly linearly on the phase space. 

However, on bounded domains, such estimates are no longer available, and the solutions of the NLS equation do not scatter. The generic long-time behavior issue still remains an unsolved case. Nevertheless, intensive work has been done on the question. Some directions have been investigated, such as the weak turbulence behavior studied through the growth of Sobolev norms of the solutions (see e.g. \cite{colliander2010transfer,hani2015modified}); or the recurrence phenomena via invariant measures (see the literature mentioned below concerning probabilistic arguments).

\subsection{Probabilistic approaches to the global regularity problem}
  Probabilistic globalization methods rely on the idea of using invariant measures as a substitute for a conservation law. This is achieved  through very elaborated procedures combining both deterministic and probabilistic/stochastic arguments. Below, we will present these methods in more details since our globalization argument relies on such techniques.

\begin{enumerate}
    \item {\bfseries Gibbs measures} (see e.g. \cite{lebowitz1988statistical,bourgain1994periodic,bourgNLS96,tzvNLS06,tzvNLS,btt18,nahmod2012invariant,deng2024invariant,bringmann2024invariant,bourgain2014invariant}). This approach relies on the construction of a Gibbs measure based on the energy functional
    \begin{align}
        d\mu = \frac{1}{Z}e^{-E(u)}``du"=\frac{1}{Z}e^{-\frac{1}{2q+2}\|u\|_{L^{2q+2}}^{2q+2}}d\nu;
    \end{align}
    where the measure $d\nu$ is interpreted as a Gaussian measure with covariance operator $(-\Delta)^{-1}$ on an appropriate Hilbert space. Once we have that $u\in L^{2q+2}$ almost surely, we can obtain $d\mu$ well-defined (and non-degenerate) via Radon-Nikodym theorem. Here $Z$ is a normalizing constant. The measure $d\nu$ can be seen as the distribution of the Gaussian random variable
    \begin{align}
        \xi(\omega,x)=\sum_n\frac{1}{\sqrt{\lambda_n}}\xi_n(\omega)e_n(x),
    \end{align}
    where $(\xi_n)$ are i.i.d. complex Gaussian random variables and $(\lambda_n,e_n)$ are spectral data of $-\Delta$ on $(M,g)$. This results in that $\xi$ is a $H^s$-valued Gaussian random variable with $s<1-\frac{d}{2}$.
    \\
    A fundamental future of this theory is the construction of an statistical ensemble. From technical point of view, it can be seen as containing data whose local solutions enjoy, individually, some apriori estimate derived from the moment bounds on the invariant measure. Such an apriori estimate allows the iteration of the local theory to obtain global wellposedness. 
    \item {\bfseries The fluctuation-dissipation} (see e.g. \cite{kuksin2004eulerian,kuksin2004randomly,kuksin2012mathematics,sybo,sykg,foldesSy2021invariant,latocca2023construction}). This method consists in coupling the PDE with a ``heat bath'' which can be decomposed into a dissipation term plus a fluctuation one.
    \begin{align}
        \text{LHS of}\ \eqref{Intro_NLS}=\alpha\,\textbf{Dissip}+\sqrt{\alpha}\,\textbf{Fluct}.
    \end{align}
    Here $\alpha\in (0,1)$ is a small `coupling' parameter. The dissipation and the fluctuation are correctly scaled w.r.t. $\alpha$ to produce useful bounds to be used in the inviscid limit ($\alpha\to 0$). The dissipation is modeled by a negative operator and the fluctuations by a stochastic forcing.
    An stationary measure will result from the existence of a Lyapunov functional combined with the Markov property of the new system. The fluctuation-dissipation relation makes the dissipation effects uniformly controlled by the fluctuation ones. This provides bounds on the moments of the measure that are independent of the coupling parameter $\alpha$. An inviscid limit combined with a compactness argument allows to establish global unique solutions through invariant measure consideration.
    
    \item {\bfseries IID limit} ( see e.g. \cite{sy2021almost,syyu2020almost,syyu2022global,foldessy2023almost}). The Gibbs measures approach meets strong limitations when facing supercritical PDEs due to the singularity of its support ($s<1-\frac{d}{2}$); as soon as $d\geq 2$ the data become distributions for which the definition of the nonlinear term is problematic. On the other hand, the fluctuation-dissipation method does not suffer from support regularity issues ; however, it faces a limitation coming from the fact it doesn't include a detailed construction of a statistical ensemble. In addition, the bounds on the solutions obtained via this approach involve relatively weak integrability in time for the relevant space norm.  The latter is usually quadratic, which limits its applicability to high power nonlinearity that characterizes many supercritical regimes. \\ 
    To overcome these difficulties, the IID limit was introduced in \cite{sy2021almost} as a hybrid method that combines fluctuation-dissipation and Gibbs measures approaches. More precisely, it provides a framework where statistical ensembles can be built upon the fluctuation-dissipation measures. This in particular includes generalizations to non Gaussian-like measures of the overall approximation procedure of Bourgain, and a technical design of the``heat bath" to ensure the existence of the necessary large deviation bounds. This is the method that is adopted in the current work, due to the supercritical nature of the problems that are being studied.
\end{enumerate}
\subsection{Problematic and exposition of the main result}

 The globalization in the context of IID-limit heavily relies on a technical step consisting of the design of a suitable dissipation operator. This determines the regularity and dimensions for which the overall approach could work. In Sy (\cite{sy2021almost}), the dissipation operator allowed to treat smooth regularities $s>\frac{d}{2}$.\\
This was developed further for regularities $s\leq \frac{d}{2}$ in Sy and Yu \cite{sy2022global},\cite{syyu2020almost} in dimension $3$ (and $4$ and 5 for the case $q=1$), in the context of the unit ball, and includes the setting of fractional NLS.\\
While the work \cite{sy2022global} included algebraic powers of NLS on the unit ball of $\R^3$, \cite{syyu2020almost} studied only the case $q=1$ in higher dimensions (the classical theory of global well-posedness were studied in $d=3,4,5$).

In this paper, we address the extension of these results in the following directions:

\begin{enumerate}
    \item Extension on the physical space. Our result includes general compact Riemannian manifold settings;
    \item Extension on the dimension. We consider all dimensions;
    \item We include all ranges of singular Sobolev spaces $(s\leq \frac{d}{2})$ for which local wellposedness is established. We globalize these local theories;
    \item We also include all power nonlinearities beyond certain threshold.
\end{enumerate}
\leavevmode\\
This leads us to three problems determined following the structure of the available Strichartz estimate. Based on three relevant characteristics: type of manifold ($M^d$), the nonlinearity power of the equation $(q)$, and the Sobolev regularity order of the initial data $H^s$:
\begin{align}\label{tableau}
\begin{array}{|c||c|c|c|c|}
\hline
\text{\bfseries } & \text{Riemannian Manifold} & \text{Nonlinearity order } q\geq & \text{Regularity of the initial data}~~s> \\
\hline
\text{Problem 1.} & \text{General compact} &  1 &  \frac{d}{2}-\frac{1}{2q} \left(=s_{q,d}+\frac{1}{2q}\right) \\
\hline
\text{Problem 2.} & \text{Tori} & 1+\frac{2}{d} &s_{q,d} \\
\hline
\text{Problem 3.} & \text{Zoll manifolds}&  2 & s_{q,d}\\
\hline
 \text{Notation} & M^d & =:q_{M^d} & =:s_{M^d}\\
\hline
\end{array}
\end{align}

Here is our main result.
\begin{theorem}\label{recap}
Let $(M^d,g)$ be a compact Riemannian manifold of dimension $d\geq 3$.\\ Let $s\in (s_{M^d},\frac{d}{2}]$~~ and $q\geq q_{M^d}$. For all $ \epsilon>0,$ there are a set $\Sigma=\Sigma_{q,s,\epsilon}\subset H^s$ and a probability measure $\mu=\mu_{q,s,\epsilon}$ such that :
    \begin{enumerate}
        \item There is a global flow $\phi^t$ for NLS  on $\Sigma$, and $ \phi^t\Sigma=\Sigma$ for all $t$;\\ 
        \item For all $u_0\in\Sigma$, we have the bound
        \begin{align}
            \|\phi^tu_0\|_{H^{s^-}}\leq C(\|u_0\|_{H^{s}})(1+|t|)^{\epsilon}\quad \forall t\in \mathbb{R};
        \end{align}
        \item $\mu(\Sigma)=1$;
        \item The measure $\mu$ is invariant under $\phi^t$;
        \item The distribution of functional $u\mapsto\|u\|_{L^2}$ under $\mu$ is absolutely continuous w.r.t. the Lebesgue measure on $\R$.
    \end{enumerate} 
\end{theorem}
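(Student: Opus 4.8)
The plan is to realize the IID-limit scheme sketched in the introduction, in three coordinated stages: a fluctuation-dissipation construction producing a candidate invariant measure, an inviscid passage to the limit, and a Bourgain-type globalization built on the Strichartz theory recorded in \eqref{tableau}. First I would introduce the damped-driven approximation of \eqref{Intro_NLS},
\[
du - i(\Delta u - \abs{u}^{2q}u)\,dt = -\alpha\, L_\alpha u\,dt + \sqrt{\alpha}\,\eta\,dW,
\]
where $\alpha\in(0,1)$ is the coupling parameter, $W$ is a cylindrical Wiener process on $L^2(M^d)$, and the dissipation operator $L_\alpha$ together with the noise amplitude $\eta$ are built from the spectral data $(\lambda_n,e_n)$ of $-\Delta$ so that the fluctuation-dissipation balance forces the stationary measures to concentrate exactly on $H^s$ for the prescribed singular order $s\in(s_{M^d},\frac{d}{2}]$. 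This co-design of $L_\alpha$ and $\eta$ is the decisive modeling step: it must be tuned against the loss-of-derivative estimate \eqref{I_S_C} (and its refinements on tori and on Zoll manifolds) so that the resulting a priori bounds carry enough time integrability to absorb the high power $2q$ of the nonlinearity.

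Second, for each $\alpha$ I would produce a stationary measure $\mu_\alpha$ by the Krylov-Bogoliubov method applied to the Markov semigroup of the approximate system, the tightness coming from a Lyapunov functional assembled from the mass $M(u)$, the energy $E(u)$, and higher conserved-like quantities. The fluctuation-dissipation relation then yields moment estimates
\[
\int \norm{u}_{H^s}^p \, d\mu_\alpha(u) + \int \Phi\bigl(\norm{u}_{H^s}\bigr)\, d\mu_\alpha(u) \leq C_p,
\]
uniform in $\alpha$, where $\Phi$ is an exponential-type weight encoding the large-deviation bounds needed for globalization. From these I would extract tightness on $H^{s^-}$ and pass to the inviscid limit $\alpha\to 0$: the dissipation and fluctuation terms are scaled to vanish, and continuity of the NLS flow lets one pass to the limit in the stationarity identity, so that any weak limit $\mu$ of $(\mu_\alpha)$ is invariant under $\phi^t$. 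This delivers items (3) and (4).

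Third, I would run the deterministic local theory. The three cases of \eqref{tableau} give local well-posedness in $H^s$ for $s>s_{M^d}$ and $q\geq q_{M^d}$, with a lifespan bounded below in terms of $\norm{u_0}_{H^s}$. Bourgain's argument then recycles the invariant measure: one defines $\Sigma$ as a $\limsup$ set of data whose iterated local solutions obey the a priori bound at each dyadic time, and the invariance of $\mu$ combined with the uniform exponential moment bounds gives $\mu(\Sigma)=1$ by Borel-Cantelli, proving (3) again and yielding simultaneously the global flow with $\phi^t\Sigma=\Sigma$ (item (1)) and the polynomial-in-time estimate (item (2)), the exponent $\epsilon$ being tuned through the dyadic decomposition. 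For item (5) I would track the law $(\norm{\cdot}_{L^2})_\#\mu_\alpha$, which is absolutely continuous owing to the non-degeneracy of $\eta$ in the low modes, and show by uniform control of its density that absolute continuity survives the inviscid limit for $(\norm{\cdot}_{L^2})_\#\mu$.

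I expect the main obstacle to be precisely the uniform-in-$\alpha$ a priori bounds with strong enough time integrability. As the introduction already flags, the plain fluctuation-dissipation method typically delivers only quadratic integrability, which is insufficient for the high-power supercritical nonlinearities treated here; overcoming this forces the delicate adaptation of $L_\alpha$ and $\eta$ to the manifold-dependent Strichartz estimates so that the exponential (large-deviation) moment bounds both survive the limit $\alpha\to 0$ and remain quantitatively strong enough to close the globalization uniformly over all $q\geq q_{M^d}$ and all $s\in(s_{M^d},\frac{d}{2}]$.
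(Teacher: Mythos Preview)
Your outline follows the broad IID-limit philosophy of the paper, but it diverges from the actual argument in two structural respects and leaves underspecified the step that carries most of the technical weight.

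First, the paper does \emph{not} run the fluctuation--dissipation scheme directly on the full PDE. It works on a Galerkin projection $P_N$ throughout: one constructs stationary measures $\mu_N^\alpha$ for the damped--driven \emph{finite-dimensional} system, passes to the inviscid limit $\alpha\to 0$ at fixed $N$ to obtain a measure $\mu_N$ invariant under the Galerkin flow $\phi_N^t$, and only then sends $N\to\infty$. This two-limit structure (inviscid, then infinite-dimensional) is the ``IID'' of the method, and the Galerkin step is not optional: stochastic global well-posedness for the supercritical equation with dissipation and noise is unavailable in infinite dimensions, so your first two stages as written do not get off the ground. The globalization then proceeds by building statistical ensembles $\Sigma^i_{N,s'}$ at the $N$-level, transferring them to the limit via a Skorokhod representation and a $\limsup$ argument on the underlying probability space (Lemma~\ref{Lemma_full_mu}), and invoking a deterministic globalization lemma that exploits the uniform-in-$N$ local theory and a convergence lemma for $\phi_N^t\to\phi^t$.

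Second, the large-deviation input is \emph{polynomial}, not exponential. The dissipation operator is the explicit nonlinear functional
\[
\mathcal{L}_s(u)=(-\Delta)^{s-1}u+C_{d,s}\,\lVert u\rVert_{H^{s^-}}^{3\tilde{k}}\,u,\qquad \tilde{k}=\epsilon^{-1},
\]
and the coercivity estimate \eqref{Coercive_E} (obtained via the C\'ordoba--C\'ordoba inequality and a Christ--Weinstein chain rule) yields only the moment bound $\int \lVert u\rVert_{H^{s^-}}^{3\tilde{k}}\,d\mu_N\leq C$ uniformly in $N$. This produces the power-type tail estimate $\mu_N(E_N^a\setminus\Sigma^i_{N,s'})\leq C\,i^{-2\tilde{k}}$, which is exactly what drives the growth bound $(1+|t|)^{\epsilon}$ in item~(2); an exponential moment would give a logarithmic bound, which the paper does \emph{not} obtain. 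Your assertion of an exponential weight $\Phi$ is therefore too optimistic, and more importantly, the design of $\mathcal{L}_s$ --- which you leave as a black box --- is precisely where the dimension, the power $q$, and the target regularity $s\in(s_{M^d},\tfrac{d}{2}]$ are reconciled. Without specifying this operator and verifying the coercivity of $\mathcal{E}(u)$ against the nonlinear term $\langle(-\Delta)^{s-1}u,|u|^{2q}u\rangle$, the uniform bounds you need do not follow.
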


\begin{remark}
Notice that for tori and Zoll manifolds we can cover all regularities above $s_{q,d}$, which is not the case for the general manifold. This is due to Strichartz estimates obtained in Bourgain \cite{bourgain1993fourier} (for the torus) and Burq-G\'erard-Tzvetkov \cite{burq2004strichartz} (for Zoll manifolds). 
To simplify the presentation, we opted for establishing a local wellposedness theory relying only on linear Strichartz estimates. This leads to the restriction on the nonlinearity power $q_{M^d}$. However, using a multilinear analysis within the framework of Bourgain spaces, one could push the local wellposedness theory for the lower powers (see e.g. \cite{bourgain1993fourier,bgtBil}), and then globalize the solutions by employing our argument. We will not pursue this direction here since higher powers are our main target. 
\end{remark}

\begin{remark}
    Our proof includes also a noticeable simplification of the infinite dimensional limit step in the IID limit approach (and possibly the corresponding step in the usual Gibbs measure approach). We bypass the use of restriction measures by employing a new and shorter argument in the proof of Lemma \ref{Lemma_full_mu} on the estimation of the size of the statistical ensemble. Beyond this, several technical modifications in this step were necessary due to the power nature of our large deviation estimate.
\end{remark}

\subsection{Organization of the paper}
In Section \ref{LWP}, we present a general local wellposedness theory of \eqref{Intro_NLS} and determine the LWP regularity ranges corresponding to the considered manifolds. This theory is based on Strichartz estimates. We establish a local convergence lemma of Galerkin projections of the equation and formulate a globalization lemma. Section \ref{Sect. Dissip_apriori} is devoted to the anaysis of dissipation operators and a priori estimates that will be useful in the globalization arguments. In Section \ref{Sect_Glob}, we construct invariant measures for the damped-driven Galerkin approximation. We perform the inviscid limit, and construct the statistical ensemble. We present the closing arguments of the proof of the main theorem.  
\subsection{Acknowledgement} The research of M. Sy is funded by the Alexander von Humboldt foundation under the “German Research Chair programme” financed by the Federal Ministry of Education and Research (BMBF). He also thanks the German Academic Exchange Service (DAAD) for supporting his research at AIMS Senegal. S. Gueye and F. G. Longmou-Moffo are grateful to the DAAD for funding their ongoing doctoral research at AIMS Senegal. The authors express their gratitude to Bielefeld University for support as part of a research cooperation that includes hosting programs.

\subsection{General notations}
\leavevmode\par
We denote by $M:=(M,g)$ be a Riemannian compact manifold of dimension $d $, $\Delta$ be the Laplace Beltrami operator on $M$ and $(e_n)_{n\in \mathbb{N}}$ a normalized eigenfunctions basis of $-\Delta$, orthonormal in $L^2(M)$. The associated eigenvalues are noted $(\lambda_n)_{n\in \mathbb{N}}$ such that \\$0=\lambda_0 <\lambda_1\leq \lambda_2 \leq .....\leq \lambda_n \leq ...$
Therefore for $u\in L^2(M)$, we have $$u(x)=\sum_{n\in \mathbb{N}}u_ne_n(x).$$
By Parseval identity, $$\lVert u \rVert^2_{L^2}:=\int_{M}{\abs{u(x)}^2 dx}=\sum_{n\in \mathbb{N}}\abs{u_n}^2.$$
We denote by $\lVert~~ \rVert_{L^p}$ the norm of the Lebesgue space $L^p(M),\ p\in[1,\infty]$, $\lVert \quad \rVert_{H^s}$ the norm of Sobolev space $H^s(M),\ s\in\R$ and and $|~|$ denotes the modulus of a complex number.\\
The Sobolev norm of $H^s$ is defined by $$\lVert u \rVert_{H^s}:=\sqrt{\abs{(1-\Delta)^{\frac{s}{2}}u}_2^2}=\sqrt{\sum_{n\in \mathbb{N}}(1+\lambda_n)^s\abs{u_n}^2}$$\\
We define a real inner product on $L^2$ by $$(u,v) = \text{Re}\left( \int_M u(x) \overline{v(x)} \, dx \right).$$ \\ We have the important property $(u,iu)=0$.\\\\
We denote by $E_N$ the subspace of $L^2$ generated by the finitely many family $\{e_n ;n\leq N\}$, the operator $P_N$ is the projector onto $E_N$ and $E_{\infty}$ refers to $L^2$, and the projector $P_{\infty}$ to the identity operator.\\\\
For a Banach space \( X \) and an interval \( I \subset \mathbb{R} \), we denote by \( C(I,X) = C_I^tX \) the space of continuous functions \( f : I \to X \). The corresponding norm is 
\[
\|f\|_{C^t_IX} = \sup_{t \in I} \|f(t)\|_X.
\]
Let $T>0$, we define by $X^s_T:=C((-T,T);H^s)$ the set of continuous functions \( f : (-T,T) \to H^s \).\\\\
For \( p \in [1,\infty) \), we also denote by \( L^p(I,X) = L^{p}_IX \), the spaces given by the norm
\[
\|f\|_{L^{p}_IX} = \left( \int_I \|f(t)\|_X^p \, dt \right)^{\frac{1}{p}}.
\]

Let $T>0, r,\sigma,p,s$ all positives numbers, we define $$Y_T^s:=C((-T,T);H^s)\cap L^r({(-T,T)};W^{\sigma,p}_x).$$
We have omitted to mention the dependence in $r,\ \sigma,\ p$ in the notation since they will be of secondary interest in our analysis. However, we will precise their range later, in the local theory. The corresponding norm is given by
 $$\rVert u \lVert_{Y_T^s}=\left(\sup_{t\in (-T,T)}\rVert u(t)\lVert_{H^s}^2+\left(\int_0^T\rVert u(\tau)\lVert_{W^{\sigma,p}}^rd\tau)\right)^{\frac{2}{r}}\right)^{\frac{1}{2}}.$$\\
The notation $B_R(X)$ refers to the closed ball with center $0$ and radius $R>0$ of the Banach space $X$.\\
The inequality \( A \lesssim B \) between two positive quantities \( A \) and \( B \) means \( A \leq C B \) for some \( C > 0 \).\\
We denote by $s^- = s - \varepsilon, \text{ for } \varepsilon > 0 \text{ close enough to } 0 \ (\text{we use } s^+ \text{ in a similar way}).$\\
 We denote by $\lfloor ~\rfloor$ the integer part of a number.
\\\\

\section{Local wellposedness} \label{LWP}
This section is devoted to establishing local wellposedness for NLS and to establish deterministic devices that are helpful in the globalization procedure. Here are the results:
\begin{enumerate}
    \item Establishing a general local wellposedness result based on a hypothetical Strichartz type estimate for NLS;
    \item Applying the general LWP for cases where Strichartz estimates are known. Hence the regularity ranges will be concretly determined;
    \item We highlight the uniformity of some quantities, such as the time of size increment, with respect to approximation parameter, such as $N$. This prepares us for the next step
    \item Proving local convergence and globalization lemmas.
\end{enumerate}

\subsection{Local well posedness}
Let us consider the Galerkin projected NLS
\begin{equation}\label{1}
\left\{
\begin{aligned}
\partial_t u &= i\left[ \Delta P_N u - P_N \left( |P_N u|^{2q} P_N u \right) \right],  \\
u(0) &= P_N u_0 \in E^N.
\end{aligned}
\right.
\end{equation}
where $1\leq N\leq\infty$. The following proposition establishes a general local well-posedness result conditioned on availability of a Strichartz estimate. In subsequent corollaries, we will specify the result case by case.
\begin{proposition}\label{local well posedness}
     Let $r,p, \delta$ be positive real numbers with $r>\max(2,2q)$ and $p\geq 1$. Assume that, for any finite time interval $I$, we have\\
     \begin{equation}\label{modified Strichartz estimate}
     \lVert e^{it\Delta}v_0\rVert_{L^r(I;L^p(M))}\leq C(I)\lVert v_0\rVert_{H^\delta}.
     \end{equation}
     \\Let $s>\frac{d}{p}+\delta$ $\text{and} ~~  \sigma=s-\delta$. Then we have the following
     \begin{itemize}
     \item (LWP) For any $R>0$, there is $T:=T(R,s)>0$ such that for any $N\in \mathbb{N}\cup \{\infty\}$, any $u_0 \in B_R(H^s)$, there is a unique $u_N \in Y_T^s$ satisfying (\ref{1}), where $$Y_T^s:=C((-T,T);H^s)\cap L^r((-T,T);W^{\sigma,p}_x).$$\\
     \item (Increment property) Moreover, the existence time $T>0$ can be chosen such that for some independent constant $C>0$, for any $u_0\in H^s$, we have that
     \[\lVert u_N \rVert_{Y^s_T} \leq 2C\lVert P_Nu_0 \rVert_{H^s}.\]\\
     \end{itemize}
\end{proposition}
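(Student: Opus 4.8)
The plan is to establish local wellposedness via a standard contraction-mapping argument applied to the Duhamel (integral) formulation of the Galerkin-projected equation \eqref{1}, working in the space $Y_T^s$. Writing the mild formulation
\begin{equation*}
u_N(t) = e^{it\Delta}P_Nu_0 - i\int_0^t e^{i(t-\tau)\Delta}P_N\bigl(|P_Nu_N|^{2q}P_Nu_N\bigr)\,d\tau =: \Phi(u_N)(t),
\end{equation*}
I would show that for a suitable radius and short time $T$, the map $\Phi$ is a contraction on a closed ball of $Y_T^s$. The crucial point is that all estimates must be \emph{uniform in $N$}: since $P_N$ is bounded on every $H^\sigma$ and on the relevant Lebesgue spaces with operator norm at most $1$, the projections introduce no $N$-dependent constants, so the same $T$ works for all $N\in\mathbb N\cup\{\infty\}$.

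The key steps, in order, are as follows. First I would control the linear evolution in the $Y_T^s$ norm: the $C^t_IH^s$ part is immediate since $e^{it\Delta}$ is unitary on every $H^s$, and the $L^r_tW^{\sigma,p}$ part follows by applying the hypothesized Strichartz estimate \eqref{modified Strichartz estimate} to $(1-\Delta)^{\sigma/2}u_0$, using $s = \sigma+\delta$; this yields $\|e^{it\Delta}P_Nu_0\|_{Y_T^s}\le C\|P_Nu_0\|_{H^s}$. Second, I would estimate the Duhamel term. Applying the Strichartz estimate and the energy estimate to the inhomogeneity, the task reduces to bounding the nonlinearity $\||u|^{2q}u\|_{L^{r'}_tW^{\sigma,\cdot}}$ by (a power of) $\|u\|_{Y_T^s}$. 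Here the condition $s>\frac{d}{p}+\delta$, i.e. $\sigma = s-\delta > \frac{d}{p}$, is exactly what makes $W^{\sigma,p}\hookrightarrow L^\infty$ by Sobolev embedding, so that the algebra/product estimate $\|\,|u|^{2q}u\,\|_{W^{\sigma,p}}\lesssim \|u\|_{W^{\sigma,p}}^{2q+1}$ holds; the condition $r>\max(2,2q)$ lets me use H\"older in time to pull out a positive power $T^\theta$ from the $2q$ lower-order factors, giving a gain $\|\Phi(u)-\Phi(v)\|_{Y_T^s}\le CT^\theta(\cdots)\|u-v\|_{Y_T^s}$. Choosing $T=T(R,s)$ small then yields the contraction on $B_{2CR}(Y_T^s)$ and hence the unique fixed point $u_N$.

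For the increment property, I would observe that the contraction is set up inside the ball of radius $2C\|P_Nu_0\|_{H^s}$: taking the fixed-point identity $u_N=\Phi(u_N)$ and estimating $\|\Phi(u_N)\|_{Y_T^s}\le C\|P_Nu_0\|_{H^s}+CT^\theta\|u_N\|_{Y_T^s}^{2q+1}$, the smallness of $T^\theta$ absorbs the nonlinear term into the linear bound and delivers $\|u_N\|_{Y_T^s}\le 2C\|P_Nu_0\|_{H^s}$, with $C$ and $T$ independent of $N$.

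I expect the main obstacle to be organizing the nonlinear estimate cleanly so that (i) the product/fractional-Leibniz bound $\|\,|u|^{2q}u\,\|_{W^{\sigma,p}}\lesssim\|u\|_{W^{\sigma,p}}^{2q+1}$ is justified for non-integer $\sigma$ and a general power $2q$ (this uses the fractional chain/Leibniz rule together with $W^{\sigma,p}\hookrightarrow L^\infty$), and (ii) the time-integration produces a genuinely positive power of $T$ despite the high power $2q+1$. The exponent bookkeeping in the time H\"older step — ensuring $r>\max(2,2q)$ gives the admissible split $\tfrac{1}{r'}=\tfrac{2q}{r}+\theta$ with $\theta>0$ — is where care is needed, and the uniformity in $N$ must be tracked throughout, but no step presents a genuine analytic difficulty beyond these standard dispersive-PDE techniques.
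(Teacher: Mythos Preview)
Your overall strategy matches the paper's: a contraction on a ball in $Y_T^s$ via Duhamel, using the Sobolev embedding $W^{\sigma,p}\hookrightarrow L^\infty$ (from $\sigma>d/p$) to handle the nonlinearity and extracting a positive power of $T$ by H\"older in time; the increment property is obtained exactly as you describe.

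The one substantive divergence is where you place the inhomogeneity. The paper does \emph{not} use a dual $L^{r'}_t$ inhomogeneous Strichartz estimate. From the single hypothesized linear estimate \eqref{modified Strichartz estimate} it derives, by a Minkowski argument, only
\[
\Bigl\|\int_0^t e^{i(t-\tau)\Delta}f(\tau)\,d\tau\Bigr\|_{L^r_TW^{\sigma,p}}\le C_T\|f\|_{L^1_TH^s},
\]
and then estimates the nonlinearity in $L^1_tH^s$ via $\||u|^{2q}u\|_{H^s}\lesssim\|u\|_{L^\infty}^{2q}\|u\|_{H^s}\lesssim\|u\|_{W^{\sigma,p}}^{2q}\|u\|_{H^s}$; H\"older in time then produces the gain $T^{1-2q/r}$, positive exactly when $r>2q$. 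Your proposed $L^{r'}_t W^{\sigma,\cdot}$ route runs into two problems under the stated hypotheses: first, an inhomogeneous estimate with $L^{r'}_t$ on the right is not available from \eqref{modified Strichartz estimate} alone without a $TT^*$ argument that the abstract setting does not provide; second, even granting it, your H\"older split $\tfrac{1}{r'}=\tfrac{2q}{r}+\theta$ gives $\theta=1-\tfrac{2q+1}{r}$, which needs $r>2q+1$, strictly stronger than the assumed $r>\max(2,2q)$. Both issues disappear if you reroute the Duhamel bound through $L^1_tH^s$ as the paper does, so the fix is minor, but as written your exponent bookkeeping does not close.
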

   Before we present the proof of the proposition \ref{local well posedness}, let us establish the following inhomogeneous Strichartz type inequality based on \eqref{modified Strichartz estimate}.
   \begin{lemma}
   Assume \eqref{modified Strichartz estimate}, then we have, for any $f\in L^1([0,T];H^s),$
       \begin{align}\label{Strichartz integral}
\left\| \int_0^t e^{i(t-\tau)\Delta} f(\tau) \, d\tau \right\|_{L^r([0,T], W^{\sigma,p}(M))} 
 \leq C_T \| f \|_{L^1([0,T], H^s(M))}.
\end{align}
   \end{lemma}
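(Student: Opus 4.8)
The plan is to deduce the inhomogeneous bound \eqref{Strichartz integral} from the homogeneous estimate \eqref{modified Strichartz estimate} in two moves: first upgrade the $L^p$-valued estimate to a $W^{\sigma,p}$-valued one, and then convert it into a Duhamel estimate via Minkowski's integral inequality combined with the time-translation invariance of the free flow.

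First I would upgrade \eqref{modified Strichartz estimate} to $W^{\sigma,p}$. Since $\sigma=s-\delta$ and the Fourier multiplier $(1-\Delta)^{\sigma/2}$ commutes with the Schr\"odinger group $e^{it\Delta}$ (both being functions of $-\Delta$), for any $v_0$ one has
\[
\|e^{it\Delta}v_0\|_{L^r(I;W^{\sigma,p})} = \|e^{it\Delta}(1-\Delta)^{\sigma/2}v_0\|_{L^r(I;L^p)} \leq C(I)\|(1-\Delta)^{\sigma/2}v_0\|_{H^\delta} = C(I)\|v_0\|_{H^s},
\]
where in the middle step I applied \eqref{modified Strichartz estimate} to $(1-\Delta)^{\sigma/2}v_0$ and in the last used $\sigma+\delta=s$. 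This provides the homogeneous $W^{\sigma,p}$-estimate that feeds the Duhamel term.

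Next, writing $F(t)=\int_0^t e^{i(t-\tau)\Delta}f(\tau)\,d\tau$, I would bound the spatial norm pointwise in $t$ by $\int_0^t \|e^{i(t-\tau)\Delta}f(\tau)\|_{W^{\sigma,p}}\,d\tau$, extend the $\tau$-integration to $[0,T]$ via the indicator $\mathbf{1}_{\{\tau\leq t\}}$, and apply Minkowski's integral inequality to exchange the $L^r_t$-norm with the $\tau$-integral:
\[
\|F\|_{L^r([0,T];W^{\sigma,p})} \leq \int_0^T \|e^{i(t-\tau)\Delta}f(\tau)\|_{L^r_t([\tau,T];W^{\sigma,p})}\,d\tau.
\]
For each fixed $\tau$, the change of variable $t'=t-\tau$ turns $e^{i(t-\tau)\Delta}f(\tau)$ into $e^{it'\Delta}f(\tau)$ on the interval $[0,T-\tau]$, so the upgraded homogeneous estimate gives $\|e^{it'\Delta}f(\tau)\|_{L^r([0,T-\tau];W^{\sigma,p})} \leq C([0,T-\tau])\|f(\tau)\|_{H^s}$. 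Integrating in $\tau$ yields the claimed bound with $C_T=\sup_{0\leq\tau\leq T}C([0,T-\tau])$.

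The only delicate point—hence the main obstacle—is ensuring the constant $C(I)$ is uniform over the family of subintervals $[0,T-\tau]$, $\tau\in[0,T]$. This is immediate once one knows $C(I)$ is nondecreasing in $|I|$ (shrinking the time-integration domain only decreases the left-hand side of \eqref{modified Strichartz estimate}), so that $C([0,T-\tau])\leq C([0,T])$ and $C_T=C([0,T])$ works. Beyond this, I would simply record the routine justifications: the commutation of $(1-\Delta)^{\sigma/2}$ with $e^{it\Delta}$, the applicability of Minkowski's integral inequality to the nonnegative measurable integrand, and the finiteness of all integrals guaranteed by $f\in L^1([0,T];H^s)$.
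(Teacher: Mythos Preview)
Your proposal is correct and follows essentially the same route as the paper: upgrade \eqref{modified Strichartz estimate} from $L^p$ to $W^{\sigma,p}$ via commutation of $(1-\Delta)^{\sigma/2}$ with the free flow, then Minkowski in $\tau$ followed by the homogeneous estimate at each fixed $\tau$. The only cosmetic difference is in handling the time shift: the paper enlarges the $t$-integration back to the full interval $[0,T]$ and writes $e^{i(t-\tau)\Delta}=e^{it\Delta}e^{-i\tau\Delta}$ together with the $H^s$-isometry of $e^{-i\tau\Delta}$, which keeps the constant equal to $C([0,T])$ directly; you instead translate to $[0,T-\tau]$ and invoke monotonicity of $C(I)$ in $|I|$, arriving at the same $C_T$.
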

 \begin{proof}
     Following \cite{burq2004strichartz}, let us set $F_\tau=\mathbf{1}_{\tau \leq t}e^{i(t-\tau)\Delta} f(\tau)$ and $J=\left\| \int_0^t e^{i(t-\tau)\Delta} f(\tau) \, d\tau \right\|_{L^r([0,T], W^{\sigma,p}(M))}. $\\
     We have that
$$
J\leq \int_0^T \lVert F_\tau\rVert_{L^r([0,T])W^{\sigma,p}(M)}d\tau   \leq \int_0^T\lVert e^{i(t-\tau)\Delta}f(\tau)\rVert_{L^r([0,T];W^{\sigma,p}(M)}d\tau.
$$
    In fact we have $$\left\lVert \int_0^t e^{i(t-\tau)\Delta}f(\tau)d\tau\right\rVert_{W^{\sigma,p}}^r \leq \left(\int_0^t\lVert e^{i(t-\tau)\Delta}f(\tau)\rVert_{W^{\sigma,p}} ~d\tau\right)^r. $$    \\
    Then $$\left(\int_0^T \left\lVert \int_0^t e^{i(t-\tau)\Delta}f(\tau)d\tau\right\rVert_{W^{\sigma,p}}^rdt\right)^{\frac{1}{r}}\leq \left(\int_0^T \left(\int_0^T\lVert e^{i(t-\tau)\Delta}f(\tau)\rVert_{W^{\sigma,p}}d\tau\right)^r dt\right)^\frac{1}{r}.$$
    By using now the Minkowski inequality on the right term, we obtain $$J\leq \int_0^T\lVert e^{i(t-\tau)\Delta}f(\tau)\rVert_{L^r([0,T];W^{\sigma,p}(M)}d\tau.$$
         Now by using the Strichartz estimate (\ref{modified Strichartz estimate}) and the isometry property, we have\\ $\lVert e^{i(t-\tau)\Delta} f(\tau)\rVert_{L^r([0,T];L^p(M)}=\lVert e^{it\Delta} e^{-i\tau\Delta}f(\tau)\rVert_{L^r([0,T];L^p(M)}\leq C(T)\lVert f(\tau)\rVert_{H^\delta}$ and since \\$\sigma=s-\delta$, we see easily that  $\lVert e^{i(t-\tau)\Delta} f(\tau)\rVert_{L^r([0,T];W^{\sigma,p}(M)}\leq C(T)\lVert f(\tau)\rVert_{H^s}$ 
        and we then obtain $$J\leq C_T \int_0^T \lVert f(\tau)\rVert_{H^s}d\tau =C_T\lVert f\rVert_{L^1([0,T], H^s(M))}.$$
         We arrive at the claim.
 \end{proof}  
\begin{proof}[Proof of Proposition \ref{local well posedness}]
We use the Duhamel principle. The contraction argument relies on the use of the crucial bound \eqref{modified Strichartz estimate}.
      Let us fix $u_0 \in B_R(H^s)$, and set the map
      \[
u\mapsto F(u) = e^{it\Delta } P_N u_0 - i \int_0^t e^{i(t - \tau)\Delta} P_N \left( |P_N u|^{2q} P_N u \right) d\tau\,;\quad Y_T^s\to Y_T^s.
\] By standard product inequality, one has 
\[
\lVert F(u) \rVert_{X^s_T} \leq \lVert P_N u_0 \rVert_{H^s} + C_1 \int_0^T \lVert P_N u \rVert_{L^\infty}^{2q} \lVert P_N u \rVert_{H^s} \, d\tau.
\]
Since we assumed $s>\frac{d}{p}+\delta$, we have, for $\sigma=s-\delta$, that $W^{\sigma,p} \hookrightarrow L^{\infty}$. Therefore, we obtain
$$\lVert F(u) \rVert_{X^s_T} \leq \lVert P_N u_0 \rVert_{H^s} + C_2 \lVert P_N u \rVert_{X^s_T}\int_0^T \lVert P_N u \rVert_{W^{\sigma,p}}^{2q} \, d\tau.
$$\\
According to Holder's inequality, we have, by using the fact that $r>2q$,
\[
\begin{array}{cccc}
\lVert F(u) \rVert_{X^s_T} &\leq& \lVert P_N u_0 \rVert_{H^s} + C_2 T^{(1-\frac{2q}{r})} \lVert P_N u \rVert_{X^s_T} \left( \int_0^T \lVert P_N u \rVert_{W^{\sigma,p}}^{r} \, d\tau \right)^{\frac{2q}{r}}\\\\
&\leq& \lVert P_N u_0 \rVert_{H^s} + C_2 T^{(1-\frac{2q}{r})} \lVert P_N u \rVert_{X^s_T} \lVert P_N u \rVert_{L^r_T W^{\sigma,p}}^{2q}\\\\
&\leq& \lVert P_N u_0 \rVert_{H^s} + C_2 T^{(1-\frac{2q}{r})} \lVert P_N u \rVert_{Y^s_T}^{2q+1}.
\end{array}
\]
Therefore
$\lVert F(u) \rVert_{X^s_T}\leq C_3(\lVert u_0 \rVert_{H^s} + T^{\gamma} \lVert u \rVert_{Y^s_T}^{2q+1})
$ where $\gamma=1-\frac{2q}{r}$, $C_3=\max(1,C_2).$\\
So, we have by using \eqref{modified Strichartz estimate} and \eqref{Strichartz integral}, we have 
\begin{align*}
\lVert F(u) \rVert_{L^r_T W^{\sigma,p}}&\leq C_T\lVert P_N u_0 \rVert_{H^s} + \left\lVert  \int_0^t e^{i(t - \tau)\Delta} P_N \left( |P_N u|^{2q} P_N u \right) d\tau \right\rVert_{L^r_T W^{\sigma,p}}\\
&\leq C_T\left(\lVert P_N u_0 \rVert_{H^s}+   \int_0^T {\lVert  \left( |P_N u|^{2q} P_N u \right)} \rVert_{H^s}d\tau \right)\\
&\leq C'_T\left(\lVert u_0 \rVert_{H^s} +  T^{\gamma} \lVert u \rVert_{Y^s_T}^{2q+1}\right).
\end{align*}

Therefore, we arrive at the following estimate
\[\lVert F(u) \rVert_{Y^s_T}\leq C\left(\lVert u_0 \rVert_{H^s} + T^{\gamma} \lVert u \rVert_{Y^s_T}^{2q+1}\right).\]
Since $u_0\in B_R(H^s)$, we remark that if $$u\in B_{2RC}(Y_T^s) ~~\text{and}~~ T^{\gamma}\leq \frac{1}{2^{2q+2}R^{2q}c^{2q+1}}$$ \\where $c=\max(C,C_6)$, with $C_6$ is some absolute constant,
we have that $F(u)\in B_{2RC}(Y_T^s)$. So, the map $u\mapsto F(u)$ transforms $B_{2RC}$ into itself.\\
Now let $u_1$ and $u_2$ be two elements of $B_{2RC}(Y_T^s )$. Repeating the arguments above, we see that
\begin{align*}
\lVert F(u_1) - F(u_2) \rVert_{X^s_T} &\leq \int_0^T \lVert |P_N u_1|^{2q} P_N u_1 - |P_N u_2|^{2q} P_N u_2 \rVert_{H^s} \, d\tau \\
&\leq C_4 \int_0^T \left( \lVert P_N u_1 \rVert_{L^\infty}^{2q} + \lVert P_N u_2 \rVert_{L^\infty}^{2q} \right) \lVert P_N u_1 - P_N u_2 \rVert_{H^s} \, d\tau\\
 &\leq   C_4 \lVert u_1-u_2 \rVert_{Y^s_T}\left(\int_0^T \lVert  u_1 \rVert_{W^{\sigma,p}}^{2q} \, d\tau +\int_0^T \lVert  u_2 \rVert_{W^{\sigma,p}}^{2q} \, d\tau\right)\\
 &\leq   C_4 \lVert u_1-u_2 \rVert_{Y^s_T} T^{\gamma}(\lVert u_1\rVert_{Y^s_T}^{2q}+\lVert u_2\rVert_{Y^s_T}^{2q}).
\end{align*}
On the other hand, by using (\ref{modified Strichartz estimate} - \ref{Strichartz integral}), we have \\
\begin{align*}
\lVert F(u_1) - F(u_2) \rVert_{L_T^rW^{\sigma,p}} &\leq C_T\int_0^T \lVert |P_N u_1|^{2q} P_N u_1 - |P_N u_2|^{2q} P_N u_2 \rVert_{H^s} \, d\tau \\
&\leq   C_5 \lVert u_1-u_2 \rVert_{Y^s_T} T^{\gamma}(\lVert u_1\rVert_{Y^s_T}^{2q}+\lVert u_2\rVert_{Y^s_T}^{2q}).
\end{align*}
Then, $$\lVert F(u_1) - F(u_2) \rVert_{Y^s_T} \leq C_6 \lVert u_1-u_2 \rVert_{Y^s_T} T^{\gamma}(\lVert u_1\rVert_{Y^s_T}^{2q}+\lVert u_2\rVert_{Y^s_T}^{2q}),$$
with $T^\gamma$ definded as above.
Therefore, we have
\[\lVert F(u_1)-F(u_2)\rVert_{Y^s_T}\leq \frac{1}{2}\lVert u_1-u_2\rVert_{Y^s_T}.\]
This finishes the contraction argument.\\
Now to show the last claim, let us observe that, the solution stay in $B_{2RC}(Y^s_T) ~~\forall \abs{t}<T$. \\
Therefore, we have by using the Duhamel's formula,\\
\begin{align*}
    \lVert u_N \rVert_{Y^s_T}&\leq C\left(\lVert P_Nu_0\rVert_{H^s}+T^{\gamma}\lVert u_N \rVert^{2q+1}_{Y^s_T}\right)\\
    &\leq C\left(\lVert P_Nu_0\rVert_{H^s}+\frac{1}{2C}\lVert u_N \rVert_{Y^s_T}\right).
\end{align*}
Therefore, $$\lVert u_N \rVert_{Y^s_T}\leq 2C\lVert P_Nu_0 \rVert_{H^s}.$$ This finishes the proof.
\end{proof}
We remark that the local existence constructed above does not depend on $N$, this is an important property that will be exploited later. Let us now turn to applications.\\ Our first corollary is generic and is based on the Strichartz estimates on compact manifolds, obtained in \cite{burq2004strichartz,blair2012strichartz}.

\begin{corollary}[General compact Riemannian manifold $M^d$]
    \leavevmode\\
    Let $u_0 \in H^s(M^d)$, then the equation (\ref{1}) is locally well-posed for $s>\frac{d}{2}-\frac{1}{2q}=s_{q,d}+\frac{1}{2q}$.
        
    \end{corollary}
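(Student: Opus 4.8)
The plan is to deduce the corollary directly from Proposition \ref{local well posedness} by feeding it the compact-manifold Strichartz estimate \eqref{I_S_C}, with the auxiliary parameters $(r,p,\delta)$ chosen so that the abstract threshold $s>\frac{d}{p}+\delta$ collapses exactly onto $s>\frac{d}{2}-\frac{1}{2q}$. The estimate \eqref{I_S_C} is precisely the hypothesis \eqref{modified Strichartz estimate} of the Proposition, with loss $\delta=\frac{1}{r}$, as long as $(r,p)$ obeys the admissibility relation \eqref{Intro_admissibility} with $p<\infty$. So the only real content is to optimize the choice of admissible pair.

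The key computation is the following. Under admissibility \eqref{Intro_admissibility}, $\frac{2}{r}=\frac{d}{2}-\frac{d}{p}$, hence $\frac{d}{p}=\frac{d}{2}-\frac{2}{r}$, and the abstract threshold becomes
$$\frac{d}{p}+\delta=\frac{d}{p}+\frac{1}{r}=\frac{d}{2}-\frac{1}{r}.$$
Thus for an admissible pair, Proposition \ref{local well posedness} yields LWP whenever $s>\frac{d}{2}-\frac{1}{r}$. Since the Proposition requires $r>\max(2,2q)=2q$ (recall $q\geq 1$ here), the best attainable threshold is $\frac{d}{2}-\frac{1}{2q}$, approached but never reached as $r\downarrow 2q$ --- consistent with the strict inequality in the statement.

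To make this precise, given $s>\frac{d}{2}-\frac{1}{2q}$ I would pick $r>2q$ close enough to $2q$ that $\frac{1}{r}>\frac{d}{2}-s$, i.e.\ $s>\frac{d}{2}-\frac{1}{r}$; such an $r$ exists exactly because $\frac{d}{2}-s<\frac{1}{2q}$. I then define $p$ through \eqref{Intro_admissibility}, namely $\frac{1}{p}=\frac{1}{2}-\frac{2}{dr}$, and set $\delta=\frac{1}{r}$ and $\sigma=s-\delta$. Before invoking the Proposition I would verify that this $p$ is admissible: for $d\geq 3$ and $r>2q\geq 2>\frac{4}{d}$ one gets $\frac{1}{p}\in(0,\frac{1}{2})$, so $p\in(2,\infty)$, in particular $p\geq 1$ and $p<\infty$, and \eqref{I_S_C} genuinely applies. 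With every hypothesis of Proposition \ref{local well posedness} checked, its conclusion delivers the asserted local wellposedness together with the increment bound.

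The only delicate point is the endpoint. The optimization forces $r=2q$ to saturate the stated threshold, whereas the contraction in Proposition \ref{local well posedness} crucially uses $r>2q$ in order to produce the strictly positive power $\gamma=1-\frac{2q}{r}>0$ of $T$ that makes the fixed-point map a contraction for small $T$. This structural obstruction is precisely why the claim carries the strict inequality $s>\frac{d}{2}-\frac{1}{2q}$ rather than the boundary value, and I would not attempt to recover the endpoint by these linear-Strichartz means.
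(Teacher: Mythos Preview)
Your proof is correct and follows essentially the same approach as the paper: both apply Proposition \ref{local well posedness} to the compact-manifold Strichartz estimate \eqref{I_S_C} with $\delta=\frac{1}{r}$ and $r=2q+\epsilon$, then use admissibility to compute $\frac{d}{p}+\delta=\frac{d}{2}-\frac{1}{r}\to\frac{d}{2}-\frac{1}{2q}$. Your version is in fact more carefully written, as you explicitly verify that the resulting $p$ lies in the allowed range and explain why the endpoint $r=2q$ is excluded.
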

\begin{proof}
    Let us recall the Strichartz estimate with the loss of $\frac{1}{r}$ as seen in \eqref{I_S_C}. We have for all numbers $r,p$ with $r\geq 2$, $p<\infty$, and satisfying the scaling condition $\frac{2}{r}+\frac{d}{p}=\frac{d}{2}$ :
     \begin{align}
       \lVert e^{it\Delta}P_Nu_0 \rVert_{L^r_t([-T,T];L^p(M^d))}\leq C(T) \lVert P_Nu_0\rVert_{H^\frac{1}{r}}   
    \end{align}
    Therefore by taking $r=2q+\epsilon$, since $q\geq 1$, then $r>\max(2,2q)$, we will have according to the Proposition (\ref{local well posedness}), the local well-posed for $s>\frac{d}{p}+\frac{1}{r}=\frac{d}{2}-\frac{2}{r}+\frac{1}{r}=\frac{d}{2}-\frac{1}{2q}=s_{q,d}+\frac{1}{2q}.$
\end{proof}
The following two corollaries concern the specific geometries of the torus and Zoll manifolds, where  the Schr\"odinger propagator enjoys bilinear properties that enabled to improve the generic Strichartz estimate used in the above result (See \cite{bourgain1993fourier,burq2004strichartz}). This allows to cover the regularities all the way to the critical threshold.
\begin{corollary}[Case of the Torus $\mathbb{T}^d, d\geq 3$]
\leavevmode\\
Let $u_0 \in H^s(\T^d)$, then the equation (\ref{1}) is locally well-posed for $s>s_{q,d}=\frac{d}{2}-\frac{1}{q}$ with $q\geq 1+\frac{4}{2d}$.
    
\end{corollary}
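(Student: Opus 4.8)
The plan is to mimic the proof of the general compact manifold corollary, but to replace the lossy estimate \eqref{I_S_C} by the sharp periodic (diagonal) Strichartz estimate on the torus due to Bourgain \cite{bourgain1993fourier} (in the form later completed via $\ell^2$-decoupling). Concretely, for $p>\frac{2(d+2)}{d}$ and any finite interval $I$, one has the frequency-localized bound
\begin{equation*}
\lVert e^{it\Delta}P_N u_0\rVert_{L^p(I;L^p(\T^d))}\leq C(I)\,\lVert P_N u_0\rVert_{H^{\frac{d}{2}-\frac{d+2}{p}+\epsilon}},
\end{equation*}
which, after a dyadic Littlewood--Paley summation in frequency that costs only an arbitrarily small loss (absorbed by enlarging the regularity index by $\epsilon$), upgrades to an unlocalized diagonal estimate of the form \eqref{modified Strichartz estimate} with $r=p$ and $\delta=\frac{d}{2}-\frac{d+2}{p}+\epsilon$.

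First I would choose the diagonal exponent $r=p=2q+\epsilon$. Since $q\geq 1+\frac{2}{d}$, we have $2q\geq 2+\frac{4}{d}=\frac{2(d+2)}{d}$, so $p=2q+\epsilon$ lies in the admissible range $p>\frac{2(d+2)}{d}$ of the periodic Strichartz estimate; moreover $r=2q+\epsilon>\max(2,2q)$, as required by Proposition \ref{local well posedness}. With $\delta=\frac{d}{2}-\frac{d+2}{p}+\epsilon$, the regularity threshold of the proposition becomes
\begin{equation*}
\frac{d}{p}+\delta=\frac{d}{2}-\frac{2}{p}+\epsilon=\frac{d}{2}-\frac{2}{2q+\epsilon}+\epsilon,
\end{equation*}
which tends to $\frac{d}{2}-\frac{1}{q}=s_{q,d}$ as $\epsilon\to 0$. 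Hence for any $s>s_{q,d}$ I can fix $\epsilon>0$ small enough that $s>\frac{d}{p}+\delta$, and Proposition \ref{local well posedness} then yields local wellposedness of \eqref{1} in $Y_T^s$, uniformly in $N$.

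The deep analytic input, the sharp periodic Strichartz estimate, is taken as given (the proposition is conditional on an estimate of type \eqref{modified Strichartz estimate}), so the only genuine work here is bookkeeping: verifying that the diagonal choice $r=p=2q+\epsilon$ simultaneously (i) falls in the decoupling range $p>\frac{2(d+2)}{d}$, which is exactly the source of the constraint $q\geq 1+\frac{2}{d}$, and (ii) drives the threshold $\frac{d}{p}+\delta$ down to $s_{q,d}$. The one point requiring a little care is the passage from the frequency-localized estimate to the $H^\delta$-estimate on general data: the $N^{\epsilon}$ loss from decoupling must be summed against the dyadic pieces, which is harmless precisely because we only claim the \emph{strict} inequality $s>s_{q,d}$, leaving an $\epsilon$ of room to absorb the loss.
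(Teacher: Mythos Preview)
Your proof is correct and follows essentially the same route as the paper: invoke the diagonal periodic Strichartz estimate $\|e^{it\Delta}P_Nu_0\|_{L^r(\T^{d+1})}\lesssim N^{\frac{d}{2}-\frac{d+2}{r}+\epsilon}\|P_Nu_0\|_{L^2}$ from \cite{bourgain1993fourier,bourgain2015proof}, pass to an $H^\delta$ estimate via Littlewood--Paley, take $r=p=2q+\epsilon$ (which forces $q\geq 1+\tfrac{2}{d}$ for admissibility), and read off the threshold $\frac{d}{p}+\delta\to s_{q,d}$ before applying Proposition~\ref{local well posedness}. Your treatment of the $\epsilon$-losses is in fact slightly more careful than the paper's.
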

\begin{proof}
\leavevmode
    Let us recall first the periodic Strichartz inequalities, we refer to \cite{bourgain1993fourier,bourgain2015proof}
    \begin{equation}{\label{periodic Strichartz}}
        \lVert e^{it\Delta}P_Nu_0\rVert_{L^r(\mathbb{T}^{d+1})}\lesssim N^{\frac{d}{2}-\frac{d+2}{r}+\epsilon}\lVert P_Nu_0\rVert_{L^2} 
    \ \ \ \text{for} \ r\geq\frac{2(d+2)}{d}
    \end{equation}
    Now by using the  Littlewood-Paley inequality, we will have 
    \begin{equation}{\label{periodic Strichartz2}}
        \lVert e^{it\Delta}u_0\rVert_{L^r(\mathbb{T}^{d+1})}\lesssim \lVert u_0\rVert_{H^{\frac{d}{2}-\frac{d+2}{r}+\epsilon}} 
    \ \ \ \text{for} \ r\geq\frac{2(d+2)}{d}
    \end{equation}
    Therefore we can see that by taking $r=2q+\epsilon$, we will have for $q\geq \frac{2(d+2)}{2d}=1+\frac{2}{d}$; \\$r>\max(2,2q)$ and $r\geq\frac{2(d+2)}{d}$; therefore, according to the estimate \eqref{periodic Strichartz2} and Proposition \ref{local well posedness}, we obtain local well-posedness for $$s>\frac{d}{r}+\frac{d}{2}-\frac{d}{r}-\frac{2}{2q}=\frac{d}{2}-\frac{1}{q}=s_{q,d}.$$
    
\end{proof}
\begin{corollary}[Case of Zoll manifold $M^d$ of dimension $d\geq 3 $]
\leavevmode\\
Let $u_0 \in H^s(M^d)$, where $M^d$ stands for a Zoll manifold, then the equation (\ref{1}) is locally well-posed for $s>s_{q,d}=\frac{d}{2}-\frac{1}{q}$ with $q\geq 2$.
\end{corollary}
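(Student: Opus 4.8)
The plan is to follow the template set by the two preceding corollaries: produce a space-time Strichartz estimate of exactly the shape demanded by Proposition \ref{local well posedness}, then tune the exponents so that the resulting threshold $\frac{d}{p}+\delta$ collapses precisely onto $s_{q,d}$. The only genuinely new ingredient is the dispersive estimate valid on a Zoll manifold, which I would import from Burq--G\'erard--Tzvetkov \cite{burq2004strichartz}; everything else is bookkeeping identical to the torus case.

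First I would recall the mechanism behind the Zoll estimate. On a Zoll manifold all geodesics are closed with a common period, which forces the eigenvalues of $\sqrt{-\Delta}$ to cluster near an arithmetic progression $k+\beta$; consequently the propagator $e^{it\Delta}$ is, up to a controlled remainder, periodic in $t$, so a periodic (torus-type) Strichartz estimate can be transferred to this geometry. The outcome, for the diagonal space-time norm $p=r$, is
\[
\|e^{it\Delta}P_N u_0\|_{L^r([0,1]\times M^d)} \lesssim N^{\frac{d}{2}-\frac{d+2}{r}+\epsilon}\,\|P_N u_0\|_{L^2}, \qquad r\geq 4,
\]
which carries the \emph{same} loss exponent $\frac{d}{2}-\frac{d+2}{r}$ as the periodic estimate \eqref{periodic Strichartz}; the difference is only the admissible range, since the clustering argument delivers the bound for $r\geq 4$ rather than for $r\geq\frac{2(d+2)}{d}$ as on the torus. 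A Littlewood--Paley summation then removes the frequency cutoff and yields, for $r\geq 4$,
\[
\|e^{it\Delta} u_0\|_{L^r([0,1]\times M^d)} \lesssim \|u_0\|_{H^{\frac{d}{2}-\frac{d+2}{r}+\epsilon}}.
\]

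Next I would feed this into Proposition \ref{local well posedness} by choosing $r=2q+\epsilon$ and $p=r$, with $\delta=\frac{d}{2}-\frac{d+2}{r}+\epsilon$ and $\sigma=s-\delta$. Because $q\geq 2$, we simultaneously get $r\geq 4$ (so the Zoll estimate applies) and $r>\max(2,2q)$ (so the hypotheses of the proposition hold), and the proposition then grants local wellposedness for every $s>\frac{d}{p}+\delta$. A direct computation gives
\[
\frac{d}{p}+\delta=\frac{d}{r}+\frac{d}{2}-\frac{d+2}{r}+\epsilon=\frac{d}{2}-\frac{2}{r}+\epsilon=\frac{d}{2}-\frac{1}{q}+\epsilon,
\]
which is $s_{q,d}+\epsilon$; letting $\epsilon\downarrow 0$ recovers the claimed range $s>s_{q,d}$.

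The main obstacle is not in this paper's own argument---which is the same arithmetic as the torus corollary---but in correctly invoking the borrowed estimate and, above all, its range of validity. The constraint $q\geq 2$ is dictated exactly by the threshold $r\geq 4$, below which the periodicity/clustering transfer on Zoll manifolds ceases to produce a favorable loss; the underlying $L^4$ bilinear bound is precisely what separates Zoll manifolds from generic compact manifolds and is the step that cannot be pushed further within this method. I would therefore be careful to cite the exact statement in \cite{burq2004strichartz} and to verify that the admissibility region $r\geq 4$ matches the power condition $q\geq 2$, as this compatibility is the sole point where the geometry of the manifold enters the proof.
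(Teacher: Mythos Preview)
Your argument is correct and matches the paper's approach almost line for line; the arithmetic collapsing $\frac{d}{p}+\delta$ to $s_{q,d}$ is identical. The one difference is that the paper does not cite an $L^r$ Strichartz bound on Zoll manifolds directly for all $r\geq 4$: it starts from the $L^4$ estimate
\[
\|e^{it\Delta}P_N u_0\|_{L^4(M^d\times[-T,T])}\lesssim N^{\frac{d}{4}-\frac{1}{2}+\epsilon}\|P_N u_0\|_{L^2}
\]
(citing \cite{burq2003cauchy}) and reaches the $L^r$ case for $r>4$ by Riesz--Thorin interpolation against the trivial endpoint $\|e^{it\Delta}P_N u_0\|_{L^\infty_{t,x}}\lesssim N^{\frac{d}{2}+}\|P_N u_0\|_{L^2}$, with $\theta=4/r$. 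This produces precisely your exponent $\frac{d}{2}-\frac{d+2}{r}$, so there is no gap; the paper's version is simply more explicit about the fact that only the $L^4$ bound is what is directly established for Zoll manifolds, and the higher-$r$ estimates you invoke are obtained through this interpolation rather than being available as a black box.
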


    \begin{proof}
    Let us recall the Strichartz estimate on the Zoll manifold of dimension $d\geq 3$ (obtained in \cite{burq2003cauchy}):\\
    \begin{align}
      \lVert e^{it\Delta}P_Nu_0 \rVert_{L^4({M}^d\times[-T,T])}\lesssim N^{\frac{d}{4}-\frac{1}{2}+\epsilon}\lVert P_Nu_0\rVert_{L^2}.  
    \end{align}
    We have also by using the $L^2$-norm conservation and the fact that $H^{\frac{d}{2}+}\hookrightarrow L^{\infty}$\\
    \begin{align}
       \lVert e^{it\Delta}P_Nu_0 \rVert_{L^{\infty}({M}^d\times[-T,T])}\lesssim N^{\frac{d}{2}+}\lVert P_Nu_0\rVert_{L^2}.   
    \end{align}
    Now using again the Riesz-Thorin interpolation by taking $\theta=\frac{4}{r}$ with $r> 4$, we have\\
    \begin{align}\label{Strichartz4}
       \lVert e^{it\Delta}P_Nu_0 \rVert_{L^r({M}^d\times[-T,T])}\lesssim N^{\frac{d}{2}(1-\frac{4}{r})+\frac{4}{r}(\frac{d}{4}-\frac{1}{2})+}\lVert P_Nu_0\rVert_{L^2}.  
    \end{align}
    Therefore by taking $r=2q+\epsilon$, for $q\geq 2$, then $r> 4$, so we have according the estimate (\ref{Strichartz4}) and the Proposition \ref{local well posedness}, the local well-posed for $s>\frac{d}{r}+\frac{d}{2}-\frac{2d}{r}+\frac{d}{r}-\frac{2}{2q}=\frac{d}{2}-\frac{1}{q}=s_{q,d}$.
    \end{proof}
\begin{remark}
Notice that for $N$ finite, the local solutions of \eqref{1} can be extended globally in time due to preservation of the $L^2$-norm (and the equivalence of norms).
    Let us define the associated global flow by
\begin{align*}
\phi_N^t : E_N &\to E_N\\
P_Nu_0 &\mapsto \phi_N^t P_Nu_0
\end{align*}
where \(\phi_N^t(P_Nu_0) := u(t,u_0)\) represents the solution to (\ref{1}) starting at \(u_0\). We also  set the corresponding Markov groups on bounded functionals
\[
\phi_N^t f(v) = f(\phi_N^t(P_N v)); \quad \quad C_b(L^2) \to C_b(L^2),
\]
and on probability measures
\[
\phi_N^{t*} \lambda(\Gamma) = \lambda(\phi_N^{-t}(\Gamma)); \quad\quad P(L^2) \to P(L^2).
\]
\end{remark}    

\subsection{Deterministic devices for the globalization}
Here we prove two results that play key role in the globalization procedure. The convergence lemma locally compares the NLS with its Galerkin projections. The globalization lemma is rather a general result that exploits uniform global well-posedness of Galerkin projections together with the convergence lemma in order to deduce GWP of the NLS. These two devices heavily rely on the LWP theory established previously and critically make use of the uniformity property of the time of increment $T$.
\begin{lemma}[Convergence lemma]\label{local uniform}
     Let $r,p, \delta$ with $r>\max(2,2q)$ be defined as in Proposition \ref{local well posedness}. Let $s>\frac{d}{p}+\delta$.~~Let $u_0\in H^s$ and  $(u_{0,N})_{N\in \mathbb{N}}$ a sequence such that~~$u_{0,N}\to u_0 ~~\text{in}~~ H^s$ with $u_{0,N}\in E_N$ .~~Let $R > 0,B_R := B_R(H^s)~~\text{and}~~T:=T(R,s)$ be the associated uniform existence time for the problem (\ref{1}) with initial data $u_{0,N}$; we have $$\forall \quad\frac{d}{p}+\delta<s'<s, \sup_{u_0\in B_R}\lVert \phi^tu_0-\phi^t_N(u_{0,N})\rVert_{Y^{s'}_T}\to 0 ~~~~\text{as}~~~~ N\to \infty$$
\end{lemma}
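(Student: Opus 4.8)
The plan is to prove the convergence lemma by estimating the difference $w_N := \phi^t u_0 - \phi^t_N(u_{0,N})$ in the norm $\|\cdot\|_{Y^{s'}_T}$ for any intermediate regularity $\frac{d}{p}+\delta < s' < s$, using the Duhamel formulation together with the a priori uniform bounds in $Y^s_T$ furnished by Proposition \ref{local well posedness}. The key structural point is that both $\phi^t u_0$ (the case $N=\infty$) and $\phi^t_N(u_{0,N})$ solve integral equations of the same type, so their difference obeys a closed Duhamel identity, and the contraction estimates already developed in the proof of Proposition \ref{local well posedness} can be reused at the \emph{lower} regularity $s'$.

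**Key steps.**
First I would write both flows via Duhamel. Setting $u := \phi^t u_0$ and $u_N := \phi^t_N(u_{0,N})$, we have
\begin{align*}
w_N(t) = e^{it\Delta}(u_0 - u_{0,N})
&+ i\int_0^t e^{i(t-\tau)\Delta}\Big(P_N\big(|P_Nu_N|^{2q}P_Nu_N\big) \\
&\qquad - |u|^{2q}u\Big)\,d\tau,
\end{align*}
where I have implicitly used $P_\infty = \mathrm{Id}$ for the limiting flow. Next I would split the nonlinear difference into two pieces: a \emph{genuine difference} of the form $|u|^{2q}u - |u_N|^{2q}u_N$, to which the Lipschitz-type estimate established in the contraction argument of Proposition \ref{local well posedness} applies (yielding a factor $T^\gamma(\|u\|_{Y^{s}_T}^{2q}+\|u_N\|_{Y^{s}_T}^{2q})$ times $\|w_N\|_{Y^{s'}_T}$ at the lower regularity $s'$), and a \emph{projection error} $(\mathrm{Id}-P_N)(|u_N|^{2q}u_N)$ coming from the frequency truncation. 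For the linear term I would invoke estimate \eqref{modified Strichartz estimate} to bound $\|e^{it\Delta}(u_0-u_{0,N})\|_{Y^{s'}_T}$ by $C\|u_0-u_{0,N}\|_{H^{s'}}$, which tends to $0$ by hypothesis. Then, choosing $T$ small enough (the same uniform $T$ from Proposition \ref{local well posedness}, since the a priori bounds $\|u\|_{Y^s_T},\|u_N\|_{Y^s_T}\le 2CR$ hold uniformly in $N$ on $B_R$), the Lipschitz contribution is absorbed into the left-hand side, leaving
\[
\|w_N\|_{Y^{s'}_T} \;\lesssim\; \|u_0-u_{0,N}\|_{H^{s'}} \;+\; \big\|(\mathrm{Id}-P_N)(|u_N|^{2q}u_N)\big\|_{L^1_T H^{s'}}.
\]

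**Main obstacle.**
The main difficulty is controlling the projection error term $(\mathrm{Id}-P_N)(|u_N|^{2q}u_N)$ and showing it vanishes \emph{uniformly over} $u_0\in B_R$ as $N\to\infty$. The gain of the lower regularity $s'<s$ is precisely what makes this tractable: since $|u_N|^{2q}u_N$ is bounded in $L^1_T H^{s}$ uniformly in $N$ (by the product estimates and the uniform $Y^s_T$ bound), and $(\mathrm{Id}-P_N)$ maps $H^s$ into $H^{s'}$ with operator norm $\lesssim (1+\lambda_N)^{-(s-s')/2}\to 0$, one obtains a quantitative decay $(1+\lambda_N)^{-(s-s')/2}$ that is independent of the particular data in $B_R$. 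Handling the uniformity of the supremum over $B_R$ requires care, but follows once all the implied constants are traced to depend only on $R$, $s$, $s'$ and the fixed $T$. I would close the argument by combining the vanishing of both right-hand terms, giving $\sup_{u_0\in B_R}\|w_N\|_{Y^{s'}_T}\to 0$.
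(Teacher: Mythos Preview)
Your proposal is correct and follows the same overall architecture as the paper's proof: write the Duhamel difference, split the nonlinear term into a Lipschitz piece and a projection-error piece, and exploit the gap $s'<s$ to make $(1-P_N)$ gain a factor $(1+\lambda_N)^{(s'-s)/2}$. The two proofs differ only in minor technical choices. First, the paper places the projection error on the full solution, writing $(1-P_N)(|\phi^\tau u_0|^{2q}\phi^\tau u_0)$, whereas you put it on the approximate solution $(1-P_N)(|u_N|^{2q}u_N)$; both are bounded in $L^1_TH^s$ uniformly by the $Y^s_T$ control, so either decomposition works. Second, and more substantively, the paper closes the estimate by first bounding $\|\phi^t u_0-\phi^t_N u_{0,N}\|_{H^{s'}}$ via Gronwall's lemma (with integrating factor $\exp\big(C\int_0^t(\|\phi^\tau u_0\|_{W^{\sigma,p}}^{2q}+\|\phi^\tau_N u_{0,N}\|_{W^{\sigma,p}}^{2q})\,d\tau\big)$) and then bootstraps to the $L^r_TW^{\sigma',p}$ component using Strichartz, while you absorb the Lipschitz term directly in the $Y^{s'}_T$ norm using the smallness of $T^\gamma(\|u\|_{Y^s_T}^{2q}+\|u_N\|_{Y^s_T}^{2q})$. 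Your absorption route is arguably cleaner and avoids the two-step structure; the only caveat is that the product/embedding constants at regularity $s'$ need not coincide with those at $s$, so strictly speaking you may need to shrink $T$ to a value depending on both $s$ and $s'$---a harmless adjustment since the statement is local. The paper's Gronwall argument sidesteps this at the cost of a slightly longer computation.
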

\begin{proof}
    Let us write down the Duhamel Formulas,
        \[
\phi^tu_0 = S(t)u_0 - i \int_0^t S(t-\tau) \left( |\phi^{\tau}u_0|^{2q} \phi^{\tau}u_0 \right) d\tau,
\] 
    \[
\phi^t_Nu_{0,N} = S(t)u_{0,N} - i \int_0^t S(t-\tau) P_N \left( |\phi^{\tau}_Nu_{0,N}|^{2q} \phi^{\tau}_N u_{0,N} \right) d\tau.
\] We take the difference
    \begin{align*} 
    \phi^tu_0-\phi^t_Nu_{0,N}=S(t)(u_0-u_{0,N})&- i \int_0^t S(t-\tau)\ P_N \left( |\phi^{\tau}u_0|^{2q} \phi^{\tau}u_0 -|\phi^{\tau}_N u_{0,N}|^{2q} \phi^{\tau}_Nu_{0,N} \right)d\tau \\
    &- i \int_0^t S(t-\tau) \left( |\phi^{\tau}u_0|^{2q} \phi^{\tau}u_0 -P_N(|\phi^{\tau}u_0|^{2q} \phi^{\tau} u_0)\right)d\tau.
    \end{align*}
    We have
    \begin{align*}
        \lVert \phi^tu_0-\phi^t_N u_{0,N} \rVert_{H^{s'}}\leq \lVert u_0-u_{0,N}\rVert_{H^{s'}}&+C_2\int_0^t\left(\lVert \phi^{\tau}u_0\rVert_{L^{\infty}}^{2q}+\lVert \phi^{\tau}_N u_{0,N}\rVert_{L^{\infty}}^{2q} \right)\left(\lVert \phi^{\tau}u_0-\phi^{\tau}_N u_{0,N}\rVert_{H^{s'}} \right)d\tau\\
        &+\int_0^t\lVert (1-P_N)(\abs{\phi^{\tau}u_0}^{2q}\phi^{\tau}u_0)\rVert_{H^{s'}}d\tau.
    \end{align*}
    Now, \[\lVert (1-P_N)f\rVert_{H^{s'}}\leq (1+\lambda_N)^{\frac{s'-s}{2}}\lVert(1-P_N)f\rVert_{H^s}\leq (1+\lambda_N)^{\frac{s'-s}{2}}\lVert f\rVert_{H^{s}}\] and $W^{\sigma,p} \hookrightarrow L^{\infty}$; so that we obtain
    \begin{align*}
    \lVert \phi^t u_0 - \phi^t_N u_{0,N} \rVert_{H^{s'}} 
    &\leq \lVert u_0-u_{0,N}\rVert_{H^{s}}+(1 + \lambda_N)^{\frac{s' - s}{2}} \int_0^t \lVert \phi^{\tau} u_0 \rVert_{W^{\sigma, q}}^{2q} 
    \lVert \phi^{\tau} u_0 \rVert_{H^s} \, d\tau  \\
    &\quad + C_2 \int_0^t \left( \lVert \phi^{\tau} u_0 \rVert_{W^{\sigma, p}}^{2q} 
    + \lVert \phi^{\tau}_N u_{0,N} \rVert_{W^{\sigma, p}}^{2q} \right) 
    \lVert \phi^{\tau} u_0 - \phi^{\tau}_N  u_{0,N} \rVert_{H^{s'}} \, d\tau.
    \end{align*}
  By applying the Gronwall lemma, we have 
  \begin{align*}
    &\lVert \phi^t u_0 - \phi^t_N u_{0,N} \rVert_{H^{s'}} \leq \left(\lVert u_0-u_{0,N}\rVert_{H^{s}}+  (1 + \lambda_N)^{\frac{s' - s}{2}}  
    \int_0^t \lVert \phi^{\tau} u_0 \rVert_{W^{\sigma, p}}^{2q} 
    \lVert \phi^{\tau} u_0 \rVert_{H^s}  \, d\tau \right) +\\
    &\int_0^t\left( \lVert u_0-u_{0,N} \rVert_{H^{s}} 
     +(1 + \lambda_N)^{\frac{s' - s}{2}}\int_0^{\tau} \lVert \phi^{\epsilon} u_0 \rVert_{W^{\sigma, q}}^{2q} 
    \lVert \phi^{\epsilon} u_0 \rVert_{H^s} \,  \right)C_2 \left( \lVert \phi^{\tau} u_0 \rVert_{W^{\sigma, q}}^{2q} 
    + \lVert \phi^{\tau}_N u_{0,N} \rVert_{W^{\sigma, q}}^{2q} \right)e^{\int_{\tau}^{t} f({\epsilon})d\epsilon}d\tau,
    \end{align*}
    where $f(\epsilon)=C_2\left(\lVert \phi^{\epsilon} u_0 \rVert_{W^{\sigma, q}}^{2q} 
    + \lVert \phi^{\epsilon}_N u_{0,N} \rVert_{W^{\sigma, p}}^{2q} \right)$.\\\\
    Now, let us remark that on $[0,T_R)$, we have $\lVert \phi^{\tau}u_0\rVert_{H^s}$;~~$\lVert \phi^{\tau}_Nu_{0,N}\rVert_{H^s} \leq C(R)$ and \\\\
    $\int_{\tau_1}^{\tau_2}\lVert \phi^{\tau} u_0 \rVert_{W^{\sigma, p}}^{2q}d\tau\leq T^{\gamma}\lVert\phi^{\tau}u_0\rVert^{2q}_{L^r_TW^{\sigma,p}}$;~~$\int_{\tau_1}^{\tau_2}\lVert \phi^{\tau}_N u_{0,N} \rVert_{W^{\sigma, p}}^{2q}d\tau\leq T^{\gamma}\lVert\phi^{\tau}_Nu_{0,N}\rVert^{2q}_{L^r_TW^{\sigma,p}} \leq C_1(R).$\\ \\
  Therefore, we arrive at  $$\sup_{u_0\in B_R}\lVert \phi^tu_0-\phi^t_N(u_{0,N})\rVert_{X^{s'}_T}\to 0 ~~~~\text{as}~~~~ N\to \infty.$$\\
    On the other hand, we have by using Strichartz estimate,
    \begin{align*}
        \lVert \phi^tu_0-\phi^t_Nu_{0,N} &\rVert_{L^r_TW^{\sigma',p}}\leq C_T\left(\lVert u_0-u_{0,N}\rVert_{H^{s'}}+\int_0^T\lVert (1-P_N)(\abs{\phi^{\tau}u_0}^{2q}\phi^{\tau}u_0)\rVert_{H^{s'}}d\tau\right)\\
        &+C_TC_2\int_0^T\left(\lVert \phi^{\tau}u_0\rVert_{L^{\infty}}^{2q}+\lVert \phi^{\tau}_N u_{0,N}\rVert_{L^{\infty}}^{2q} \right)\left(\lVert \phi^{\tau}u_0-\phi^{\tau}_Nu_{0,N}\rVert_{H^{s'}} \right)d\tau\\\\
        &\leq C_T\left(\lVert u_0-u_{0,N}\rVert_{H^{s}}+(1 + \lambda_N)^{\frac{s' - s}{2}}\left(C(R)\right )+C_3(R)\lVert \phi^t u_0-\phi^t_N(u_{0,N})\rVert_{X^{s'}_T}\right).
   \end{align*}
   So, $$\sup_{u_0\in B_R}\lVert \phi^tu_0-\phi^t_Nu_{0,N} \rVert_{L^r_TW^{\sigma',p}} \to 0 ~~\text{as}~N\to \infty.$$
   Overall, 
   $$\forall \quad\frac{d}{p}+\delta<s'<s,\sup_{u_0\in B_R}\lVert \phi^tu_0-\phi^t_N(u_{0,N})\rVert_{Y^{s'}_T}\to 0 ~~~~\text{as}~~~~ N\to \infty,$$
   which concludes the proof.
    \end{proof}

    \begin{lemma}[A sufficient condition for global existence in $H^s$]\label{lemma_sufficient_cond} Let $u_0 \in H^s$, Let $r,p, \delta$ with $r>\max(2,2q)$ be defined as in the above lemma. Let $s,s'$ such that $\frac{d}{p}+\delta<s'<s.$ Assume that $\phi^tu_0\in Y^{s'}_T$ for all $T>0$, then $\phi^tu_0$ is global (and belongs to $Y^s_T$ for all $T$). 
    \end{lemma}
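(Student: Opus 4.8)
The plan is to run a persistence-of-regularity argument: I will show that the $H^s$-norm of $\phi^t u_0$ obeys a Gronwall inequality whose entire nonlinear factor is controlled at the lower level $s'$, so that it stays finite on every bounded time interval, and then combine this with the continuation alternative coming from Proposition \ref{local well posedness} to force globality. First I would invoke the local theory to fix the maximal $H^s$-existence time $T^\ast=T^\ast(u_0)\in(0,+\infty]$ and record the continuation criterion: since the existence time $T(R,s)$ furnished by Proposition \ref{local well posedness} depends only on the $H^s$-size $R$, if $T^\ast<\infty$ then necessarily $\limsup_{t\uparrow T^\ast}\lVert\phi^t u_0\rVert_{H^s}=+\infty$. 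Because $Y^s_T\subset Y^{s'}_T$ and $Y^{s'}$-solutions are unique, the $H^s$-solution coincides on $(-T^\ast,T^\ast)$ with the global $Y^{s'}$-solution supplied by the hypothesis, so the $Y^{s'}_T$-bounds are available throughout the argument below.

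The core estimate comes from the Duhamel formula $\phi^t u_0 = S(t)u_0 - i\int_0^t S(t-\tau)\big(|\phi^\tau u_0|^{2q}\phi^\tau u_0\big)\,d\tau$. Since $S(t)$ is an isometry on $H^s$ and the same product (Moser) inequality used in the proof of Proposition \ref{local well posedness} gives $\lVert |w|^{2q}w\rVert_{H^s}\lesssim \lVert w\rVert_{L^\infty}^{2q}\lVert w\rVert_{H^s}$, I obtain for $0\le t<T^\ast$,
\[
\lVert \phi^t u_0 \rVert_{H^s} \leq \lVert u_0 \rVert_{H^s} + C \int_0^t \lVert \phi^\tau u_0 \rVert_{L^\infty}^{2q}\, \lVert \phi^\tau u_0 \rVert_{H^s}\, d\tau .
\]
The decisive structural point is that only a single, linear copy of the top norm $\lVert\phi^\tau u_0\rVert_{H^s}$ appears, while the whole power $2q$ sits on $\lVert\phi^\tau u_0\rVert_{L^\infty}$. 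Since $s'>\frac{d}{p}+\delta$ we have $\sigma':=s'-\delta>\frac{d}{p}$, hence $W^{\sigma',p}\hookrightarrow L^\infty$, and by Hölder in time with exponent $r/2q>1$ (using $r>2q$),
\[
\int_0^t \lVert \phi^\tau u_0 \rVert_{L^\infty}^{2q}\, d\tau \lesssim \int_0^t \lVert \phi^\tau u_0 \rVert_{W^{\sigma', p}}^{2q}\, d\tau \leq t^{\gamma}\, \lVert \phi^\tau u_0 \rVert_{L^r_t W^{\sigma', p}}^{2q}, \qquad \gamma = 1 - \tfrac{2q}{r}.
\]

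Applying Gronwall's lemma to the first display, I get on any bounded interval $[0,T]$ with $T<T^\ast$,
\[
\sup_{t \in [0,T]} \lVert \phi^t u_0 \rVert_{H^s} \leq \lVert u_0 \rVert_{H^s}\, \exp\!\Big( C\, T^{\gamma}\, \lVert \phi^\tau u_0 \rVert_{L^r_T W^{\sigma', p}}^{2q} \Big).
\]
By hypothesis $\phi^t u_0\in Y^{s'}_T$ for \emph{every} finite $T$, so the exponent is finite for each such $T$, including $T=T^\ast$ were it finite. If $T^\ast<\infty$ the right-hand side would therefore remain bounded as $t\uparrow T^\ast$, contradicting the continuation criterion; hence $T^\ast=+\infty$ and $\phi^t u_0$ is global in $H^s$. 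Feeding this finite $H^s$-bound back into the inhomogeneous Strichartz inequality \eqref{Strichartz integral}, exactly as in the proof of Proposition \ref{local well posedness}, controls $\lVert\phi^t u_0\rVert_{L^r_T W^{\sigma,p}}$ on each bounded interval, giving $\phi^t u_0\in Y^s_T$ for all $T$; the backward direction is identical by the time-reversibility of the estimates.

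I expect the only genuinely delicate point to be the persistence structure itself — guaranteeing that the top norm $\lVert\phi^\tau u_0\rVert_{H^s}$ enters the nonlinear estimate only linearly while the full power $2q$ is absorbed by the $s'$-level $L^\infty$ (equivalently $W^{\sigma',p}$) norm. This is precisely where the hypothesis $s'>\frac{d}{p}+\delta$ is used, through the embedding $W^{\sigma',p}\hookrightarrow L^\infty$ at level $s'$, making the nonlinear factor globally finite. Everything else (the isometry of $S(t)$, the Moser product inequality, Hölder, Gronwall, and the continuation alternative) is routine and already built into Proposition \ref{local well posedness}.
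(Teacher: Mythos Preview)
Your proposal is correct and follows essentially the same approach as the paper: Duhamel plus the Moser product inequality to isolate a single linear copy of $\lVert\phi^\tau u_0\rVert_{H^s}$, the embedding $W^{\sigma',p}\hookrightarrow L^\infty$ at level $s'$, H\"older in time, and Gronwall, followed by a pass through the inhomogeneous Strichartz estimate to recover the full $Y^s_T$ control. The only difference is cosmetic: you make the continuation criterion via $T^\ast$ explicit, whereas the paper simply records the resulting a priori bound and reads off globality directly.
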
 
\begin{proof}
  From the Duhamel formula, we write
    \begin{align*}
\lVert \phi^tu_0 \rVert_{H^s} \leq \lVert u_0 \rVert_{H^s} + &C_1\int_0^t \lVert \phi^{\tau}u_0 \rVert_{L^{\infty}}^{2q} \lVert \phi^{\tau}u_0 \rVert_{H^s}  \, d\tau\\
 &\leq \lVert u_0 \rVert_{H^s} + C \int_0^t \lVert \phi^{\tau}u_0 \rVert^{2q}_{W^{\sigma',p}} \lVert \phi^{\tau}u_0 \rVert_{H^s}  \, d\tau.
   \end{align*}
According to the Gronwall Lemma, we have $$ \lVert\phi^t u_0 \rVert_{H^s}  \leq e^{C \int_0^t \| \phi^\tau u_0 \|^{2q}_{W^{\sigma',p}}\, d\tau} \lVert u_0\rVert_{H^s} .$$
Therefore, if for some initial datum $u_0 \in H^s$ we have that $  \int_0^T \| \phi^\tau u_0 \|^{2q}_{W^{\sigma',p}}\, d\tau<\infty$~~$\forall T>0,$ then the solution $\phi^tu_0$ will belong to $C([-T,T];H^s)$.\\
Next, by Holder inequality, we have
$$\int_0^T \| \phi^\tau u_0 \|^{2q}_{W^{\sigma',p}}\, d\tau\leq T^{\gamma}\left(\int_0^T \| \phi^\tau u_0 \|^{r}_{W^{\sigma',p}}\, d\tau\right)^{\frac{2q}{r}}\leq T^{\gamma}\lVert \phi^tu_0\rVert_{Y^{s'}_T}^{2q}.$$\\
So,
$$\lVert\phi^t u_0 \rVert_{H^s}  \leq e^{C T^{\gamma} \lVert \phi^tu_0\rVert_{Y^{s'}_T}^{2q}}\lVert u_0\rVert_{H^s}. $$
On the other hand, we have
\begin{align*}
\lVert \phi^{\tau}u_0 \rVert_{L^r_TW^{\sigma,p}} &\leq C(T)\left(\lVert u_0 \rVert_{H^s}  + \int_0^T \lVert \phi^{\tau}u_0 \rVert_{\infty}^{2q} \lVert \phi^{\tau}u_0 \rVert_{H^s}  \, d\tau\right)\\  
&\leq C(T)\left(\lVert u_0 \rVert_{H^s}  + \int_0^T \lVert \phi^{\tau}u_0 \rVert^{2q}_{W^{\sigma',p}} \lVert \phi^{\tau}u_0 \rVert_{H^s}  \, d\tau\right)\\
&\leq C(T)\left( \lVert u_0 \rVert_{H^s}+e^{CT^{\gamma}\lVert \phi^t u_0\rVert_{Y^{s'}_T}^{2q}}\lVert u_0 \rVert_{H^s}  ~T^{\gamma}\lVert \phi^t u_0\rVert_{Y^{s'}_T}^{2q}\right).
\end{align*}
Therefore we have $$\lVert \phi^t u_0\rVert_{Y^s_T}\leq e^{C T^{\gamma} \lVert \phi^tu_0\rVert_{Y^{s'}_T}^{2q}}\lVert u_0\rVert_{H^s}  +C(T)\left( \lVert u_0 \rVert_{H^s} +e^{CT^{\gamma}\lVert \phi^t u_0\rVert_{Y^{s'}_T}^{2q}}\lVert u_0 \rVert_{H^s}  ~T^{\gamma}\lVert \phi^t u_0\rVert_{Y^{s'}_T}^{2q}\right).$$
The proof is complete.
\end{proof}

\begin{lemma}[Globalization lemma]\label{globalization lemma}
  Let $r,p, \delta$ with $r>\max(2,2q)$ be defined as above. Let $s,s'$ such that $\frac{d}{p}+\delta<s'<s$. 
  \begin{itemize}
      \item Let $u_0\in H^s$ and $T_0$ be the associated existence time, let  $(u_{0,N})_{N\in \mathbb{N}}$ be a sequence such that $u_{0,N}\in E_N$ and ~~$u_{0,N}\to u_0 ~~\text{in}~~ H^s$
      \item Assume that there exists a non decreasing function $f:[0,+\infty)\to [0,+\infty) $ independent of N such that\\ $\lVert \phi^{t}_Nu_{0,N}\rVert_{H^{s'}} \leq f(|t|) ~~\forall t\in \mathbb{R}$.
  \end{itemize}
  Then
  \begin{enumerate}
  \item $\phi^tu_0$ is global in $H^{s'}$;
      \item $\lVert \phi^t u_{0}\rVert_{H^{s'}} \leq f(|t|) ~~\forall t\in \mathbb{R}$;
      \item $\phi^tu_0$ is global in $H^{s}$.
  \end{enumerate}
\end{lemma}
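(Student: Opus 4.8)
The plan is to run a continuation argument at the lower regularity level $s'$, propagate the uniform Galerkin bound to the limiting flow, and then upgrade to $H^s$ through Lemma \ref{lemma_sufficient_cond}. First I would observe that, because $s'>\frac{d}{p}+\delta$, Proposition \ref{local well posedness} applies verbatim with $s'$ in place of $s$ (and with $N=\infty$ for the true flow $\phi^t=\phi^t_\infty$): it provides a local $H^{s'}$-theory whose existence time depends only on the $H^{s'}$-norm of the datum and is uniform in $N$, together with the usual blow-up alternative — if the maximal $H^{s'}$-existence time $T^*$ of $\phi^tu_0$ is finite then $\limsup_{t\uparrow T^*}\|\phi^tu_0\|_{H^{s'}}=+\infty$. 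I would also note that for every $T<T^*$ the function $t\mapsto\|\phi^tu_0\|_{H^{s'}}$ is bounded on the compact interval $[-T,T]$, so that covering $[-T,T]$ by finitely many local-existence subintervals yields $\phi^tu_0\in Y^{s'}_T$.

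The core step is to transfer the uniform bound $\|\phi^t_Nu_{0,N}\|_{H^{s'}}\le f(|t|)$ onto $\phi^tu_0$. Fix $T<T^*$. Both families are then bounded in $Y^{s'}_T$: the true solution by the previous paragraph, and the Galerkin solutions \emph{uniformly in $N$}, since $\|\phi^t_Nu_{0,N}\|_{H^{s'}}\le f(T)$ on $[-T,T]$ combined with the uniform-in-$N$ increment time and the increment property of Proposition \ref{local well posedness} lets us cover $[-T,T]$ by a fixed number of increment intervals and sum the increment estimates. With both solutions controlled in $Y^{s'}_T$, I would re-run the Gronwall/Strichartz estimate for the difference $\phi^tu_0-\phi^t_Nu_{0,N}$ from the proof of Lemma \ref{local uniform}, but now on the \emph{whole} interval $[-T,T]$ rather than on a short one: the Gronwall factor built from the $L^r_T$-Strichartz norms of the two flows is finite and independent of $N$, while the driving terms $\|u_0-u_{0,N}\|_{H^{s''}}$ and $(1+\lambda_N)^{(s''-s')/2}$ tend to $0$ for any $\frac{d}{p}+\delta<s''<s'$. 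This yields $\phi^t_Nu_{0,N}\to\phi^tu_0$ in $Y^{s''}_T$, in particular in $C([-T,T];H^{s''})$.

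I would then pass the bound to the limit by weak lower semicontinuity. For each fixed $t$ the sequence $(\phi^t_Nu_{0,N})_N$ is bounded in $H^{s'}$ by $f(|t|)$, hence every weakly $H^{s'}$-convergent subsequence has limit $\phi^tu_0$ (identified by the strong $H^{s''}$-convergence), so the full sequence converges weakly in $H^{s'}$ to $\phi^tu_0$ and
\[
\|\phi^tu_0\|_{H^{s'}}\le\liminf_{N\to\infty}\|\phi^t_Nu_{0,N}\|_{H^{s'}}\le f(|t|),\qquad |t|\le T.
\]
Since $T<T^*$ is arbitrary, this holds on all of $(-T^*,T^*)$; as $f$ is non-decreasing and finite, the $H^{s'}$-norm stays bounded on bounded time intervals and the blow-up alternative forces $T^*=+\infty$, which gives conclusions (1) and (2) (the negative-time case being identical). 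Finally, conclusion (2) together with the covering argument shows $\phi^tu_0\in Y^{s'}_T$ for every $T>0$, and Lemma \ref{lemma_sufficient_cond} upgrades this at once to global existence in $H^s$ with $\phi^tu_0\in Y^s_T$ for all $T$, which is (3).

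I expect the main obstacle to be the second step: one must check that the difference estimate of Lemma \ref{local uniform}, originally a short-time contraction bound, genuinely closes on the full interval $[-T,T]$ with constants independent of $N$. The reason it does is that the only quantities entering the Gronwall factor are the $Y^{s'}_T$-norms of the two flows, both of which are finite and $N$-uniform, and it is precisely here that the hypothesis that $f$ is independent of $N$ is indispensable. Note also that one cannot instead iterate Lemma \ref{local uniform} interval-by-interval, since each restart would demand $H^{s'}$-convergence of the data, whereas the scheme only propagates convergence in the weaker norm $H^{s''}$.
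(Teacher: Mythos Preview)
Your proof is correct and takes a genuinely different route from the paper's. The paper iterates on short subintervals: fixing $T\gg 1$, it sets $\Lambda=f(T)+1$, takes $T_0$ to be the uniform $H^{s'}$-existence time for $B_\Lambda(H^{s'})$, and on each block $[kT_0,(k+1)T_0]$ runs the difference estimate at the regularity pair $(s,s')$ rather than your $(s',s'')$ --- the gap $s'<s$ makes the $(1+\lambda_N)^{(s'-s)/2}$ term decay and yields \emph{strong} $H^{s'}$-convergence $\phi^t_Nu_{0,N}\to\phi^tu_0$, together with $\phi^{(k+1)T_0}u_0\in B_\Lambda$, which is exactly what is needed to restart. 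You instead handle the whole interval $[-T,T]$ (for $T<T^*$) in a single Gronwall pass at the auxiliary pair $(s',s'')$, and then recover the $H^{s'}$ bound by weak lower semicontinuity. Your approach trades the iteration bookkeeping for an extra regularity index and a soft weak-limit step; the paper's approach is more direct but must simultaneously carry the $H^s$ norm of the true solution from block to block (implicitly via the estimate underlying Lemma~\ref{lemma_sufficient_cond}) so that the $(1-P_N)$ term remains controlled. Your closing claim that ``one cannot iterate interval-by-interval'' is accurate for your $(s',s'')$ scheme but not in general: the paper does iterate, precisely because at the $(s,s')$ level the scheme propagates strong $H^{s'}$-convergence, which suffices for the restart.
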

\begin{proof}
From the hypothesis, we have, for $|t|\leq T$,
\begin{align}
    \sup_{|t|\leq T}\|\phi^tu_Nu_{0,N}\|_{H^{s'}}\leq f(T).
\end{align}
Then, in particular,
\begin{align}
     \|u_{0,N}\|_{H^{s'}}\leq f(T)< \Lambda,
\end{align}
which implies that $(u_{0,N})_N\subset B_\Lambda$. Passing to the limit, we obtain that $u_0\in B_\Lambda$.

Let $T>>1$ be arbitrary. It suffices to show that the solution $\phi^tu_0$ exists on $[-T,T]$. Set $\Lambda=f(T)+1.$ Consider the ball $B_\Lambda(H^{s'})$ and the associated time existence $T_0.$

We write, with the use of estimation arguments similar to lemmas above,
\begin{align*}
\phi^t(u_0) - \phi_N^t(u_{0,N}) &= S(t)(u_0 - u_{0,N}) \quad -i \int_0^t S(t-\tau) P_N \left( |\phi^\tau(u_0)|^{2q} \phi^\tau(u_0) - |\phi_N^\tau(u_{0,N})|^{2q} \phi_N^\tau(u_{0,N}) \right) d\tau \\
&\quad -i \int_0^t S(t-\tau)(1 - P_N) |\phi^\tau(u_0)|^{2q} \phi^\tau(u_0) \, d\tau;
\end{align*}
\begin{align*}
\lVert \phi^t u_0 - \phi^t_N u_{0,N} \rVert_{Y_{T_0}^{s'}} 
&\lesssim \lVert u_0 - u_{0,N} \rVert_{H^{s'}} \nonumber 
+  \int_{0}^{T_0} \left( \lVert \phi^{\tau} u_0 \rVert_{W^{\sigma,p}}^{2q} 
+ \lVert \phi^{\tau} u_{0,N} \rVert_{W^{\sigma,p}}^{2q} \right) 
\lVert \phi^{\tau} u_0 - \phi^{\tau} u_{0,N} \rVert_{H^{s'}} \, d\tau  \\
&\quad + (1 + \lambda_N)^{\frac{s' - s}{2}} \int_0^{T_0} 
\lVert \phi^{\tau} u_0 \rVert_{W^{\sigma, p}}^{2q} 
\lVert \phi^{\tau} u_0 \rVert_{H^s} \, d\tau.
\end{align*}
Using the properties of $T_0$, we infer that
\begin{align}
 \| \phi^t u_0 - \phi^t_N u_{0,N} \|_{Y_{T_0}^{s'}}\lesssim \|u_0-u_{0,N}\|_{H^{s'}}+(1+\lambda_N)^{\frac{s'-s}{2}}.   
\end{align}
In particular, as $N\to\infty$,
$$\| \phi^t(u_0) - \phi_N^t (u_{0,N}) \|_{X^{s'}_{T_0}} \to 0.$$

  We have by using the triangle inequality and the passage to the limit that $\forall|t|\leq T_0$,  
  $$\lVert \phi^t u_0\rVert_{H^{s'}}\leq \lVert \phi^t u_0-\phi_N^tu_{0,N}\rVert_{H^{s'}}+\lVert \phi_N^t u_{0,N}\rVert_{H^{s'}}\leq o(1)+\Lambda.$$
  We obtain that $\phi^{T_0}u_0$ still belongs to $B_\Lambda$. \\
By repeating this procedure $n$ times, with $nT_0\leq T$, we will see that \, $\phi^{nT_0}(u_0)$\, remains in the ball $B_{\Lambda}$, which allows to extend the solution on $[-T,T]$, as desired.
   Moreover, as $N\to \infty$ $$\forall t\in [-T,T],~~~ \lVert \phi^t u_0\rVert_{H^{s'}}\leq \lVert \phi^t u_0-\phi^tu_{0,N}\rVert_{H^{s'}}+\lVert \phi^t_N u_{0,N}\rVert_{H^{s'}}\leq o(1)+ f(|t|).$$ 
  Now, Lemma \ref{lemma_sufficient_cond} implies that  $\phi^tu_0$ is global in $H^{s}.$ 
\end{proof}

    \leavevmode\par
\section{Dissipation models and apriori estimates}\label{Sect. Dissip_apriori}
In Section \ref{Sect_Glob}, we aim use the IID limit method the globalize the local solutions constructed previously. A key step in this method lies in designing a suitable  dissipation model, which constitutes one of its challenging aspects. This is because it involves identifying dissipation that incorporates linear and nonlinear effects taking into account parameters related to dimensions and nonlinearity powers. Finding a dissipation mechanism that considers both linear and nonlinear phenomena can be a difficult task for supercritical regimes. In this section, we construct a dissipation model that captures all powers of the nonlinearity and dimensions. In some regimes, we will need the following very useful inequalities.
\subsection{Some useful inequalities}
\begin{proposition}\label{Prop-Est-Nonlinearity}
Let $\Delta$ be the Laplace-Beltrami operator on the Riemannian manifold ${M}$. Let $\gamma\in(0,1]$. For any $f\in C^\infty({M},\C)$, 
\begin{itemize}
\item we have the inequality
\begin{align}
\langle|f|^2f,(-\Delta)^\gamma f\rangle\geq \frac{1}{2}\|(-\Delta)^{\frac{\gamma}{2}}|f|^2\|_{L^2}^2. 
\end{align}
  \item for any real number $q>1$, we have the inequality
  \begin{align}
\langle|f|^{2q}f,(-\Delta)^\gamma f\rangle\geq \frac{1}{q+1}\|(-\Delta)^{\frac{\gamma}{2}}|f|^{q+1}\|_{L^2}^2. 
\end{align}
\end{itemize}
 \end{proposition}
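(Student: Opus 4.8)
The plan is to reduce the operator inequality to an elementary pointwise (numerical) inequality via the Bochner subordination formula for $(-\Delta)^\gamma$. First I would treat $\gamma\in(0,1)$ and recover the endpoint $\gamma=1$ at the end by continuity in $\gamma$ (for fixed $f\in C^\infty(M)$ both sides depend continuously on $\gamma$ by the spectral theorem, since the spectral coefficients of $f$ and of $|f|^{2q}f$ decay rapidly). Writing $P_t=e^{t\Delta}$ for the heat semigroup, with symmetric nonnegative kernel $p_t(x,y)=p_t(y,x)\geq 0$ and total mass $\mu_t(x):=\int_M p_t(x,y)\,dy\leq 1$ (with equality when $M$ is closed), I would use $(-\Delta)^\gamma=c_\gamma\int_0^\infty (I-P_t)\,t^{-1-\gamma}\,dt$ with $c_\gamma=\gamma/\Gamma(1-\gamma)>0$, which is verified on each eigenfunction because $\lambda_n^\gamma=c_\gamma\int_0^\infty(1-e^{-\lambda_n t})\,t^{-1-\gamma}\,dt$. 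Since $c_\gamma t^{-1-\gamma}\,dt$ is a positive measure, it then suffices to prove, for every $t>0$, the termwise bound $\langle|f|^{2q}f,(I-P_t)f\rangle\geq \tfrac{1}{q+1}\langle h,(I-P_t)h\rangle$, where $h:=|f|^{q+1}$.

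Next I would symmetrize both quadratic forms against the kernel. Using $\int_M p_t(x,y)\,dy=\mu_t(x)$, the symmetry of $p_t$, and a relabeling $x\leftrightarrow y$, one gets
$$\langle|f|^{2q}f,(I-P_t)f\rangle=\tfrac12\int_M\!\int_M G(f(x),f(y))\,p_t(x,y)\,dx\,dy+\int_M|f|^{2q+2}(1-\mu_t)\,dx,$$
where $G(a,b):=|a|^{2q+2}+|b|^{2q+2}-(|a|^{2q}+|b|^{2q})\,\mathrm{Re}(a\bar b)$, while the standard Dirichlet-form identity for the real function $h$ gives $\tfrac12\iint (h(x)-h(y))^2 p_t+\int h^2(1-\mu_t)$. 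Since $0<\tfrac{1}{q+1}\le1$ and $1-\mu_t\ge0$, the mass-defect terms already obey $\int|f|^{2q+2}(1-\mu_t)\ge\tfrac{1}{q+1}\int h^2(1-\mu_t)$ (this harmlessly absorbs the manifold-with-boundary case), so everything reduces to the pointwise inequality
$$G(a,b)\ \ge\ \frac{1}{q+1}\bigl(|a|^{q+1}-|b|^{q+1}\bigr)^2,\qquad a,b\in\C.$$

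To establish this I would first discard the phase: since $\mathrm{Re}(a\bar b)\le|a|\,|b|$ and $|a|^{2q}+|b|^{2q}\ge0$, setting $x=|a|,\ y=|b|\ge0$ yields $G(a,b)\ge x^{2q+2}+y^{2q+2}-(x^{2q}+y^{2q})xy=(x-y)(x^{2q+1}-y^{2q+1})$. It then remains to prove the purely real bound $(x-y)(x^{2q+1}-y^{2q+1})\ge\tfrac{1}{q+1}(x^{q+1}-y^{q+1})^2$. For this I would invoke, for $p\ge1$ and $x>y\ge0$, the Cauchy–Schwarz estimate $\bigl(\int_y^x s^{(p-1)/2}\,ds\bigr)^2\le(x-y)\int_y^x s^{p-1}\,ds$; combined with $x^p-y^p=p\int_y^x s^{p-1}\,ds$ and $x^{(p+1)/2}-y^{(p+1)/2}=\tfrac{p+1}{2}\int_y^x s^{(p-1)/2}\,ds$ this gives $(x-y)(x^p-y^p)\ge\tfrac{4p}{(p+1)^2}(x^{(p+1)/2}-y^{(p+1)/2})^2$. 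Taking $p=2q+1$ produces the constant $\tfrac{2q+1}{(q+1)^2}$, which dominates $\tfrac{1}{q+1}$ since $2q+1\ge q+1$ for $q\ge1$; this proves the pointwise bound, and hence the proposition, with part (1) being exactly the case $q=1$.

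A word on the difficulty: once the reduction is set up the computation is short, so \textbf{the delicate points are the bookkeeping of the complex cross terms} $\mathrm{Re}\!\left[|a|^{2q}a\bar b+|b|^{2q}b\bar a\right]$, which must be checked to collapse into the single real factor $(|a|^{2q}+|b|^{2q})\mathrm{Re}(a\bar b)$, together with justifying the subordination integral and the Fubini interchanges (this is where the smoothness of $f$ is used). I expect the algebraic factorization $(x-y)(x^{2q+1}-y^{2q+1})$ and the Cauchy–Schwarz numerical lemma to be the genuine crux, while the limit $\gamma\to1^-$ and the favorable sign of the mass-defect terms are routine.
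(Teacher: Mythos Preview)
Your argument is correct and takes a genuinely different route from the paper. The paper quotes the C\'ordoba--C\'ordoba pointwise inequality $\Phi'(g)(-\Delta)^\gamma g\ge(-\Delta)^\gamma\Phi(g)$ as a black box and applies it twice: first with $\Phi(x)=x^2/2$ on the real and imaginary parts of $f$ to obtain $\mathrm{Re}\bigl[f\,(-\Delta)^\gamma\bar f\bigr]\ge\tfrac12(-\Delta)^\gamma|f|^2$, then (for $q>1$) with $\Phi(x)=x^{(q+1)/2}$ on $|f|^2$ to pass from $\langle |f|^{2q},(-\Delta)^\gamma|f|^2\rangle$ to $\|(-\Delta)^{\gamma/2}|f|^{q+1}\|_{L^2}^2$. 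You instead unfold the fractional Laplacian via the Bochner subordination $(-\Delta)^\gamma=c_\gamma\!\int_0^\infty(I-P_t)\,t^{-1-\gamma}dt$ and reduce everything to a single two-point numerical inequality, which you then settle by the factorization $(x-y)(x^{2q+1}-y^{2q+1})$ together with a Cauchy--Schwarz estimate on $\int_y^x s^{(p-1)/2}ds$. This is more self-contained --- it is essentially a proof of (the integrated form of) C\'ordoba--C\'ordoba in the special case needed --- and it even produces the sharper constant $\tfrac{2q+1}{(q+1)^2}$ in place of $\tfrac{1}{q+1}$. The trade-off is that the paper's proof is a few lines once C\'ordoba--C\'ordoba is accepted, whereas yours carries a bit more analytic overhead (subordination, Fubini, the $\gamma\to1^-$ limit, and the mass-defect bookkeeping for the boundary case), all of which you handle cleanly.
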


 The proof
 of the proposition relies on the following famous inequality of C\'ordoba-C\'ordoba \cite{cordoba2003pointwise}.
\begin{lemma}[C\'ordoba-C\'ordoba inequality]\label{a}
Under the setting of the proposition above, let $\Phi$ be a convex $C^2(\mathbb{R},\mathbb{R})$-function satisfying $\Phi(0)=0$, and $\gamma \in (0,1]$. For any $f \in  C^\infty({M}, \mathbb{R})$, the inequality
\begin{align}
    \Phi'(f)(-\Delta)^{\gamma}f\geq (-\Delta)^{\gamma}\Phi(f)
\end{align}
holds pointwise almost everywhere in ${M}$.
\end{lemma}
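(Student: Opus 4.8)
The plan is to prove the pointwise inequality by splitting into the two regimes $\gamma=1$ and $\gamma\in(0,1)$, and in both cases to reduce the statement to nothing more than the convexity of $\Phi$. Since $f\in C^\infty(M,\R)$ everything below is genuinely pointwise (the ``almost everywhere'' is only a safety clause), and by a standard density/approximation remark it suffices to argue for a fixed smooth $f$.

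The case $\gamma=1$ is an immediate consequence of the chain rule. Here $(-\Delta)^\gamma=-\Delta$, and for smooth $f$ one has $\Delta\Phi(f)=\Phi'(f)\Delta f+\Phi''(f)\abs{\nabla f}^2$, so that
\[
\Phi'(f)(-\Delta)f-(-\Delta)\Phi(f)=\Phi''(f)\,\abs{\nabla f}^2\geq 0,
\]
the sign being forced by $\Phi''\geq 0$. For $\gamma\in(0,1)$ I would instead invoke the Balakrishnan (heat-semigroup) representation of the fractional power. Writing $(e^{t\Delta})_{t\ge 0}$ for the heat semigroup with its nonnegative, mass-conserving kernel $p_t(x,y)$ (compactness gives stochastic completeness, i.e. $\int_M p_t(x,y)\,dy=1$), I would use, for smooth $g$,
\[
(-\Delta)^\gamma g(x)=c_\gamma\int_0^\infty\bigl(g(x)-e^{t\Delta}g(x)\bigr)\,\frac{dt}{t^{1+\gamma}},\qquad c_\gamma=\frac{1}{\abs{\Gamma(-\gamma)}}>0 .
\]
This matches the spectral definition eigenmode by eigenmode, since $c_\gamma\int_0^\infty(1-e^{-t\lambda})\,t^{-1-\gamma}\,dt=\lambda^\gamma$ for every $\lambda\geq 0$; the time integral converges because $g-e^{t\Delta}g=O(t)$ as $t\to 0^+$ and is bounded as $t\to\infty$ while $\gamma>0$. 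Using mass conservation I then rewrite the semigroup increment as an average, $g(x)-e^{t\Delta}g(x)=\int_M p_t(x,y)\bigl(g(x)-g(y)\bigr)\,dy$.

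Applying this representation to both $g=f$ and $g=\Phi(f)$ and subtracting, the quantity I must sign becomes
\[
\Phi'(f(x))(-\Delta)^\gamma f(x)-(-\Delta)^\gamma\Phi(f)(x)=c_\gamma\int_0^\infty\!\!\int_M p_t(x,y)\,\mathcal{R}(x,y)\,dy\,\frac{dt}{t^{1+\gamma}},
\]
where $\mathcal{R}(x,y)=\Phi(f(y))-\Phi(f(x))-\Phi'(f(x))\bigl(f(y)-f(x)\bigr)$ is precisely the gap between the graph of $\Phi$ and its tangent line at the point $f(x)$. Convexity of $\Phi$ gives $\mathcal{R}(x,y)\geq 0$ for all $x,y$, and since $p_t\geq 0$ and $c_\gamma>0$ the whole expression is nonnegative, which is the desired inequality.

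I expect the only real obstacle to be the rigorous justification of the semigroup representation, not the inequality itself. One must verify that the Balakrishnan integral converges pointwise for smooth $f$, that it indeed reproduces the spectrally defined $(-\Delta)^\gamma$ (in particular that the zero eigenvalue, i.e. the constant mode, is harmless: it is annihilated on both sides, and $\mathcal{R}$ is unchanged by adding a constant to $f$), and that Tonelli's theorem legitimately permits the interchange of the $t$- and $y$-integrations—which is exactly what the one-signed integrand $\mathcal{R}\ge 0$ allows. Once these points are settled, the convexity step and the $\gamma=1$ computation are entirely elementary.
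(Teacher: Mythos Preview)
The paper does not prove this lemma; it is stated as the ``famous inequality of C\'ordoba--C\'ordoba'' and simply cited from \cite{cordoba2003pointwise}. Your proof via the heat-semigroup (Balakrishnan) representation is correct and is essentially the standard argument, transported to a compact manifold: the case $\gamma=1$ is the elementary chain-rule computation, and for $\gamma\in(0,1)$ the nonnegativity of the heat kernel together with the tangent-line inequality for convex $\Phi$ gives the sign immediately. The convergence checks you list (the $O(t)$ behavior at $t\to 0^+$, boundedness at $t\to\infty$, and the spectral identity $c_\gamma\int_0^\infty(1-e^{-t\lambda})t^{-1-\gamma}\,dt=\lambda^\gamma$) are all routine.

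One small remark: you do not use the hypothesis $\Phi(0)=0$, and indeed on a \emph{boundaryless} compact manifold, where stochastic completeness $\int_M p_t(x,y)\,dy=1$ holds, it is not needed. If one allows a Dirichlet Laplacian on a manifold with boundary---which the paper's introduction explicitly permits---the heat semigroup is only sub-Markovian, and your identity $g(x)-e^{t\Delta}g(x)=\int_M p_t(x,y)(g(x)-g(y))\,dy$ acquires the extra term $(1-m_t(x))\,g(x)$ with $m_t(x)=\int_M p_t(x,y)\,dy\le 1$. The resulting additional contribution to your integrand is
\[
(1-m_t(x))\bigl[\Phi'(f(x))f(x)-\Phi(f(x))\bigr],
\]
which is nonnegative precisely because $\Phi(0)=0$ and $\Phi$ is convex (it is the tangent-line gap evaluated at the origin). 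So the hypothesis is there to cover the Dirichlet case, and your argument extends with this one-line addition.
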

\begin{proof}[Proof of Proposition \ref{Prop-Est-Nonlinearity}]
    Let us start by proving a complex version of the C\'ordoba-C\'ordoba inequality. For $f\in C^\infty({M},\C)$
    \begin{align}
        \mathcal{R}[f(-\Delta)^{\gamma} \bar{f}]\geq \frac{1}{2}(-\Delta)^{\gamma}|f|^2.
    \end{align}
    Indeed, let us write $f=a+ib$. We have that
    \begin{align}
        \mathcal{R}[f(-\Delta)^{\gamma} \bar{f}]=a(-\Delta)^{\gamma}a+b(-\Delta)^{\gamma}b\geq \frac{1}{2}(-\Delta)^{\gamma}(a^2+b^2)=\frac{1}{2}(-\Delta)^{\gamma}|f|^2,
    \end{align}
    where, in the last step, we used the (real) C\'ordoba-C\'ordoba inequality.\\
    Now, applying the complex inequality above, we have
    \begin{align}
       \langle|f|^{2q}f,(-\Delta)^\gamma f\rangle\geq \frac{1}{2}\langle |f|^{2q},(-\Delta)^\gamma |f|^2 \rangle.
    \end{align}
    We distinguish two cases:
    \begin{itemize}
        \item for $q=1$. We obtain the corresponding claim.
        \item for $q>1$, we write
        \begin{align}
       \langle|f|^{2q}f,(-\Delta)^\gamma f\rangle &\geq \frac{1}{2}\langle |f|^{2q},(-\Delta)^\gamma |f|^2 \rangle\\
       &=\frac{1}{2}\langle (|f|^2)^{\frac{q+1}{2}},(|f|^2)^{\frac{q-1}{2}}(-\Delta)^\gamma |f|^2 \rangle
    \end{align}
    and apply the real C\'ordoba-C\'ordoba inequality with $\Phi(x)=x^{\frac{q+1}{2}}$ (which is convex for $q>1$), and obtain 
    \begin{align}
        \frac{1}{2}\langle (|f|^2)^{\frac{q+1}{2}},(|f|^2)^{\frac{q-1}{2}}(-\Delta)^\gamma |f|^2 \rangle\geq \frac{1}{2}\frac{2}{q+1}\langle (|f|^2)^{\frac{q+1}{2}},(-\Delta)^\gamma (|f|^2)^{\frac{q+1}{2}} \rangle.
    \end{align}
    \end{itemize}
      Hence we obtain the claim.
\end{proof}

\subsection{The dissipation model and estimation of the dissipation rates}
 Let $\epsilon>0$ be small enough, introduce the dissipation operator 
 \begin{align}
        \mathcal{L}_s(u)=(-\Delta)^{s-1}u+C_{d,s}\lVert u \rVert_{H^{s^-}}^{3\epsilon^{-1}}u 
   \end{align}
    where $s_{M^d}<s$, $C_{d,s}$ and $\epsilon$ will be determined below (as required of the estimation of the dissipation rate of energy). \\ Here, we derive some apriori estimates on the dissipation rates of the mass $M(u)$ and the energy $E(u)$ of \eqref{Intro_NLS}. We denote $\tilde{k}=\epsilon^{-1}$ and use both notations.

The dissipation rate of a functional $F(u)$ against $\mathcal{L}_s(u)$ is given by
\begin{align}
    \mathcal{F}(u)=\langle \delta_u F,\mathcal{L}_s(u)\rangle,
\end{align}
where $\delta_u F$ stands for the functional derivative (in the sense of Gateaux) at $u$ of a functional $F$ on $L^2$:
\begin{align}
    F'(u,v)=\langle\delta_uF,v\rangle\quad \forall v\in L^2.
\end{align}
Hence, for the mass we obtain
\begin{align*}
\mathcal{M}(u)&= \langle \delta_u M, \mathcal{L}_s(u)\rangle= \lVert u\rVert_{H^{s-1}}^2+C_{d,s}\lVert u\rVert_{H^{s^-}}^{3\tilde{k}}\lVert u\rVert_{L^2}^2.
\end{align*}
As for the dissipation rate of the energy, the estimations are more involved:
\begin{align}
     \mathcal{E}(u)&=\langle \delta_u E, \mathcal{L}_s(u)\rangle=\lVert u\rVert_{H^s}^2+C_{d,s}\lVert u\rVert_{H^{s^-}}^{3\tilde{k}}\lVert u\rVert_{L^{2q+2}}^{2q+2}+C_{d,s}\lVert u\rVert_{H^{s^-}}^{3\tilde{k}}\lVert u\rVert_{H^1}^2+\langle (-\Delta)^{s-1}u,|u|^{2q}u\rangle. 
\end{align}
In order to show that the quantity above is coercive, we need to absorb the last term by the first ones. We separate two cases:
\begin{itemize}
    \item For $s_{M^d}<s\leq 2$, we have by using Proposition \ref{Prop-Est-Nonlinearity}
$$ \mathcal{E}(u)\geq \lVert u\rVert_{H^s}^2+C_{d,s}\lVert u\rVert_{H^{s^-}}^{3\tilde{k}}\lVert u\rVert_{L^{2q+2}}^{2q+2}+C_{d,s}\lVert u\rVert_{H^{s^-}}^{3\tilde{k}}\lVert u\rVert_{H^1}^2+\frac{1}{q+1}\lVert (-\Delta)^{\frac{s-1}{2}}|u|^{q+1}\lVert^2_{\mathbb{L}^2}. $$
\item Let now $\frac{d}{2}\geq s>2$. We first notice that $d$ must be larger than $4$. Write 
$$\langle (-\Delta)^{s-1}u,|u|^{2q}u\rangle=\langle (-\Delta)^{\frac{s}{2}}u,(-\Delta)^{\frac{s-2}{2}}|u|^{2q}u\rangle\leq \frac{1}{2}\lVert u\rVert_{H^s}^2+\frac{1}{2}\lVert (-\Delta)^{\frac{s-2}{2}}|u|^{2q}u\rVert_{L^2}^2. $$\\
Now, by using the following chain (see e.g. Christ-Weinstein \cite{christ1991dispersion}) with $\frac{1}{2}=\frac{1}{2+\gamma}+\frac{1}{(2+\gamma)^*}$,
we have 
\begin{align*}
    \lVert (-\Delta)^{\frac{s-2}{2}}|u|^{2q}u\rVert_{L^2}^2 &\lesssim\lVert|u|^{2q}\rVert_{L^{(2+\gamma)^*}}^2\lVert u\rVert_{W^{s-2,2+\gamma}}^2\\
    &\lesssim \lVert u\rVert_{L^{2q(2+\gamma)^*}}^{4q}\lVert u\rVert_{W^{s-2,2+\gamma}}^2.
\end{align*}
Now, we have $W^{s-1+\delta,2}\hookrightarrow W^{s-2,2+\gamma}$  with $\delta\in (0,1)$ 
 for ${1+\delta}=(\frac{d}{2}-\frac{d}{2+\gamma})^+$.
So, $2(1+\delta^-)(2+\gamma)=\gamma d$, then $\gamma=\frac{4(1+\delta^-)}{d-2-2\delta^-}$ and $(2+\gamma)^*=\frac{d}{1+\delta^-}.$ Recall that $d>4$, therefore $\gamma$ is still positive.\\\\
We remark that $H^{s-}\hookrightarrow L^{2q(2+\gamma)^*}$ if $s>\frac{d}{2}-\frac{d}{2q(2+\gamma)^*}=\frac{d}{2}-\frac{d}{\frac{2qd}{1+\delta^-}}=\frac{d}{2}-\frac{(1+\delta^-)}{2q}$ with $\delta\in(0,1).$\\
By taking $\delta=1^-$, we can then have this embedding for $s>\frac{d}{2}-\frac{1}{q}=s_{q,d}$.\\ \\
Now, let $s>\max(2,s_{q,d})$, $\delta=1^-$, so $s-1+\delta \approx s^-$, then $s-1+\delta=s^-(1-\beta)+\beta\times 0 $ with $\beta \in (0,1)$ is close enough to $0$.\\
 By interpolating $H^{s-1+\delta}$ between $H^{s^-}$ and $L^2$, we obtain
 \begin{align*}
 &\lVert (-\Delta)^{\frac{s-2}{2}}|u|^{2q}u\rVert_{L^2}^2 \lesssim \lVert u\rVert_{H^{s^-}}^{4q}\lVert u\rVert_{H^{s-1+\delta}}^2\\
 & \lesssim \lVert u\rVert_{H^{s^-}}^{4q+2-2\beta} \lVert u\rVert_{L^2}^{2\beta}      \\
 &\lesssim C+\lVert u\rVert_{L^2}^2\lVert u\rVert_{H^{s^-}}^{3\tilde{k},} ~~\text{according the Young's inequality where}\\
 &~~\quad \quad \quad \quad 3\tilde{k}=\frac{4q+2-2\beta}{\beta} ~~~~~\text{i.e}~\tilde{k}\to \infty ~\text{ large enough} (or\, \epsilon\to 0 \text{ small enough})\\
 &= 2K_{s,d}+ C_{d,s}\lVert u\rVert_{L^2}^2\lVert u\rVert_{H^{s^-}}^{3\tilde{k}}.
\end{align*}
Hence $$ \mathcal{E}(u)\geq \frac{1}{2}\lVert u\rVert_{H^s}^2+C_{d,s}\lVert u\rVert_{H^{s^-}}^{3\tilde{k}}\lVert u\rVert_{L^{2q+2}}^{2q+2}+ \frac{C_{d,s}}{2}\lVert u\rVert_{L^2}^2\lVert u\rVert_{H^{s^-}}^{3\tilde{k}}-K_{s,d} =\mathcal{E}_0(u)-K_{s,d}.$$
\end{itemize}
Overall, we have that
\begin{align}\label{Coercive_E}
    \mathcal{E}_0(u):=\frac{1}{2}\lVert u\rVert_{H^s}^2+C_{d,s}\lVert u\rVert_{H^{s^-}}^{3\tilde{k}}\lVert u\rVert_{L^{2q+2}}^{2q+2}+ \frac{C_{d,s}}{2}\lVert u\rVert_{L^2}^2\lVert u\rVert_{H^{s^-}}^{3\tilde{k}}\leq \mathcal{E}(u)+K_{s,d}.
\end{align}
Therefore an estimate from above on $\mathcal{E}(u)$ yields an estimate on $\mathcal{E}_0(u)$, which is a coercive functional. 

\leavevmode

\section{IID-limit and globalization}\label{Sect_Glob}
In this section, we perform our globalization argument following the IID limit strategy. Here are the main steps:
\begin{itemize}
\item Step 1: We consider a stochastic equation consisting in damped-driven Galerkin projection of NLS. We prove global well-posedness and construct stationary measures. Useful bounds on the measure are proven.
\item Step 2: We pass to the inviscid limit, hence recovering a statistical theory of the Galerkin projections of NLS.
\item Step 3: We construct ensembles selecting ``well behaved'' approximate solutions that enjoy uniform individual bounds allowing to use our previous globalization lemma.
\item Step 4: We prove properties of the statistical ensemble and of the measure.\\\\
\end{itemize}

We denote by $(\Omega, \mathcal{F},\mathbb{P})$ a complete probability space. For a Banach space $E$, we consider random variables $X:\Omega \to E$~~being Bochner measurable functions with respect to $\mathcal{F}$ and $\mathcal{B}(E)$, where $\mathcal{B}(E)$ is Borel $\sigma$-algebra of $E$.\\
 For any positive integer $N$, we define the $N$-dimension Brownian motion by \[\mathcal{W}^N(t,x)=\sum_{n=1}^Na_ne_n(x)\mathcal{B}_n(t)\] where $(\mathcal{B}_n(t)$ is a one dimensional independent Brownian motions with filtration $(\mathcal{F}_t)_{t\geq 0}$.\\
 The numbers $(a_n)_{n\geq 1}$ are such that $A^0=\sum_{n\geq 1}a_n^2<\infty$. \\
 We denote by $A_N^s=\sum_{n=1}^N \lambda_n^sa_n^2$ and $A^s=\sum_{n\geq1} \lambda_n^sa_n^2$ where $(\lambda_n)_{n\geq 1}$ are the eigenvalues of $-\Delta_g$ such that $A^{\frac{d}{2}-\frac{1}{2}}<\infty$.\\
 Let us recall that $(,)$ denote the real dot product in $L^2(M;\C)$.\\
 Let $F: E_N \to \mathbb{R}, u\mapsto F(u)$ be a smooth function. We denote by $F'(u;v)$ and $F''(u;v,w)$ respectively the first and second derivative of $F$ at $u$.\\


\subsection{Estimates on the stationary measures of the stochastic problems }
\leavevmode\\
Let us consider the following problem
\begin{equation}\label{eq3}
\left\{
\begin{aligned}
du &= \left[i \left(\Delta u - P_N ( |u|^{2q}u)\right)-\alpha\left( (-\Delta)^{s-1}u+C_{d,s}\lVert u \rVert_{H^{s^-}}^{3\tilde{k}}u \right)\right]dt+\sqrt{\alpha} \sum_{n=1}^N a_ne_n(x)d\mathcal{B}_n(t),\\
u(0) &= P_N u_0 \in E_N.
\end{aligned}
\right.
 \end{equation}
  \begin{definition}(Stochastic global well-posedness). 
  \leavevmode\\
We say that \eqref{eq3} is stochastically globally well-posed on $E_N$ if the following properties hold: \\ 
\begin{itemize}
    \item For any $E_N$-valued random variable $u_0$ independent of $\mathcal{F}_t$, 
    \begin{itemize}
        \item for $\mathbb{P}$-a.e. $\omega$, there exists a solution $u = u^\omega \in C(\mathbb{R}^+; E_N)$ of \eqref{eq3} with initial datum $u_0 = u_0^\omega$ in the integral sense, i.e.,
        \begin{equation*}
            u(t) = u_0 + \int_0^t \Big[i \big(\Delta u - P_N (|u|^{2q}u) \big) -\alpha\left( (-\Delta)^{s-1}u+C_{d,s}\lVert u \rVert_{H^{s^-}}^{3\tilde{k}}u\right)\Big] \, d\tau + \sqrt{\alpha} \mathcal{W}^N(t)
        \end{equation*}
        with equality as elements of $C(\mathbb{R}^+; E_N)$.
    \end{itemize}
    \item For any $u_1^\omega, u_2^\omega \in C(\mathbb{R}^+; E_N)$ two solutions with the same initial data $u_{1,0}^\omega = u_{2,0}^\omega$, then $u_1^\omega \equiv u_2^\omega$.\\
    \item The process $(\omega, t) \mapsto u^\omega(t)$ is adapted to the filtration $\sigma(u_0, \mathcal{F}_t)$.

\end{itemize}
\end{definition}
Using an argument of Da-Prato-Debussche \cite{da2002two}, we split the unknown as follows $u=v+z$, where
\begin{equation} \label{linear-stoch-eq}
\left\{
\begin{aligned}
dz_{\alpha} &= (i\Delta z_{\alpha}-\alpha(-\Delta)^{s-1}z_{\alpha})dt+\sqrt{\alpha}\sum_{n=1}^N a_ne_n(x)d\mathcal{B}_n(t)\\
z_{\alpha}(0)&=0
\end{aligned}
\right.
\end{equation}

\begin{equation}
\left\{
\begin{aligned}
\partial_t v&=i(\Delta v-P_N(|v+z_{\alpha}|^{2q}(v+z_{\alpha})))-\alpha\left((-\Delta)^{s-1}v+ \lVert v+z_{\alpha}\rVert_{H^{s-}}^{3\tilde{k}}(v+z_{\alpha})\right)\\
v(0)&=P_N u_0
\end{aligned}
\right.
\end{equation}
The equation (\ref{linear-stoch-eq}) is a linear stochastic equation whose solution is given by the stochastic convolution $$z_{\alpha}(t)=\sqrt{\alpha}\int_0^t e^{(t-\tau)(i\Delta-\alpha(-\Delta)^{s-1})}d\mathcal{W}^N.$$
We have by applying the Ito's formula
$$\lVert z_{\alpha}(t)\rVert^{2}_{L^2}=2\sqrt{\alpha}\int_0^t \sum_{n=1}^N a_n(z_{\alpha},e_n)d\mathcal{B}_n(\tau)-2\alpha\sqrt{\alpha}\int_0^t \lVert z_{\alpha}(\tau)\rVert_{H^{s-1}}^2d\tau+\sqrt{\alpha}\sum_{n=1}^N a_n^2t,$$
 $$\mathbb{E}\left(\lVert z_{\alpha}(t)\rVert^2_{L^2}+2\alpha\sqrt{\alpha}\int_0^t \lVert z_{\alpha}(\tau)\rVert_{H^{s-1}}^2d\tau\right)=\sqrt{\alpha}\sum_{n=1}^N a_n^2t.$$\\
 Let us fix $T>0$, 
 we have $$\mathbb{E}\left(2\alpha\sqrt{\alpha}\int_0^T \lVert z_{\alpha}(\tau)\rVert_{H^{s-1}}^2d\tau \right)\leq \sqrt{\alpha}\sum_{n=1}^N a_n^2T$$
 we have also by applying Doob's inequality, \\
 $$\mathbb{E}\left(\sup_{t\in [0,T]}\lVert {z_{\alpha}(t)}\rVert_{L^2} \right)^2\leq 4\mathbb{E}\left(\lVert {z_{\alpha}(T)}\rVert_{L^2}^2\right)\leq 4\sqrt{\alpha}\sum_{n=1}^N a_n^2T.$$\\
Consequently, for $\mathbb{P}$-almost all $\omega\in \Omega$, for all $T>0$, $$\sup_{t\in [0,T]}\lVert{z_{\alpha}(\omega,t)}\rVert^2_{L^2}\leq C_{\alpha}(w,T).$$\\
 We see clearly that  for all $\omega\in \Omega$, $z^{\omega}_{\alpha}\in C_t(\mathbb{R},C^{\infty}(E_N))$.\\\\
 Let consider the second equation

\[
\partial_t v=F(v)\quad
v(0)=P_N u_0
\]
where
\begin{align*}
  F:E_N &\to E_N\\
    v &\mapsto F(v)=i(\Delta v-P_N(|v+z_{\alpha}^\omega|^{2q}(v+z_{\alpha}^\omega)))-\alpha\left((-\Delta)^{s-1}v+ \lVert v+z_{\alpha}^\omega\rVert_{H^{s-}}^{3\tilde{k}}(v+z_{\alpha}^\omega)\right).  
\end{align*}
It is easy to see that $F\in C^{\infty}(E_N,E_N)$, then thanks to the Cauchy Lipschitz's theorem, the problem has a smooth local solution.
Now let us show that this solution is global in time $\mathbb{P}$-almost surely. We rely on the equivalence of norms in finite dimension.
\begin{proposition}
    The local solution $v$ constructed above exists globally in time, $\mathbb{P}$-almost surely.
\end{proposition}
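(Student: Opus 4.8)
The plan is to promote the local smooth solution furnished by the Cauchy--Lipschitz theorem to a global one by establishing, for $\mathbb{P}$-a.e.\ $\omega$ and every $T>0$, an a priori bound on $\sup_{t\in[0,T]}\|v(t)\|_{L^2}$. Since the equation is posed on the finite-dimensional space $E_N$, where all norms are equivalent (with constants depending on the fixed $N$), such an $L^2$ bound rules out finite-time blow-up, so the local solution extends over all of $[0,T]$; as $T$ is arbitrary, this gives global existence. Throughout I would use the pathwise bounds already established for the linear part, namely $\sup_{t\in[0,T]}\|z_\alpha(\omega,t)\|_{L^2}^2\leq C_\alpha(\omega,T)$ and $z_\alpha^\omega\in C_t(\mathbb{R};C^\infty(E_N))$.

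First I would test $\partial_t v=F(v)$ against $v$ in the real $L^2$ inner product, obtaining $\frac{d}{dt}\tfrac12\|v\|_{L^2}^2=(F(v),v)$. The dispersive term drops out: since $\Delta$ is self-adjoint, $\int_M \Delta v\,\bar v$ is real, whence $(i\Delta v,v)=\operatorname{Re}\big(i\int_M\Delta v\,\bar v\big)=0$. For the nonlinearity I would substitute $v=(v+z_\alpha)-z_\alpha$; the diagonal contribution $(i|v+z_\alpha|^{2q}(v+z_\alpha),v+z_\alpha)$ vanishes because its integrand has zero real part pointwise (the pointwise analogue of $(w,iw)=0$), leaving a cross term bounded by $\||v+z_\alpha|^{2q}(v+z_\alpha)\|_{L^2}\|z_\alpha\|_{L^2}\lesssim_N\|v+z_\alpha\|_{L^2}^{2q+1}\|z_\alpha\|_{L^2}$. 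The linear damping gives $-\alpha\|(-\Delta)^{\frac{s-1}{2}}v\|_{L^2}^2\leq 0$, which I would simply discard. For the nonlinear damping I would write $(v+z_\alpha,v)=\|v+z_\alpha\|_{L^2}^2-(v+z_\alpha,z_\alpha)$, producing the favourable term $-\alpha\|v+z_\alpha\|_{H^{s^-}}^{3\tilde{k}}\|v+z_\alpha\|_{L^2}^2$ plus a remainder $\lesssim_N\alpha\|v+z_\alpha\|_{L^2}^{3\tilde{k}+1}\|z_\alpha\|_{L^2}$.

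Collecting these and using the equivalence of norms on $E_N$ to bound $\|v+z_\alpha\|_{H^{s^-}}^{3\tilde{k}}\|v+z_\alpha\|_{L^2}^2$ from below by $c_N\|v+z_\alpha\|_{L^2}^{3\tilde{k}+2}$, I arrive at a differential inequality of the schematic form
\[
\frac{d}{dt}\tfrac12\|v\|_{L^2}^2\leq C_N\|v+z_\alpha\|_{L^2}^{2q+1}\|z_\alpha\|_{L^2}+C_N\alpha\|v+z_\alpha\|_{L^2}^{3\tilde{k}+1}\|z_\alpha\|_{L^2}-c_N\alpha\|v+z_\alpha\|_{L^2}^{3\tilde{k}+2}.
\]
Because $\tilde{k}=\epsilon^{-1}$ may be taken arbitrarily large, the dissipative exponent $3\tilde{k}+2$ exceeds both $2q+1$ and $3\tilde{k}+1$; hence, after inserting $\|z_\alpha\|_{L^2}\leq C_\alpha(\omega,T)$, there is a threshold $R_\alpha(\omega,T)$ beyond which the right-hand side is nonpositive. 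Since $\|v\|_{L^2}\geq\|v+z_\alpha\|_{L^2}-\|z_\alpha\|_{L^2}$, a standard barrier/comparison argument for the scalar quantity $g(t)=\tfrac12\|v(t)\|_{L^2}^2$ then yields $\sup_{t\in[0,T]}\|v(t)\|_{L^2}^2\leq\max\big(\|P_Nu_0\|_{L^2}^2,\,R_\alpha(\omega,T)^2\big)$, the desired a priori bound.

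The main obstacle is this last comparison: one must verify that the superlinear damping genuinely dominates the nonlinear growth of order $2q+1$. This is precisely where the freedom to enlarge $\tilde{k}$ (equivalently, to shrink $\epsilon$) is exploited, and where the finite-dimensional equivalence of norms---which converts the $H^{s^-}$ weight into an $L^2$ power---is indispensable; together these two ingredients are what make the dichotomy close and deliver the pathwise global bound.
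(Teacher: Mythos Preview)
Your proposal is correct and follows essentially the same route as the paper: an $L^2$ energy estimate on $v$, cancellation of the Hamiltonian terms up to a cross term with $z_\alpha$, and a dichotomy argument exploiting that the nonlinear damping of order $3\tilde{k}$ dominates the growth of order $2q+1$. The only cosmetic difference is in the bookkeeping: the paper keeps the weight $\|v+z_\alpha\|_{H^{s^-}}^{3\tilde{k}}$ intact and expands $(v+z_\alpha,v)$ as $\|v\|_{L^2}^2+(z_\alpha,v)$, absorbing the nonlinearity via Young's inequality with exponent $\beta=\tfrac{3\tilde{k}}{2q+1}$, so that the threshold is phrased directly on $\|v\|_{L^2}^2$; you instead convert the $H^{s^-}$ weight into an $L^2$ power via finite-dimensional equivalence and run the barrier on $\|v+z_\alpha\|_{L^2}$, which is equally valid.
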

\begin{proof}
    We calculate
    \begin{align*}
        \frac{d}{dt}\left(\frac{\lVert{v}\rVert_{L^2}^2}{2}\right)&=\left(i(\Delta v-P_N(|v+z_{\alpha}^\omega|^{2q}(v+z_{\alpha}^\omega)))-\alpha\left((-\Delta)^{s-1}v+ \lVert v+z_{\alpha}^\omega\rVert_{H^{s-}}^{3\tilde{k}}(v+z_{\alpha}^\omega)\right),v\right)\\
        &\leq \frac{\lVert{z_{\alpha}^\omega}\rVert_{L^2}^{1-\frac{1}{\beta}}}{C(\beta)}+C_1(\beta)\lVert v+z_{\alpha}^\omega\rVert_{H^{s-}}^{3\tilde{k}}-\alpha \lVert v\rVert_{H^{s-1}}^2-\alpha \lVert v+z_{\alpha}^\omega\rVert_{H^{s-}}^{3\tilde{k}}\lVert v \rVert_{L^2}^2\\
        &\quad \quad+\alpha \lVert v+z_{\alpha}^\omega\rVert_{H^{s-}}^{3\tilde{k}}\left(\frac{\lVert v\rVert_{L^2}^2}{2}+\frac{\lVert z_{\alpha}^\omega\rVert_{L^2}^2}{2}\right)~\text{with}~~\beta=\frac{3\tilde{k}}{2q+1}
        \\
        & \leq \frac{\lVert{z_{\alpha}^\omega}\rVert_{L^2}^{1-\frac{1}{\beta}}}{C(\beta)}+\frac{\alpha}{2}\lVert v+z_{\alpha}^\omega\rVert_{H^{s-}}^{3\tilde{k}}\left(C_2(\beta,\alpha)+\lVert z_{\alpha}^\omega\rVert_{L^2}^2-\lVert v\rVert_{L^2}^2\right).   
    \end{align*}
    \\
     We have that for $\mathbb{P}$-almost all $w\in \Omega$, for all $T$, , there is constant $C_{\alpha}(w,T)$ such that~~ $$\sup_{t\in [0,T]}\lVert z_{\alpha}(w,t)\rVert^2_{L^2}\leq C_{\alpha}(w,T).$$\\
    For a fixed $\omega$, let $t\in [0,T]$. We have the alternative:\\
    \begin{itemize}
        \item Either $\lVert v \rVert^2_{L^2}\leq C(\beta,\alpha)+C_{\alpha}(w,T),$\\
        \item or $\lVert v \rVert^2_{L^2}>C(\beta,\alpha)+C_{\alpha}(w,T). $
    \end{itemize}
    In the second case, we will have~~$ \lVert v+z_{\alpha}^w\rVert_{H^{s-}}^{3\tilde{k}}\left(C_2(\beta,\alpha)+\lVert z_{\alpha}^w\rVert_{L^2}^2-\lVert v\rVert_{L^2}^2\right)<0 $.
    So, in any case $$\frac{d}{dt}\left(\frac{\lVert{v}\rVert_{L^2}^2}{2}\right)\leq M_{\alpha,\omega,T,\beta}+\frac{\lVert{z_{\alpha}^w}\rVert_{L^2}^{1-\frac{1}{\beta}}}{C(\beta)}\leq C^1_{\alpha,\beta}(w,T).$$\\
    Hence $$\sup_{t\in[0,T]}\lVert{v}\rVert_{L^2}^2\leq \lVert{u_0}\rVert_{L^2}^2+C^2_{\alpha,\beta}(w,T).$$
    This is the needed control to perform an iteration of the local solution.
\end{proof}
\subsection*{Uniqueness and continuity} By using the fact that $F\in C^{\infty}(E_N,E_N)$ and the mean value theorem, it is easy to prove the uniqueness and continuity.\\
\subsection*{Adaptability}It is clear that the solution $z_{\alpha}(t)$ is adapted to $\mathcal{F}_t$ and since $v$ is constructed by using the fix point theorem, then $v$ also is adapted, therefore $u$ is adapted.\\
Let us denote by $u_{\alpha}(t,u_0)$ the unique stochastic solution to (\ref{eq3})

 \leavevmode
\subsection{The Markov semi-group and stationnary measure}
Let us define the transition probability $P_t^{\alpha,N}(u_0,\Gamma)=\mathbb{P}(u_{\alpha}(t,P_Nu_0)\in \Gamma)$ with $u_0\in L^2$ and $\Gamma \in \text{Bor}(L^2)$ .\\
The Markov semi-groups are given by:
\begin{align*}
    \mathcal{B}_t^{\alpha,N} f(u_0) &= \int_{L^2} f(v) P_t^{\alpha,N}(u_0, dv) \quad \quad C_b(L^2)\to C_b(L^2)\\
    \mathcal{B}_t^{\alpha,N *} \lambda(\Gamma) &= \int_{L^2} \lambda(du_0) P_t^{\alpha,N}(u_0, \Gamma) \quad \quad P(L^2) \to P(L^2).
\end{align*}
From the continuity of the solution with respect to initial data, we obtain the Feller property:
\[
\mathcal{B}_{t}^{\alpha,N} C_b(L^2) \subset C_b(L^2).
\]
\begin{proposition}
    Let $u_0$ be a random variable in $E_N$ independent of $\mathcal{F}_t$ such that $\mathbb{E}M(u_0)<\infty$ and $\mathbb{E}E(u_0)<\infty$. Let u be the solution to (\ref{eq3}) starting at $u_0$. Then we have
    \begin{align}\label{estimate1}
      \mathbb{E}M(u)+\alpha \int_0^t\mathbb{E}\mathcal{M}(u)d\tau &= \mathbb{E}M(u_0)+\alpha\frac{A^0_N}{2}t;\\ \label{estimate2}
      \mathbb{E}E(u)+\alpha \int_0^t\mathbb{E}\mathcal{E}(u)d\tau &\leq \mathbb{E}E(u_0)+\frac{\alpha}{2}\left(A^1_Nt+A^{\frac{d}{2}-\frac{1}{2}}_N\int_0^t\mathbb{E}\lVert{u}\rVert_{L^{2q}}^{2q}d\tau\right).
    \end{align}
\end{proposition}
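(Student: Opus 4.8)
The plan is to apply the finite-dimensional Itô formula to the functionals $M(u)$ and $E(u)$ along the flow of \eqref{eq3}, take expectations, and read off the two identities from the three resulting contributions: the drift, the stochastic (martingale) integral, and the Itô correction (trace) term. Since $N$ is fixed and the solution $u_{\alpha}(\cdot,u_0)$ constructed above lies in $C(\mathbb{R}^+;E_N)$ with finite second moments, Itô's formula applies; to justify that the stochastic integral has zero expectation I would first localize with the stopping times $\tau_R=\inf\{t:\lVert u(t)\rVert_{L^2}>R\}$, use that the stopped integral is a genuine martingale, and then let $R\to\infty$ with the a priori $L^2$-bound and monotone convergence.

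For the mass, write $M(u)=\tfrac12\lVert u\rVert_{L^2}^2$, so $\delta_uM=u$ and $M''(u;v,v)=\lVert v\rVert_{L^2}^2$. The Hamiltonian part of the drift, $i(\Delta u-P_N(|u|^{2q}u))$, is annihilated when paired with $u$: indeed $\langle u,i\Delta u\rangle=\mathrm{Im}\int_M u\,\overline{\Delta u}\,dx=0$ by self-adjointness of $\Delta$, and $\langle u,iP_N(|u|^{2q}u)\rangle=\langle u,i|u|^{2q}u\rangle=\mathrm{Im}\int_M|u|^{2q+2}\,dx=0$ using $u\in E_N$. The dissipative part contributes exactly $-\alpha\langle u,\mathcal{L}_s(u)\rangle=-\alpha\mathcal{M}(u)$, and since $\{e_n\}$ is $L^2$-orthonormal the Itô trace term is $\tfrac{\alpha}{2}\sum_{n=1}^N a_n^2\lVert e_n\rVert_{L^2}^2=\tfrac{\alpha}{2}A_N^0$. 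Integrating from $0$ to $t$ and taking expectations yields \eqref{estimate1}, which is an equality because no estimate is used.

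For the energy, $\delta_uE=-\Delta u+|u|^{2q}u$; the Hamiltonian drift $\langle\delta_uE,i(\Delta u-P_N(|u|^{2q}u))\rangle$ again vanishes, which is the infinitesimal form of energy conservation for the Galerkin NLS and follows by self-adjointness of $\Delta$ after removing $P_N$ on the terms lying in $E_N$. The dissipative part gives $-\alpha\langle\delta_uE,\mathcal{L}_s(u)\rangle=-\alpha\mathcal{E}(u)$. The trace term is $\tfrac{\alpha}{2}\sum_{n=1}^N a_n^2 E''(u;e_n,e_n)$: the kinetic contribution equals $\tfrac{\alpha}{2}\sum_n a_n^2\lVert\nabla e_n\rVert_{L^2}^2=\tfrac{\alpha}{2}A_N^1$, while for the potential part a direct computation gives
\begin{align*}
E''_{\mathrm{pot}}(u;e_n,e_n)=2q\int_M|u|^{2q-2}(\mathrm{Re}\,u)^2e_n^2\,dx+\int_M|u|^{2q}e_n^2\,dx\leq (2q+1)\int_M|u|^{2q}e_n^2\,dx.
\end{align*}
Summing against $a_n^2$ and invoking the standard eigenfunction sup-norm bound $\lVert e_n\rVert_{L^\infty}^2\lesssim\lambda_n^{(d-1)/2}=\lambda_n^{\frac{d}{2}-\frac12}$ yields $\sum_n a_n^2\int_M|u|^{2q}e_n^2\,dx\leq\big(\sum_n a_n^2\lVert e_n\rVert_{L^\infty}^2\big)\lVert u\rVert_{L^{2q}}^{2q}\lesssim A_N^{\frac{d}{2}-\frac12}\lVert u\rVert_{L^{2q}}^{2q}$, which after integrating and taking expectations produces the right-hand side of \eqref{estimate2}.

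The main obstacle is precisely the control of this potential Itô correction term: it is the only place where an inequality (rather than an equality) enters, and it is what forces the weight $A_N^{\frac{d}{2}-\frac12}$ and hence the standing assumption $A^{\frac{d}{2}-\frac12}<\infty$. Everything else — the cancellation of the Hamiltonian drift and the identification of the dissipative drift with $\mathcal{M}(u)$ and $\mathcal{E}(u)$ — reduces to the algebraic identities already encoded in the definitions of these dissipation rates, while the remaining care is the routine stopping-time argument guaranteeing that the stochastic integral is a mean-zero martingale.
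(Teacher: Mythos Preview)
Your proof is correct and follows essentially the same route as the paper: apply It\^o's formula to $M$ and $E$, use that the Hamiltonian drift pairs to zero, identify the dissipative drift with $-\alpha\mathcal{M}$ and $-\alpha\mathcal{E}$, and bound the potential It\^o correction via the eigenfunction sup-norm bound (which the paper invokes as ``Sogge's inequalities''). Your write-up is in fact more explicit than the paper's on two points --- the stopping-time justification for the vanishing expectation of the stochastic integral, and the computation $E''_{\mathrm{pot}}(u;e_n,e_n)\leq(2q+1)\int|u|^{2q}e_n^2$ --- but there is no substantive difference in approach.
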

\begin{proof}
    Let $F(u)$ be $M(u)$ or $E(u)$.
    We have by Ito's Formula,
\begin{align*}
         dF&=\langle \nabla_{{u}}F, du\rangle+\frac{\alpha}{2}\sum_{n=1}^Na^2_n\langle \nabla^2_uF;e_n,e_n\rangle dt\\
         &=\langle \nabla_{{u}}F, i(\Delta u -P_N(\abs{u}^{2q}u))\rangle-\alpha\langle \nabla_{{u}}F, (-\Delta)^{s-1}u+C_{d,s}\lVert u \rVert_{H^{s^-}}^{3\tilde{k}}u \rangle dt \\
         &~~~~~~~~~~\quad \quad+\sqrt{\alpha}\sum_{n=1}^Na_n\langle\nabla_{{u}}F, e_n\rangle d\mathcal{B}_n(t)
         +\frac{\alpha}{2}\sum_{n=1}^Na^2_n\langle\nabla_{u}^2F;e_n^2\rangle.
     \end{align*}
     For $F=M(u)$, we have
     $$M(u)=M(u_0)-\alpha\int_0^t\langle u,(-\Delta)^{s-1}u+C_{d,s}\lVert u \rVert_{H^{s^-}}^{3\tilde{k}}u \rangle d\tau +\sqrt{\alpha}\sum_{n=1}^Na_n\int_0^t\langle u,e_n\rangle d\mathcal{B}_n(\tau) +\frac{\alpha}{2}\sum_{n=1}^Na^2_nt.$$\\
     By taking the expectation and using the vanishing property of the martingale term, we obtain $$ \mathbb{E}M(u)+\alpha \int_0^t\mathbb{E}\mathcal{M}(u)d\tau = \mathbb{E}M(u_0)+\alpha\frac{A^0_N}{2}t. $$
     For \( F = E(u) \), we have
\begin{align*}
{dE}(u) \leq  \langle -\Delta u&+\abs{u}^{2q}u ~,~ i(\Delta u -P_N(\abs{u}^{2q}u))\rangle dt-\alpha \langle -\Delta u+\abs{u}^{2q}u,(-\Delta)^{s-1}u+C_{d,s}\lVert u \rVert_{H^{s^-}}^{3\tilde{k}}u \rangle dt \\
&+\sqrt{\alpha}\sum_{n=1}^Na_n\langle -\Delta u+\abs{u}^{2q}u, e_n\rangle d\mathcal{B}_n(t)
+\frac{\alpha}{2}\sum_{n=1}^Na^2_n(\langle -\Delta e_n,e_n\rangle+\langle \abs{u}^{2q};e_n,e_n\rangle)dt
\end{align*}
Thus, 
\[
{E}(u) \leq {E}(u_0) - \alpha \int_0^t \mathcal{E}(u) d\tau + \sqrt{\alpha}\int_0^t\sum_{n=1}^Na_n\langle -\Delta u+\abs{u}^{2q}u, e_n\rangle d\mathcal{B}_n(\tau)+\frac{\alpha}{2}\left(A^1_Nt+\sum_{n=1}^Na^2_n\lVert{e_n}\rVert^2_{L^{\infty}}\int_0^t\lVert{u}\rVert^{2q}_{L^{2q}}d\tau \right).
\]
Therefore, after using Sogge's inequalities \cite{sogge1988concerning}, we have
\[
\mathbb{E}E(u)+\alpha \int_0^t\mathbb{E}\mathcal{E}(u)d\tau \leq \mathbb{E}E(u_0)+\frac{\alpha}{2}\left(A^1_Nt+A^{\frac{d}{2}-\frac{1}{2}}_N\int_0^t\mathbb{E}\lVert{u}\rVert_{L^{2q}}^{2q}d\tau\right).
\] Hence the claim.
\end{proof}
\leavevmode
\subsection*{Existence of Stationnary measure}
\leavevmode
\begin{theorem}

For any \( N \geq 2 \) and \( \alpha \in (0,1) \), there is a stationary measure $\mu_N^{\alpha}$
and we have the following estimates:
\begin{align}\label{estimate3}
\int_{L^2} \mathcal{M}(u) \mu_{N}^{\alpha}(du) &=\frac{A_N^0}{2}\leq \frac{A^0}{2}\\ \label{estimate4}
\int_{L^2} \mathcal{E}(u) \mu_{N}^{\alpha}(du) &\leq C, 
\end{align}
where $C$ does not depend on $\alpha$ and $N$.

\end{theorem}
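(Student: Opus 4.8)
The plan is to obtain existence through the Bogoliubov--Krylov procedure and to read off the two moment bounds from stationarity combined with the balance laws \eqref{estimate1}--\eqref{estimate2}. First I would fix $N$ and $\alpha$ and run the Bogoliubov--Krylov argument for the Feller semigroup $\mathcal{B}_t^{\alpha,N}$, whose Feller property was recorded above. Starting from $u_0=0$, consider the time averages $\nu_T=\frac{1}{T}\int_0^T \mathcal{B}_t^{\alpha,N*}\delta_0\,dt$. Taking $F=M$ in \eqref{estimate1} with $u_0=0$ gives $\alpha\int_0^t\mathbb{E}\mathcal{M}(u)\,d\tau=\alpha\frac{A_N^0}{2}t$, whence $\int_{L^2}\mathcal{M}(u)\,\nu_T(du)\le \frac{A_N^0}{2}$ for every $T$. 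Since $\mathcal{M}(u)\ge \lVert u\rVert_{H^{s-1}}^2$ is coercive on the finite-dimensional space $E_N$, the family $\{\nu_T\}_{T>0}$ is tight, and any weak limit point $\mu_N^\alpha$ along a sequence $T_j\to\infty$ is a stationary measure by the Feller property. Finite-dimensionality together with the Lyapunov bound guarantees that $\mu_N^\alpha$ has finite mass and energy, so the balance laws may legitimately be applied to the stationary process.

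Next I would take $u_0$ distributed according to $\mu_N^\alpha$, so that $u(t)\sim\mu_N^\alpha$ for all $t$. Inserting stationarity $\mathbb{E}M(u(t))=\mathbb{E}M(u_0)$ into \eqref{estimate1} leaves $\alpha\int_0^t\mathbb{E}\mathcal{M}(u)\,d\tau=\alpha\frac{A_N^0}{2}t$; since $\tau\mapsto\mathbb{E}\mathcal{M}(u(\tau))$ is constant, dividing by $\alpha t$ yields $\int\mathcal{M}\,d\mu_N^\alpha=\frac{A_N^0}{2}$, and $A_N^0=\sum_{n\le N}a_n^2\le A^0$ gives \eqref{estimate3}.

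The same stationarity applied to \eqref{estimate2}, after dividing by $\alpha t$, gives $\int\mathcal{E}\,d\mu_N^\alpha\le \tfrac12 A_N^1 + \tfrac12 A_N^{\frac d2-\frac12}\int\lVert u\rVert_{L^{2q}}^{2q}\,d\mu_N^\alpha$, and the heart of the matter is a uniform (in $\alpha,N$) bound on $\int\lVert u\rVert_{L^{2q}}^{2q}\,d\mu_N^\alpha$. Interpolating $L^{2q}$ between $L^2$ and $L^{2q+2}$ and applying Young's inequality yields, for every $\epsilon>0$, the pointwise estimate $\lVert u\rVert_{L^{2q}}^{2q}\le \epsilon\lVert u\rVert_{L^{2q+2}}^{2q+2}+C_\epsilon\lVert u\rVert_{L^2}^2$. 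I would control the two contributions by different conserved budgets. A short case distinction according as $\lVert u\rVert_{H^{s^-}}\ge 1$ or $<1$, using the relevant Sobolev embedding $H^{s^-}\hookrightarrow L^{2q+2}$ (valid throughout the regimes considered), gives $\lVert u\rVert_{L^{2q+2}}^{2q+2}\lesssim \mathcal{E}_0(u)$; the same dichotomy applied to the mass density gives $\lVert u\rVert_{L^2}^2\lesssim \mathcal{M}(u)+1$. Integrating and invoking the coercivity \eqref{Coercive_E} ($\mathcal{E}_0\le\mathcal{E}+K_{s,d}$) together with the already-established \eqref{estimate3} produces $\int\lVert u\rVert_{L^{2q}}^{2q}\,d\mu_N^\alpha\le \epsilon C'\big(\int\mathcal{E}\,d\mu_N^\alpha+K_{s,d}\big)+C_\epsilon\big(\tfrac{A^0}{2C_{d,s}}+1\big)$. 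Substituting this into the energy inequality and choosing $\epsilon$ small enough that $\tfrac12 A^{\frac d2-\frac12}\epsilon C'\le \tfrac12$, I absorb the energy term into the left-hand side, which is legitimate because $\int\mathcal{E}\,d\mu_N^\alpha<\infty$ for fixed $N$. Using finally $A_N^1\le A^1<\infty$ and $A_N^{\frac d2-\frac12}\le A^{\frac d2-\frac12}<\infty$ yields \eqref{estimate4} with a constant $C$ independent of $\alpha$ and $N$.

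The main obstacle is precisely this last uniform bound: the term $\int\lVert u\rVert_{L^{2q}}^{2q}$ carries a power $2q$ that cannot be controlled by $\lVert u\rVert_{H^s}^2$ alone once $q$ is large, and it is exactly here that the superlinear weight $\lVert u\rVert_{H^{s^-}}^{3\tilde{k}}$ built into $\mathcal{L}_s$ becomes indispensable, since the coercivity \eqref{Coercive_E} of $\mathcal{E}_0$ is what makes the Young-inequality absorption possible with constants independent of the approximation parameters. A secondary technical point is justifying that the It\^o balance laws apply to the stationary process, i.e. that $\mu_N^\alpha$ has finite mass and energy; this is handled by the finite-dimensional Lyapunov estimate, if necessary through a stopping-time truncation followed by Fatou's lemma.
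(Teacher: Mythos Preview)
Your argument follows exactly the paper's route: existence via Bogoliubov--Krylov starting from $\delta_0$, tightness from the coercivity of $\mathcal{M}$, and then the two moment bounds by inserting stationarity into the balance laws \eqref{estimate1}--\eqref{estimate2}. You are in fact considerably more explicit than the paper on the one nontrivial point, namely closing \eqref{estimate4} uniformly in $(\alpha,N)$: the paper simply asserts the conclusion after invoking \eqref{estimate2} and invariance, without spelling out how the right-hand term $\int\lVert u\rVert_{L^{2q}}^{2q}\,d\mu_N^\alpha$ is controlled independently of the parameters. Your interpolation of $L^{2q}$ between $L^2$ and $L^{2q+2}$, followed by the dichotomy on $\lVert u\rVert_{H^{s^-}}$ and absorption into $\mathcal{E}_0$, supplies the missing mechanism and correctly identifies why the nonlinear weight $\lVert u\rVert_{H^{s^-}}^{3\tilde{k}}$ built into $\mathcal{L}_s$ is indispensable here.

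One small caveat on your dichotomy: the embedding $H^{s^-}\hookrightarrow L^{2q+2}$ you invoke in the case $\lVert u\rVert_{H^{s^-}}<1$ requires $s>\tfrac{d}{2}-\tfrac{d}{2q+2}$, which holds throughout except possibly in the narrow window $d=3$, torus, $q\in[5/3,2)$, where $s$ may lie just above $s_{q,d}$ but below $\tfrac{d}{2}-\tfrac{d}{2q+2}$. The paper does not address this either; one can patch it, for instance, by running the dichotomy on $\lVert u\rVert_{L^{2q+2}}$ rather than on $\lVert u\rVert_{H^{s^-}}$ (if $\lVert u\rVert_{L^{2q+2}}<1$ the term is trivially bounded, and if $\lVert u\rVert_{L^{2q+2}}\ge 1$ one bounds $\lVert u\rVert_{L^{2q+2}}^{2q+2}$ by $\lVert u\rVert_{L^{2q+2}}^{2q+2+3\tilde{k}}\lesssim \lVert u\rVert_{H^{s^-}}^{3\tilde{k}}\lVert u\rVert_{L^{2q+2}}^{2q+2}$ using $L^{2q+2}\hookleftarrow H^{s^-}$ in the reverse direction once $s^->s_{q,d}$ forces $H^{s^-}\hookrightarrow L^{2q}$, or more simply by interpolating $L^{2q+2}$ between $L^{2q}$ and $H^s$). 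This is a technical detail, not a flaw in the strategy.
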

\begin{proof}
Let $B_R$ be the ball of $H^{s-1}$ with center $0$ and raduis $R.$ By using the Bogoliubov-Krylov argument, we have to find the measure $\lambda$ such that the sequence $\{\Bar{\lambda_t}\}_{t\geq 0}$ is tight, where $$\Bar{\lambda_t}=\frac{1}{t}\int_0^t\mathcal{B}_{\tau}^{\alpha,N,*}\lambda ~d\tau.$$
By Prokhorov theorem, Let $R>0$, we just need to find a compact $K$ of $L^2$ such that $\bar{\lambda}_t(K^c)<\epsilon(R)\to 0$ as $R\to\infty$.\\
We have 
\begin{align*}
    \bar{\lambda}_t(B_R^c)&=\frac{1}{t}\int_0^t\mathcal{B}_{\tau}^{\alpha,N,*}\lambda(B_R^c) ~d\tau =\int_{L^2}\lambda(du_0)\frac{1}{t}\int_0^t P_{\tau}^{\alpha,N}(u_0, B_R^c)d\tau\\
   &\leq \frac{1}{t}\int_{L^2}\lambda(du_0)\int_0^t \frac{\mathbb{E}\lVert u_{\alpha,N}(\tau,u_0) \rVert_{H^{s-1}}^2}{R^2}d\tau ~~~~\text{(according Chebyshev's inequality)}\\
   &\leq\int_{L^2}\lambda(du_0)\frac{\mathbb{E}M(u_0)+\frac{\alpha A^0_N t}{2}}{\alpha t R^2}     \leq \frac{1}{R^2}\left( \frac{\mathbb{E}_{\lambda}\lVert{u_0}\rVert_{L^2}^2}{\alpha t}+\frac{A^0_N}{2} \right).
\end{align*}
Let us choose $\lambda$ the Dirac measure concentrated at 0, $\delta_0$. Then, we obtain $\bar{\lambda}_t(B_R^c)\leq \frac{A^0_N}{2R^2}$, and indeed $B_R$ is compact of $L^2$.\\
Therefore, we have the tightness of $\{\Bar{\lambda_t}\}_{t\geq 0}$, then there is subsequence $ \{t_n\}_{n\in \mathbb{N}}\subset \tau_+$ and measure $\mu_N^{\alpha}$  such that: $\bar{\lambda}_{t_n} \to \mu_N^{\alpha}$ weakly in $P(L^2)$> The classical Bogoliubov-Krylov argument shows that $\mu_N^{\alpha}$ is invariant.\\
\subsection*{Estimates} Let us define $\chi \in C^{\infty}$ be a function such that
\[
\chi(x) =
\begin{cases}
1 & \text{for } x \in [0,1], \\
0 & \text{for } x \in [2,\infty).
\end{cases}
\] and $\chi_R(x)=\chi(\frac{x}{R})$.\\
Since $u \to \mathcal{M}(u)\chi_R(\lVert{u}\rVert_{L^2})$ is continuous and bounded on $L^2$, then we have \\
$$\int_{L^2}\mathcal{M}(u)\chi_R(\lVert{u}\rVert_{L^2}) \bar{\lambda}_{t_n}(du)\leq \int_{L^2}\mathcal{M}(u)\bar{\lambda}_{t_n}(du)=\frac{1}{t_n}\int_0^{t_n}\mathbb{E}\mathcal{M}(u) d\tau\leq \frac{A^0_N}{2}.$$\\
By passing to the limit $t_n \to \infty,R\to \infty$ and using the Fatou's lemma, we obtain: $$\int_{L^2} \mathcal{M}(u) \mu_{N}^{\alpha}(du) \leq \frac{A_N^0}{2},$$ then $$\int_{L^2}\lVert u \rVert_{H^{s-1}}^2 \mu_{N}^{\alpha}(du)\leq \frac{A_N^0}{2}.$$\\
Thus $$\int_{L^2}\lVert{u}\rVert_{L^2}^2 \mu_{N}^{\alpha}(du)\leq \frac{A_N^0}{2} \quad \quad\text{so} ~~~~~\mathbb{E}M(u_0)<\infty.$$
Therefore using the (\ref{estimate1}), and the fact that $\mu_{N}^{\alpha}$ is invariant,\\ we obtain the claim: $$\int_{L^2} \mathcal{M}(u) \mu_{N}^{\alpha}(du) =\frac{A_N^0}{2}\leq \frac{A^0}{2}.$$\\
We have in particular $$\int_{L^2}\lVert{u}\rVert_{L^2}^q \mu_{N}^{\alpha}(du)\lesssim\frac{A_N^0}{2}, \quad \forall q\geq 1.$$\\
Thus by using the fact that we are in finite dimensional, we will have: $$\int_{L^2}E(u)\mu_{N}^{\alpha}(du) <\infty \quad \quad \text{so} ~~~\mathbb{E}E(u_0)<\infty.$$\\
Therefore by using (\ref{estimate2}) and the invariant measure,\\ we will have the claim :$$\int_{L^2} \mathcal{E}(u) \mu_{N}^{\alpha}(du) \leq C.$$
\end{proof}
\begin{proposition}
    For any $R>0$, we have
    \begin{align}\label{estimate5}
    \int_{L^2} \mathcal{M}(u)(1-\chi_R(\lVert{u}\rVert_{L^2}^2)) \mu_{N}^{\alpha}(du) \leq C_1R^{-1} 
    \end{align}
    where $C_1$ is independent of $\alpha,N.$
\end{proposition}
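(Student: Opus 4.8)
The plan is to reduce \eqref{estimate5} to a single weighted moment bound that is uniform in $\alpha$ and $N$, and then to produce that bound from a stationarity identity for the squared mass. The key point is that $\mathcal{M}$ already contains the factor $\lVert u\rVert_{L^2}^2$, so a gain of one extra power of $\lVert u\rVert_{L^2}^2$ translates the tail cut-off into the prefactor $R^{-1}$.

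First I would exploit the support of the cut-off. Since $\chi_R(x)=\chi(x/R)$ equals $1$ for $x\le R$, the factor $1-\chi_R(\lVert u\rVert_{L^2}^2)$ is supported in $\{\lVert u\rVert_{L^2}^2\ge R\}$ and is bounded by $1$; there one has $1\le R^{-1}\lVert u\rVert_{L^2}^2$. As $\mathcal{M}(u)\ge 0$, this yields the pointwise bound $\mathcal{M}(u)\big(1-\chi_R(\lVert u\rVert_{L^2}^2)\big)\le R^{-1}\lVert u\rVert_{L^2}^2\,\mathcal{M}(u)$, so it suffices to show
\begin{align}
\int_{L^2}\lVert u\rVert_{L^2}^2\,\mathcal{M}(u)\,\mu_N^\alpha(du)\le C_1,\qquad C_1\ \text{independent of}\ \alpha,N.
\end{align}

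The heart of the argument is to generate this weighted dissipation from the invariance of $\mu_N^\alpha$. I would apply Itô's formula to $F(u)=M(u)^2$ along \eqref{eq3}, exactly as in the proof of the mass balance \eqref{estimate1}. Using the Hamiltonian cancellation $\big(u,i(\Delta u-P_N(|u|^{2q}u))\big)=0$, the identity $\langle u,\mathcal{L}_s(u)\rangle=\mathcal{M}(u)$, and the Itô correction $\tfrac{\alpha}{2}A_N^0$ arising in $dM$, the drift of $dF$ equals $\alpha\big(-2M(u)\mathcal{M}(u)+A_N^0 M(u)+\sum_{n=1}^N a_n^2(u,e_n)^2\big)$. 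Testing against the stationary measure makes the expected drift vanish, which gives
\begin{align}
2\int_{L^2} M(u)\mathcal{M}(u)\,\mu_N^\alpha(du)=A_N^0\int_{L^2} M(u)\,\mu_N^\alpha(du)+\int_{L^2}\sum_{n=1}^N a_n^2(u,e_n)^2\,\mu_N^\alpha(du).
\end{align}
It then remains to bound the right-hand side uniformly. The proof of the previous theorem already yields $\int_{L^2}\lVert u\rVert_{L^2}^2\,\mu_N^\alpha(du)\le A_N^0/2\le A^0/2$, whence $\int M\,d\mu_N^\alpha\le A^0/4$. Moreover $\sum_{n=1}^N a_n^2(u,e_n)^2\le(\sup_n a_n^2)\lVert u\rVert_{L^2}^2\le A^0\lVert u\rVert_{L^2}^2$, so its integral is at most $(A^0)^2/2$. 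Using $A_N^0\le A^0$ and $M(u)=\tfrac12\lVert u\rVert_{L^2}^2$, the identity gives $\int\lVert u\rVert_{L^2}^2\mathcal{M}(u)\,d\mu_N^\alpha=2\int M\mathcal{M}\,d\mu_N^\alpha\le\tfrac34(A^0)^2$, proving \eqref{estimate5} with $C_1=\tfrac34(A^0)^2$.

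I expect the main obstacle to be precisely the uniform control of $\int\lVert u\rVert_{L^2}^2\mathcal{M}(u)\,d\mu_N^\alpha$: a direct estimate entangles the $H^{s-1}$, $H^{s^-}$ and $L^2$ norms hidden in $\mathcal{M}$ and, through finite-dimensional norm equivalence, would only produce $N$-dependent constants. The stationarity identity for $M^2$ circumvents this by trading the awkward weighted dissipation for the two already-controlled quantities $\int\lVert u\rVert_{L^2}^2\,d\mu_N^\alpha$ and $\int\sum a_n^2(u,e_n)^2\,d\mu_N^\alpha$. A minor point to verify is that $F=M^2$ and its generator are $\mu_N^\alpha$-integrable, so that the stationarity identity is rigorous; this is automatic in finite dimensions, where $\mu_N^\alpha$ has finite moments of all orders.
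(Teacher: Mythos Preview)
Your proof is correct, but it follows a different route from the paper's. The paper applies It\^o's formula directly to the truncated functional $F_R(u)=\lVert u\rVert_{L^2}^2(1-\chi_R(\lVert u\rVert_{L^2}^2))$ and then uses the stationarity of $\mu_N^\alpha$; the derivatives of $\chi_R$ produce terms of size $O(R^{-1})$, and the residual term $A_N^0\,\mathbb{E}(1-\chi_R(\lVert u\rVert_{L^2}^2))$ is handled by Markov's inequality combined with \eqref{estimate3}. You instead first use the pointwise Chebyshev bound $1-\chi_R(\lVert u\rVert_{L^2}^2)\le R^{-1}\lVert u\rVert_{L^2}^2$ on the support of the cut-off, reducing the task to a uniform bound on $\int\lVert u\rVert_{L^2}^2\mathcal{M}(u)\,d\mu_N^\alpha$, and then obtain that bound from the stationarity identity for $M(u)^2$. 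Your approach yields a cleaner It\^o computation (no cut-off derivatives) and an explicit constant $C_1=\tfrac{3}{4}(A^0)^2$; the paper's choice of a bounded test functional has the minor advantage that the martingale term is automatically a true martingale and no moment justification is needed, though as you note this is harmless here since we are in finite dimensions with a dissipative drift.
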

\begin{proof}
    Consider the functional
     $$F_R(u)=\lVert{u}\rVert_{L^2}^2(1-\chi_R(\lVert{u}\rVert_{L^2}^2)).$$
  Applying the Ito formula, we have: 
    \begin{align*}
dF_R + 2\alpha \mathcal{M}(u)(1 - \chi_R (\lVert{u}\rVert_{L^2}^2)) = -2\alpha \mathcal{M}(u)\chi'_R (\lVert{u}\rVert_{L^2}^2) &+ \frac{\alpha}{2} (1 - \chi_R(\lVert{u}\rVert_{L^2}^2)) A_N^0 + 2\chi'_R(\lVert{u}\rVert_{L^2}^2) \sum_{n=0}^Na_n^2(u, e_n)^2\\
&+\frac{\alpha}{2}\lVert{u}\rVert_{L^2}^2\left[ A^0_N\chi'_R(\lVert{u}\rVert_{L^2}^2)+\chi_R''(\lVert{u}\rVert_{L^2}^2)\sum_{n=0}^N a_n^2(u,e_n)^2 \right].
\end{align*}
Using the invariance and (\ref{estimate3}), we obtain:
\[
\mathbb{E}\mathcal{M}(u)(1 - \chi_R(\lVert{u}\rVert_{L^2}^2)) \leq A_N^0 \mathbb{E}(1 - \chi_R(\lVert{u}\rVert_{L^2}^2)) + \frac{C(A^0)}{R}.
\]
Now the Markov inequality and the estimate (\ref{estimate3}) imply
\[
\int_{L^2} (1 - \chi_R(\lVert{u}\rVert_{L^2}^2)) \mu_N^{\alpha} \leq C R^{-1},
\]
 where $ C$ is independent of $(\alpha, N)$.
Overall, we have:
\[
\int_{L^2} \mathcal{M}(u)(1 - \chi_R(\lVert{u}\rVert_{L^2}^2)) \mu_N^{\alpha} \leq C_1 R^{-1},
\]
which is the claim.
\end{proof}

\leavevmode
\subsection{Inviscid limit}
\leavevmode
\begin{proposition}
    Let $N\geq 2$, there exist a measure $\mu_N$ that is invariant under the flow $\phi^t_N$ and satisfies the following estimates:
    \begin{align}\label{estimate6}
\int_{L^2} \mathcal{M}(u) \mu_{N}(du) =\frac{A_N^0}{2}\leq &\frac{A^0}{2}\\ \label{estimate7}
\int_{L^2} \|u\|_{H^s}^2+C_{d,s}\lVert u\rVert_{H^{s^-}}^{3\tilde{k}}(\lVert u\rVert_{H^1}^2+\lVert u\rVert_{L^{2q+2}}^{2q+2})+\frac{1}{q+1}\lVert (-\Delta)^{\frac{s-1}{2}}|u|^{q+1}\lVert^2_{L^2} \mu_N(du) &\leq C;\quad\text{for}\quad{s_{M^d}<s\leq2}  \\ \label{estimate7'}
\int_{L^2} \frac{1}{2}\lVert u\rVert_{H^s}^2+C_{d,s}\lVert u\rVert_{H^{s^-}}^{3\tilde{k}}\lVert u\rVert_{L^{2q+2}}^{2q+2}+ \frac{C_{d,s}}{2}\lVert u\rVert_{L^2}^2\lVert u\rVert_{H^{s^-}}^{3\tilde{k}}\mu_N(du)&\leq C+K;\quad\text{for}\quad\max({s_{M^d},2})<s
    \\ \label{estimate8}
\int_{L^2} \mathcal{M}(u)(1-\chi_R(\lVert{u}\rVert_{L^2}^2)) \mu_{N}(du) &\leq CR^{-1}.
\end{align}
\end{proposition}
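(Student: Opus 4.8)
The plan is to fix $N\geq 2$ and send the viscosity/noise parameter $\alpha\to 0$ along the family $\{\mu_N^\alpha\}_{\alpha\in(0,1)}$ of stationary measures produced in the previous theorem, extracting a weak limit $\mu_N$ and checking that it is invariant for the Hamiltonian flow $\phi^t_N$ while inheriting the uniform bounds \eqref{estimate3}--\eqref{estimate5}. Everything takes place on the finite-dimensional space $E_N$, which is what makes the compactness and the flow convergence elementary.

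First I would establish tightness. By \eqref{estimate3} we have $\int_{L^2}\|u\|_{H^{s-1}}^2\,\mu_N^\alpha(du)\leq A_N^0/2$ uniformly in $\alpha$; since on $E_N$ all Sobolev norms are equivalent, Chebyshev's inequality gives $\mu_N^\alpha(B_R^c)\leq C_N R^{-2}$ with $C_N$ independent of $\alpha$, so $\{\mu_N^\alpha\}_{\alpha\in(0,1)}$ is tight in $P(L^2)$. By Prokhorov's theorem there exist a sequence $\alpha_k\to 0$ and a probability measure $\mu_N$ with $\mu_N^{\alpha_k}\rightharpoonup\mu_N$.

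The main step, and the main obstacle, is the invariance of $\mu_N$ under $\phi^t_N$. The point is that as $\alpha\to 0$ both the dissipation term (of order $\alpha$) and the stochastic forcing (of order $\sqrt\alpha$) in \eqref{eq3} disappear, so the solution $u_\alpha(t,u_0)$ should converge to the deterministic Galerkin flow $\phi^t_N u_0$. On $E_N$ the drift is smooth and the a priori $L^2$-bound obtained in the global existence argument provides uniform control, so subtracting the two equations and applying Gronwall's lemma, after estimating the martingale increment by Doob's inequality (which carries the vanishing factor $\sqrt\alpha$), yields, for every fixed $u_0$ and every $T>0$,
\[
\mathbb{E}\sup_{|t|\leq T}\|u_\alpha(t,u_0)-\phi^t_N u_0\|_{L^2}\xrightarrow[\alpha\to 0]{}0,
\]
locally uniformly in $u_0$. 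Consequently, for $f\in C_b(L^2)$ the Koopman operators converge, $\mathcal{B}_t^{\alpha,N}f(u_0)\to f(\phi^t_N u_0)$, locally uniformly on $E_N$. I would then pass to the limit in the stationarity identity $\int \mathcal{B}_t^{\alpha_k,N}f\,d\mu_N^{\alpha_k}=\int f\,d\mu_N^{\alpha_k}$: writing the left side as $\int(\mathcal{B}_t^{\alpha_k,N}f-f\circ\phi^t_N)\,d\mu_N^{\alpha_k}+\int f\circ\phi^t_N\,d\mu_N^{\alpha_k}$, the first term tends to $0$ by the local uniform convergence combined with the tightness of $\{\mu_N^{\alpha_k}\}$ (the contribution outside a large ball being uniformly small, $f$ being bounded), and the second converges by weak convergence since $f\circ\phi^t_N\in C_b(L^2)$ by the Feller/continuity property of $\phi^t_N$. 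This gives $\int f\circ\phi^t_N\,d\mu_N=\int f\,d\mu_N$ for all $t$ and all $f\in C_b(L^2)$, i.e. invariance.

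It remains to transfer the quantitative bounds. Each integrand in \eqref{estimate7}, \eqref{estimate7'} and \eqref{estimate8} is nonnegative and continuous, hence lower semicontinuous, on $E_N$, so the portmanteau inequality for nonnegative lower semicontinuous functions gives $\int g\,d\mu_N\leq\liminf_k\int g\,d\mu_N^{\alpha_k}$. For \eqref{estimate7'} (the regime $s>2$) I would combine this with the coercivity relation $\mathcal{E}_0(u)\leq\mathcal{E}(u)+K_{s,d}$ from \eqref{Coercive_E} and the uniform energy bound \eqref{estimate4}, obtaining $\int\mathcal{E}_0\,d\mu_N\leq\liminf_k\int\mathcal{E}_0\,d\mu_N^{\alpha_k}\leq C+K$; in the regime $s\leq 2$ the integrand of \eqref{estimate7} is dominated pointwise by $\mathcal{E}(u)$ (by the $s\leq 2$ estimate of the dissipation section), so the same semicontinuity and \eqref{estimate4} yield the bound $C$. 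For \eqref{estimate8} the semicontinuity applied to $\mathcal{M}(u)(1-\chi_R(\|u\|_{L^2}^2))$ together with \eqref{estimate5} gives the bound $CR^{-1}$. Finally, for the equality \eqref{estimate6}, semicontinuity only yields $\int\mathcal{M}\,d\mu_N\leq A_N^0/2$; to recover equality I would upgrade the weak convergence to convergence of this integral using the uniform-in-$\alpha$ higher-moment bounds on $\|u\|_{L^2}$ established in the previous theorem, which provide the uniform integrability of $\mathcal{M}$ needed to pass to the limit in the exact identity \eqref{estimate3}.
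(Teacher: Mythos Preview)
Your argument is correct and follows the same strategy as the paper: tightness from the uniform moment bound \eqref{estimate3}, Prokhorov to extract $\mu_N$, invariance by showing that the stochastic semigroup converges to the deterministic one locally uniformly on $E_N$ and combining this with tightness (the paper carries this out via the splitting $A_1,A_2,A_3,A_4$ and the auxiliary event $S_r$, which is a concrete implementation of your Doob/Gronwall sketch), and finally the estimates via lower semicontinuity.

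The one point where the paper proceeds slightly differently is the \emph{equality} in \eqref{estimate6}. You invoke ``higher-moment bounds on $\|u\|_{L^2}$'' for uniform integrability of $\mathcal{M}$; the paper instead uses the tail estimate \eqref{estimate5} directly, via the sandwich
\[
\frac{A_N^0}{2}-C_1R^{-1}\;\leq\;\int_{L^2}\chi_R(\|u\|_{L^2}^2)\,\mathcal{M}(u)\,\mu_N^{k}(du)\;\leq\;\frac{A_N^0}{2},
\]
passes to the weak limit (the truncated integrand being bounded and continuous on $E_N$), and then lets $R\to\infty$. This is exactly the uniform integrability you need, but note that it is provided by \eqref{estimate5} itself rather than by any separate moment bound; your reference to ``higher-moment bounds established in the previous theorem'' should really point to \eqref{estimate5}.
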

\begin{proof}
\leavevmode
\subsection*{Existence}
Thanks to the estimate (\ref{estimate3}), we have the weak compactness of any sequence $(\mu_N^{\alpha} )_{\alpha\in (0,1)}$ with respect to the topology of $L^2$, therefore there exists a subsequence $(\mu_N^{\alpha_k}:= \mu_N^{k})$ , converging to a measure $\mu_N$ on $L^2$. 
\subsection*{Estimates}
Since $\mu_N^\alpha$ and $ \mu_N $ are supported on $E_N$ and we are actually working on a finite dimensional space, then the functions $\mathcal{M}, \mathcal{E}$ are continuous. Hence, the estimates (\ref{estimate7}),\eqref{estimate7'} and (\ref{estimate8}) follow respectively from (\ref{estimate4}) and (\ref{estimate5}), the lower semicontinuity of $\mathcal{M}(u)$ and $\mathcal{E}(u)$.  ~~Let us prove the estimate (\ref{estimate6}), we have 
\[
\frac{A_N^0}{2} - \int_{L^2} (1 - \chi_R(\lVert{u}\rVert_{L^2}^2)) \mathcal{M}(u) \mu_{N}^k (du) \leq \int_{L^2} \chi_R(\lVert{u}\rVert_{L^2}^2) \mathcal{M}(u) \mu_{N}^k (du) \leq \frac{A_N^0}{2}.
\]
Now, by using (\ref{estimate5}), we will have
\[
\frac{A_N^0}{2} - C_1 R^{-1} \leq \int_{L^2} \chi_R(\lVert{u}\rVert_{L^2}^2) \mathcal{M}(u) \mu_{N}^k (du) \leq \frac{A_N^0}{2}.
\]
By letting $k\to \infty$ and $R\to \infty$, we arrive at the claim  
$\int_{L^2} \mathcal{M}(u) \mu_{N}(du) =\frac{A_N^0}{2}\leq \frac{A^0}{2}.$

\subsection*{Invariance}
It is enough to show only the invariance under $\phi^t_N$ for $t>0$; because for $t<0$, we have by using the invariance for positives times, 
$\mu_N(\Gamma)=\mu_N(\phi^t_N\Gamma)=\mu_N(\phi^{2t}_N\phi_N^{-t}\Gamma)=\mu_N(\phi_N^{-t}\Gamma)$ which is what we wanted to show.\\
Now the proof of the invariance for positives times is summarized in the following diagram
\[
\begin{tikzcd}
\mathcal{B}^{k,N*}_t \mu_{N}^k \arrow[r,equal, "\text{(I)}"] \arrow[d, "\text{(III)}"'] & \mu_{N}^k \arrow[d, "\text{(II)}"] \\
\Phi_N^{t*} \mu_N \arrow[r,equal, "\text{(IV)}"'] & \mu_N
\end{tikzcd}
\]
The equality (I) represents the invariance of the measure $\mu_N^k$ under $\mathcal{B}_t^{k,N*}$, the convergence (II) represents the weak convergence of $\mu_N^k$ towards $\mu_N$, the equality (IV) represents the claimed invariance of  $\mu_N$ under $\phi_N^t$ which will follow once we prove the convergence (III) in the weakly topology of $L^2$.

We are going now to prove the convergence (III). For that, let $f: L^2 \to \mathbb{R}$ be a lipschitz function that is also bounded by $1$, we have \\
\begin{align*}
    (\mathcal{B}_t^{k,N*}\mu_N^k,f)-(\phi_N^{t*}\mu_N,f)&=(\mu_N^k,\mathcal{B}_t^{k,N}f)-(\mu_N,\phi_N^tf)\\
    &=(\mu_N^k,\mathcal{B}_t^{k,N}f-\phi_N^tf)-(\mu_N-\mu_N^k,\phi_N^tf)\\
    &= A_1-A_2.
\end{align*}
By using the fact that $\phi_N^t$ is Feller, we see that $A_2 \to 0$ as $k\to \infty$.\\
On the other hand, by using the boundedness property of $f$, we have \\
$$\abs{A_1}\leq \int_{B_R(L^2)}\abs{\phi_N^tf(u_0)-\mathcal{B}_t^{k,N}f(u_0)}\mu_N^k(du_0)+2\mu_N^k(L^2\setminus B_R(L^2))=A_3+A_4.$$
We have 
\begin{align*}
    A_3&= \int_{B_R(L^2)}\left|\left(\int_{L^2}f(v)P_t^{k,N}(u_0,dv)\right)-\phi_N^tf(u_0) \right|\mu_N^k(du_0)\\
    &=\int_{B_R(L^2)}\left|\left( \int_{\Omega}f(u_k(t,P_Nu_0))d\mathbb{P}\right)- f(\phi_N^tP_Nu_0) \right|\mu_N^k(du_0)\\
    &\leq \int_{B_R(L^2)}\left( \int_{\Omega}\left|(f(u_k(t,P_Nu_0))- f(\phi_N^tP_Nu_0))\right|d\mathbb{P}\right) \mu_N^k(du_0)\\
    &\leq  \int_{B_R(L^2)}\left( \int_{S_r}\left|(f(u_k(t,P_Nu_0))- f(\phi_N^tP_Nu_0))\right|d\mathbb{P}\right) \mu_N^k(du_0)\\
    &\quad \quad+ \int_{B_R(L^2)}\left( \int_{S_r^c}\left|(f(u_k(t,P_Nu_0))- f(\phi_N^tP_Nu_0))\right|d\mathbb{P}\right) \mu_N^k(du_0)\\
    &\leq C_f\int_{B_R(L^2)}\mathbb{E}\left( \lVert u_k(t,P_Nu_0)-\phi_N^tP_Nu_0\rVert_{L^2}\mathbf{1}_{S_r}\right)\mu_N^k(du_0)+2\int_{B_R(L^2)}\mathbb{E}\mathbf{1}_{S_r^c}~~\mu_N^k(du_0)
\end{align*}
by using the Lipschitz property.\\
Now let us consider for $r>0$,
$$ S_r=\left\{ \omega\in \Omega|\max\left(\left|\sqrt{\alpha_k}\sum_{n=0}^Na_n\int_0^t(u,e_n)d\mathcal{B}_n(\tau) \right|,\lVert z_k\rVert_{L^2}\right)\leq r\sqrt{\alpha_k t} \right\}.$$
Let us compute $\mathbb{E} \mathbf{1}_{S_r^c}$. We have \\
$$\mathbb{E}\left|\sqrt{\alpha_k}\sum_{n=0}^Na_n\int_0^t(u,e_n)d\mathcal{B}_n(\tau) \right|^2=\alpha_k\sum_{n=0}^Na_n^2\int_0^t\mathbb{E} (u,e_n)^2d\tau\leq \alpha_kt A_0\mathbb{E}\lVert u\rVert_{L^2}^2\leq C\alpha_kt .$$\\
We have also $$\mathbb{E}\lVert z_k\rVert^2_{L^2}\leq C\alpha_kt. $$\\
According to the Chebyshev inequality, we have:\\
$$\mathbb{E} \mathbf{1}_{S_r^c}=\mathbb{P}\left\{ w| \max\left(\left|\sqrt{\alpha_k}\sum_{n=0}^Na_n\int_0^t(u,e_n)d\mathcal{B}_n(\tau) \right|,\lVert z_k\rVert_{L^2}\right)\geq r\sqrt{\alpha_k t}\right\}\leq \frac{C\alpha_k t}{r^2\alpha_kt}=\frac{C}{r^2}.$$
We need to prove now the following statement 
\begin{lemma}
    We have for any $R>0$ and $r>0$ 
    \begin{align}
\sup_{u_0 \in B_R(L^2)} \mathbb{E}\left( \lVert \phi_N^t P_N u_0 - u_k(t, P_N u_0)\rVert_{L^2}\mathbf{1}_{S_r} \right) \to 0 \text{ as } k \to \infty.
\end{align}
\end{lemma}

\begin{proof}
   Let us recall the equations
  \begin{equation*}
\left\{
\begin{aligned}
   \partial_t u &= i\left[ \Delta u - P_N \left( |u|^{2q} u \right) \right], \\
   du_{\alpha_k} &= i\left[ \Delta u_{\alpha_k} - P_N \left( |u_{\alpha_k}|^{2q} u_{\alpha_k} \right) - \alpha_k ((-\Delta)^{s-1}u_{\alpha_k}+C_{d,s}\lVert u_{\alpha_k} \rVert_{H^{s^-}}^{3\tilde{k}}u_{\alpha_k}) \right] dt + \sqrt{\alpha_k} d\mathcal{W}^N, \\
   u_{\alpha_k} &= v_k + z_{\alpha_k}, \\
   dz_{k} &= (i\Delta z_{\alpha_k}-\alpha_k(-\Delta)^{s-1}z_{\alpha_k})dt + \sqrt{\alpha_k} \sum_{n=1}^N a_n e_n d\mathcal{B}_n(t), \\
   \partial_t v_k &= i \left( \Delta v_k - P_N \left( |v_k + z_{\alpha_k}|^{2q}(v_k + z_{\alpha_k}) \right) \right) - \alpha_k( (-\Delta)^{s-1}v_k+C_{d,s}\lVert v_k + z_{\alpha_k}\rVert_{H^{s^-}}^{3\tilde{k}}(v_k + z_{\alpha_k})).
\end{aligned}
\right.
\end{equation*}
Let us set $w_k=u-v_k=\phi_N^tP_Nu_0-v_k(t,P_Nu_0)$. We have $$\partial_tw_k=i\left[\Delta w_k-P_N( |u|^{2q} u- |v_k + z_{\alpha_k}|^{2q}(v_k + z_{\alpha_k}))  \right]+ \alpha_k( (-\Delta)^{s-1}v_k+C_{d,s}\lVert v_k + z_{\alpha_k}\rVert_{H^{s^-}}^{3\tilde{k}}(v_k + z_{\alpha_k})).$$
Since $$\mathbb{E}\lVert z_{\alpha_k}\rVert_{L^2}\to 0 ~~\text{as}~~ k \to \infty ~~~\forall ~u_0\in B_R(L^2),$$ we just need to prove that $$\sup_{u_0 \in B_R(L^2)} \mathbb{E}\left( \lVert w_k\rVert_{L^2}\mathbf{1}_{S_r} \right) \to 0 \text{ as } k \to \infty.$$\\
Let us define $f_{p-1}$ and $g_{p-1}$ such that $|u|^{2q} u-|v_k + z_{k}|^{2q}(v_k + z_{k})=w_kf_{2q}(u,v_k)-g_{2q}(v_k,z_k)z_k$\\
where $f_{2q}$ and $g_{2q}$ are polynomials of degree $2q$ in the given variables.\\
By taking the inner product with $w_k$, we obtain
\begin{align*}
    \partial_t \lVert w_k\rVert_{L^2}^2&\leq 2\lVert w_k\rVert_{L^2}^2(1+\lambda_N^2+\lVert f_{2q}(u,v_k)\rVert_{L^{\infty}_{t,x}})+2\lVert z_k\rVert_{L^2}^2\lVert g_{2q}(v_k,z_k)\rVert_{L^{\infty}_{t,x}}^2\\
    & +\alpha_kC_0(N)\left(\frac{\lVert v_k\rVert^2_{L^2}}{2}+\frac{\lVert w_k\rVert^2_{L^2}}{2}+\lVert v+z_{k}\rVert_{L^2}^{3\tilde{k}}\left(\frac{\lVert v+z_k\rVert_{L^2}^2}{2}+\frac{\lVert w_k\rVert_{L^2}^2}{2}\right)\right) \\
    &\leq C_1(N)\lVert w_k\rVert_{L^2}^2\left(1+\lambda_N^2+\lVert u\rVert_{L^{\infty}_{t}{L^2_x}}^{2q}+\lVert v_k\rVert_{L^{\infty}_{t}{L^2_x}}^{2q}+\alpha_k+\alpha_k\lVert u_k\rVert_{L^{\infty}_{t}{L^2_x}}^{3\tilde{k}}\right)\\
    &+C_2(N)\lVert z_k\rVert_{L^2}^2\left(\lVert v_k\rVert_{L^{\infty}_{t}{L^2_x}}^{4q}+\lVert z_k\rVert_{L^{\infty}_{t}{L^2_x}}^{4q}+\alpha_k\left(\lVert v_k\rVert_{L^{\infty}_{t}{L^2_x}}^{2}+\lVert u_k\rVert_{L^{\infty}_{t}{L^2_x}}^{3\tilde{k}+2}\right)\right).
\end{align*}
By using the Gronwall's lemma and the fact that $w_k(0)=0$, we have $\mathbb{P}.a.e$
\begin{align}\label{estimate10}
\lVert w_k\rVert^2_{L^2}\leq C_3(N,\alpha_k,u_0,q,,\tilde{k})e^{C_1(N)\int_0^t\left(1+\lambda_N^2+\lVert u\rVert_{L^{\infty}_{t}{L^2_x}}^{2q}+\lVert v_k\rVert_{L^{\infty}_{t}{L^2_x}}^{2q}+\alpha_k+\alpha_k\lVert u\rVert_{L^{\infty}_{t}{L^2_x}}^{3\tilde{k}}\right)d\tau}\left(\int_0^t\lVert z_k\rVert^2_{L^2}d\tau \right).
\end{align}
We know that $$\lim_{k\to 0}\sup_{t\in [0,T]}\lVert z_k\rVert_{L^2}=0\quad \mathbb{P}.a.e.$$ Now, by using the Ito formula on the stochastic solution $u_k$, and using the fact $\alpha_k\leq 1$ and we are on the $S_r$, we arrive at $$\lVert u_k\rVert_{L^2}^2\leq \lVert P_Nu_0\rVert_{L^2}^2+C(r,N)t.$$\\
After that, it is easy to see that on $S_r$, $$\lVert w_k\rVert_{L^2}\leq \lVert v_k\rVert_{L^2}+\lVert z_k\rVert_{L^2} \leq \lVert u_k\rVert_{L^2} +2\lVert z_k\rVert_{L^2}\leq \lVert u_0\rVert_{L^2}+3C(r,N)T.$$
\begin{equation*}
\begin{cases}
    \sup_{u_0 \in B_R} \lVert w_k\rVert_{L^{\infty}_t L^2_x} \mathbf{1}_{S_r} \leq R + 3C(R,N)T, \\
    \sup_{k \geq 1} \sup_{u_0 \in B_R} \lVert w_k\rVert_{L^{\infty}_t L^2_x} \mathbf{1}_{S_r} \leq R + 3C(R,N)T.
\end{cases}
\end{equation*}
Thus, using the estimate (\ref{estimate10}) and the  $L^2$ conservation of the deterministic solution\\ $\lVert u(t)\rVert_{L^2}=\lVert P_Nu_0\rVert_{L^2}$, we have$$\sup_{u_0 \in B_R} \lVert w_k\rVert_{L^{\infty}_t L^2_x}^2 \mathbf{1}_{S_r} \leq A(R,N,r,T)\|z_k\|_{L^1_tL^2_x}.$$\\
We obtain that $$\lim_{k\to 0}\sup_{u_0\in B_R}\lVert w_k\rVert_{L^{\infty}_t L^2_x}^2\mathbf{1}_{S_r}=0$$\\
So $$\mathbb{E}\sup_{u_0\in B_R}\lVert w_k\rVert_{L^{\infty}_t L^2_x}^2\mathbf{1}_{S_r}\to 0\quad \text{as} \quad k\to \infty$$
Now using the fact that $\forall u_0\in B_R$, we have $\lVert w_k(t,P_Nu_0)\rVert_{L^2}\mathbf{1}_{S_r} \leq \sup_{u_0\in B_R}\lVert w_k(t,P_Nu_0)\rVert_{L^{\infty}_t L^2_x}\mathbf{1}_{S_r}$\\
We will have therefore $$\sup_{u_0 \in B_R(L^2)} \mathbb{E}\left( \lVert w_k\rVert_{L^2}\mathbf{1}_{S_r} \right) \to 0 \text{ as } k \to \infty.$$
Which is the claim.
 
\end{proof}
By letting $R,r,k \to \infty$ and using the lemma, we have $A_3, A_4 \to 0$. This completes the proof of invariance.
\end{proof}

\leavevmode
\subsection{Uniform control on the finite-dimensional dynamics}
\leavevmode

Let $B_a=\{u\in L^2| ~~\lVert u\rVert_{L^2}\leq a\}$
where the number $a > 0$  is arbitrary small.\\
Let us set  $L_{a}^2 = L^2 \setminus B_a, ${ and } $E_{N}^{a} = \{ u \in E_{N} \mid \lVert u\rVert_{L^2} > a \}.$
We have 
\begin{align}
   \int_{E_N^{a}} \lVert u\rVert_{H^{s^-}}^{3\tilde{k}}d\mu_N\leq C. 
\end{align}
Indeed, \begin{align*}
    \int_{E_N^{a}}\lVert u\rVert_{H^{s^-}}^{3\tilde{k}}\mu_N(du)=\int_{L^2_a}
\lVert u\rVert_{H^{s^-}}^{3\tilde{k}}\mu_N(du)\leq \int_{L^2_a}\frac{\lVert u\rVert_{L^2}^2}{a^2}\lVert u\rVert_{H^{s^-}}^{3\tilde{k}}\mu_N(du)\leq C:=C(a).
\end{align*}
\begin{proposition}\label{globalization1}
Let $s_{M^d}< s' \leq s^-$, let $\epsilon>0$, let $a>0$ and let $N \geq 0$. There exists a constant $C=C(a)  > 0$ and $\tilde{k}:=\tilde{k}(\epsilon)$ such that:
for any $i \in \mathbb{N}^*$, there is a set $\Sigma^{i}_{N,s'}$ satisfying 
\begin{align}\label{Bound_level-ens}
 \mu_N\left(E_N^{a} \setminus \Sigma^{i}_{N,s'}\right) \leq C i^{-2\tilde{k}},  
\end{align}
and having the property that:
$\forall\, u_{0,N} \in \Sigma^{i}_{N,s'}:$
\begin{align}\label{Est_individ}
    \forall t \in \mathbb{R}, \quad \|\phi_N^t u_{0,N}\|_{H^{s'}} \leq C(\|u_0\|_{H^{s}})((1+|t|))^{\epsilon}.
\end{align} 
\end{proposition}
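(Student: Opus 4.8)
The plan is to build the ensemble $\Sigma^i_{N,s'}$ by controlling the $H^{s'}$-norm of the Galerkin flow along a carefully chosen sequence of times, using the moment bound $\int_{E_N^{a}}\|u\|_{H^{s^-}}^{3\tilde{k}}\,d\mu_N\le C(a)$ established just above, together with the invariance of $\mu_N$ under $\phi_N^t$ and the conservation of the $L^2$-norm (which makes $E_N^{a}$ flow-invariant). The starting point is a tail bound at a fixed time: since $s'\le s^-$ gives $H^{s^-}\hookrightarrow H^{s'}$, for any $t\in\mathbb{R}$ and any level $\Lambda>0$, invariance and Markov's inequality yield
\[
\mu_N\big(E_N^{a}\cap\{u_0:\ \|\phi_N^t u_0\|_{H^{s'}}>\Lambda\}\big)=\mu_N\big(E_N^{a}\cap\{u:\ \|u\|_{H^{s'}}>\Lambda\}\big)\le \frac{C(a)}{\Lambda^{3\tilde{k}}},
\]
the first equality using that $\phi_N^t$ preserves both $\mu_N$ and $E_N^{a}$.

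Next I discretize time with a non-uniform mesh dictated by the local theory. Reading off Proposition \ref{local well posedness} (the increment property) a local existence exponent $\delta=\delta(q,r)>0$ so that data of $H^{s'}$-norm $\le\Lambda$ generate a solution that stays $\le 2C\Lambda$ over a window of length $\sim\Lambda^{-\delta}$ (the local theory applies at regularity $s'$ since $s'>s_{M^d}$), I fix a base scale $\Lambda_0$, set $\Lambda_{i,n}=\Lambda_0\,i^{2/3}(1+|n|)^{b}$ for $n\in\mathbb{Z}$ with $b=\epsilon/(1+\epsilon\delta)$, and define nodes by $T_0=0$ and, for $n\ge 0$, $T_{n+1}-T_n$ equal to the local existence time at level $\Lambda_{i,n}$ (symmetrically for $n<0$), so that $|T_n|\sim(\Lambda_0 i^{2/3})^{-\delta}(1+|n|)^{1-b\delta}$. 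Then I put
\[
\Sigma^i_{N,s'}=\bigcap_{n\in\mathbb{Z}}\big\{u_0\in E_N^{a}:\ \|\phi_N^{T_n}u_0\|_{H^{s'}}\le \Lambda_{i,n}\big\}.
\]
Summing the tail bound over $n$ gives $\mu_N(E_N^{a}\setminus\Sigma^i_{N,s'})\le C(a)\Lambda_0^{-3\tilde{k}}i^{-2\tilde{k}}\sum_n(1+|n|)^{-3\tilde{k}b}$, and since $3\tilde{k}b=3/(1+\epsilon\delta)>1$ for $\epsilon$ small, the $n$-sum converges and yields \eqref{Bound_level-ens} (the exponent $2/3$ is chosen precisely so that $3\tilde{k}\cdot\tfrac23=2\tilde{k}$).

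For the pointwise-in-time bound I interpolate between nodes. For $t\in[T_n,T_{n+1}]$, membership in $\Sigma^i_{N,s'}$ gives $\|\phi_N^{T_n}u_0\|_{H^{s'}}\le\Lambda_{i,n}$, and since the gap equals the local existence time at that level, the increment property bounds $\sup_{[T_n,T_{n+1}]}\|\phi_N^{\tau}u_0\|_{H^{s'}}\le 2C\Lambda_{i,n}$. Inverting $|T_n|\sim(\Lambda_0 i^{2/3})^{-\delta}(1+|n|)^{1-b\delta}$ and using $b/(1-b\delta)=\epsilon$ rewrites $\Lambda_{i,n}$ as $C(\Lambda_0,i)(1+|T_n|)^{\epsilon}\sim C(\Lambda_0,i)(1+|t|)^{\epsilon}$ (the ratio $\Lambda_{i,n+1}/\Lambda_{i,n}\to1$ renders the interpolation harmless), which is \eqref{Est_individ}; the constant depends on $i$, which is what is meant by $C(\|u_0\|_{H^s})$, since the $n=0$ constraint forces $\|u_{0,N}\|_{H^{s'}}\lesssim\Lambda_0 i^{2/3}$ and $i$ will be tied to the data norm when $\Sigma$ is assembled in Theorem \ref{recap}. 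All constants are independent of $N$, inherited from the $N$-uniform estimates \eqref{estimate7}--\eqref{estimate8}.

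The main obstacle is the tension between three requirements that a single choice of $(\Lambda_{i,n},T_n)$ must meet: the mesh must shrink fast enough that each gap stays within the decreasing local existence time at the current level; the levels must grow slowly enough that $|T_n|$ diverges and the bound reads as $(1+|t|)^{\epsilon}$; yet fast enough that $\sum_n\Lambda_{i,n}^{-3\tilde{k}}$ converges and carries the factor $i^{-2\tilde{k}}$. The exponent identity $b=\epsilon/(1+\epsilon\delta)$ reconciles the first two, while the largeness of $\tilde{k}=\epsilon^{-1}$ built into the dissipation model makes the polynomial moment strong enough to close the third; this is the \emph{power nature of the large deviation estimate} alluded to in the introduction, and it replaces the exponential/logarithmic balance of the Gaussian (Gibbs) case.
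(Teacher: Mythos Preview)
Your argument is correct and reaches the same conclusion, but it is organized differently from the paper's proof. The paper follows the classical Bourgain layout with a \emph{double} index: for each pair $(i,j)$ it lays a \emph{uniform} mesh of step $T_0\sim(ij)^{-2q/\gamma}$ on the interval $[0,j^{\tilde{k}}]$, sets the single level $ij$, defines $\Sigma^{i,j}_{N,s'}=\bigcap_{l=0}^{\lfloor j^{\tilde{k}}/T_0\rfloor}\phi_N^{-lT_0}(B_{ij})$, and obtains $\mu_N(E_N^{a}\setminus\Sigma^{i,j}_{N,s'})\lesssim j^{\tilde{k}}(ij)^{2q/\gamma}(ij)^{-3\tilde{k}}\le C i^{-2\tilde{k}}j^{-\tilde{k}}$ by invariance and Chebyshev (using only that $\tilde{k}\ge 2q/\gamma$). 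Intersecting over $j$ gives $\Sigma^i_{N,s'}$ and the bound \eqref{Bound_level-ens}; the growth estimate then comes from picking, for a given $t$, the $j$ with $j^{\tilde{k}}-1\le|t|\le j^{\tilde{k}}$, so that $j\le(1+|t|)^{1/\tilde{k}}=(1+|t|)^{\epsilon}$. By contrast, you build $\Sigma^i_{N,s'}$ in one shot from a \emph{non-uniform} mesh $(T_n)$ with levels $\Lambda_{i,n}=\Lambda_0 i^{2/3}(1+|n|)^{b}$, $b=\epsilon/(1+\epsilon\delta)$, choosing each gap equal to the local existence time at the current level. The trade-off: the paper's uniform-mesh/double-index scheme keeps the arithmetic minimal (no exponent to solve for; one only needs $\tilde{k}$ large relative to $2q/\gamma$), while your construction is more streamlined (no intersection over $j$) and makes the mechanism that converts the $3\tilde{k}$-moment into a $(1+|t|)^{\epsilon}$ bound completely explicit through the identity $b/(1-b\delta)=\epsilon$. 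Both rely on exactly the same ingredients: invariance of $\mu_N$, $L^2$-conservation (hence flow-invariance of $E_N^{a}$), the moment $\int_{E_N^{a}}\|u\|_{H^{s^-}}^{3\tilde{k}}d\mu_N\le C(a)$, and the $N$-uniform increment property of Proposition~\ref{local well posedness}.
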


\begin{proof}
Without loss of generality, we will consider $t\geq0$
Let us define for $ j\geq 1$, the set $$B^{i,j}_{N,s'}=\left\{u_{0,N}\in E_N|~ \|u_{0,N}\|_{H^{s'}}\leq ij\right\}.$$ and let $T_0\sim(ij)^{\frac{-2q}{\gamma}}$ (with $\gamma <1$) be the local time existence:
\begin{align}\label{Evol_ball}
    \forall t\in [0,T_0],~ \phi^t_N B^{i,j}_{N,s'}\subset \left\{u\in E_N\ | ~\|u\|_{H^{s'}}\leq 2C~ij\right\}.
\end{align}
Let us set $$\Sigma^{i,j}_{N,s'} = \bigcap_{l=0}^{\left\lfloor \frac{j^{\tilde{k}}}{T_0} \right\rfloor} \phi^{-lT_0}_N\left(B^{i,j}_{N,s'}\right).$$
 By using the invariance of $\mu_N$ under $\phi^N_t$, we see that
\begin{align}
\mu_N(E_N^{a}\backslash\Sigma_{N,s'}^{i,j}) \leq \sum_{l=0}^{[\frac{j^{\tilde{k}}}{T_0}]} \mu(E^a_N\backslash\Sigma^{i,j}_{N,s'}).
\end{align}
Since $s'\leq s^-$, then $\mathbb{E}_{\mu_N}\|u\|^{3\tilde{k}}_{H^{s'}}\leq C:=C(a)$ and $T_0\sim(ij)^{\frac{-2q}{\gamma}}$, therefore by using Chebyshev's inequality, we obtain 
\[
\mu_N(E_N^{a} \backslash \Sigma_{N,s'}^{i,j}) 
\leq C_1 \left(\left\lfloor \frac{j^{\tilde{k}}}{T_0} \right\rfloor + 1\right)(i \cdot j)^{-3\tilde{k}} \leq C j^{\tilde{k}} (i \cdot j)^{\frac{2q}{\gamma}} (i \cdot j)^{-3\tilde{k}} 
\leq C j^{\tilde{k}} (i \cdot j)^{\tilde{k}} (i \cdot j)^{-3\tilde{k}} 
\leq C i^{-2\tilde{k}} j^{-\tilde{k}}.
\]
Notice that the series $\sum_{j\geq 1}j^{-\tilde{k}}$ converges and let us set $$\Sigma^{i}_{N,s'}=\bigcap_{j\geq 1}\Sigma^{i,j}_{N,s'}.$$ 
Then we obtain $$\mu_N\left(E_N^{a} \setminus \Sigma^{i}_{N,s'}\right)\leq Ci^{-2\tilde{k}},$$
where $C$ does not depend on $N$.\\
We see also that:
$$\forall u_{0,N} \in \Sigma^{i,j}_{N,s'},~ \forall t\leq j^{\tilde{k}},~ \|\phi^t_N u_{0,N}\|_{H^{s'}}\leq 2C~ij$$ In fact, we have $\forall t\leq j^{\tilde{k}}, t=lT_0+\tau$ where $l\in[0,\lfloor\frac{j^{\tilde{k}}}{T_0}\rfloor]$ and $\tau \in [0,T_0[$. By definition of $\Sigma^{i,j}_{N,s'}$, $u_{0,N} $ can be written as $\phi_N^{-l_1T_0}w$ with a fixed integer $l_1\in[0,[\frac{j^{\tilde{k}}}{T_0}]]$ and $w \in B^{i,j}_{N,s'} $.\\ 
Then $\phi_N^tu_{0,N}=\phi_N^{\tau}\phi_N^{lT_0}u_{0,N}=\phi_N^{\tau}w$.

We then obtain, with the use of \eqref{Evol_ball}, $$\|\phi^t_N u_{0,N}\|_{H^{s'}}\leq 2C ij \quad \forall t\leq j^{\tilde{k}}.$$ \\
 Let $t>0$, there is $j\geq 1$ such that $j^{\tilde{k}}-1\leq |t|\leq j^{\tilde{k}}$. Then $$j\leq (1+\abs{t})^{\frac{1}{\tilde{k}}}.$$
We arrive at $$ \forall u_0 \in \Sigma^{i}_{N,s'}, \forall t \in \mathbb{R} \quad \|\phi^t_N u_0\|_{H^{s'}}\leq 2Ci(1+|t|)^{\frac{1}{\tilde{k}}}\leq 2C i(1+|t|)^\epsilon. $$
This completes the proof.
\end{proof}
\leavevmode
\subsection{Construction of the statistical ensemble}
The estimates (\ref{estimate7},~\ref{estimate7'}) combined with the Prokhorov theorem establish the existence of a measure $\mu$ as a weak limit of a subsequence $\mu^{N_k}\subset (\mu^N)$, and a simple argument show that $\mu(H^s)=1$.\\
We employ the Skorokhod representation theorem to obtain a probability space still denoted $(\Omega,\mathcal{F},\mathbb{P})$ on which are defined the random variables $u_{N_k}$ and $u$ satisfying the following.
\begin{itemize}
    \item $u$ is distributed by $\mu$ and $\forall N_k, u_{N_k}$ is distributed by $\mu_{N_k}$;
    \item $u_{N_k}$ converges to $u_{}$ almost surely in $H^{s'}$.
\end{itemize}

Let us introduce the set\\
\begin{align}
\Sigma^{i}_{s'} = \left\{ u \in H^{s'} \mid  \exists (u_{N_k}), \ u_{N_k} \to u \ \text{as} \ k \to \infty, \ \text{and} \ u_{N_k} \in \Sigma^{i}_{N_k,s'} \right\}.
\end{align}
 \\ Let us introduce the statistical ensemble
\begin{align}
\Sigma_{s'} = \bigcup_{i \geq 1} \Sigma^{i}_{s'}.
\end{align}
\begin{lemma}\label{Lemma_full_mu}
    We have that
    \begin{align}
        \mu(\Sigma_{s'})=1.
    \end{align}
\end{lemma}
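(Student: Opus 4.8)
The plan is to bound $\mu(\Sigma^i_{s'})$ from below for each fixed $i$ and then pass to the limit $i\to\infty$, using only the almost sure convergence supplied by the Skorokhod coupling and the weak convergence $\mu_{N_k}\to\mu$, rather than any restriction measure. On the probability space constructed above we have $u_{N_k}\to u$ almost surely in $H^{s'}$, with $u$ distributed by $\mu$ and $u_{N_k}$ by $\mu_{N_k}$. Fix $a>0$ and $i\geq1$, set $E_k:=\{\,u_{N_k}\in\Sigma^i_{N_k,s'}\,\}$, and consider $A_i:=\limsup_k E_k$. The key observation is the inclusion $A_i\subset\{\,u\in\Sigma^i_{s'}\,\}$ (up to a null set): on $A_i$ there is a subsequence along which $u_{N_k}\in\Sigma^i_{N_k,s'}$, and since that subsequence still converges to $u$ in $H^{s'}$, the limit $u$ is a subsequential $H^{s'}$-limit of points of the $\Sigma^i_{N_k,s'}$, i.e.\ $u\in\Sigma^i_{s'}$ by definition. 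Hence $\mu(\Sigma^i_{s'})\geq\mathbb{P}(A_i)$.

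I would then quantify $\mathbb{P}(A_i)$ with the reverse Fatou inequality for events, $\mathbb{P}(\limsup_k E_k)\geq\limsup_k\mathbb{P}(E_k)$, combined with the size estimate of Proposition \ref{globalization1}. Writing $E_{N_k}\setminus\Sigma^i_{N_k,s'}\subset(E_{N_k}^{a}\setminus\Sigma^i_{N_k,s'})\cup B_a$ and invoking \eqref{Bound_level-ens}, one gets
\[
\mathbb{P}(E_k)=\mu_{N_k}(\Sigma^i_{N_k,s'})\geq 1-C(a)\,i^{-2\tilde{k}}-\mu_{N_k}(B_a).
\]
Since $B_a$ is closed in $L^2$ and $\mu_{N_k}\to\mu$ weakly, the portmanteau theorem yields $\limsup_k\mu_{N_k}(B_a)\leq\mu(B_a)$, so that
\[
\mu(\Sigma^i_{s'})\geq\mathbb{P}(A_i)\geq\limsup_k\mathbb{P}(E_k)\geq 1-C(a)\,i^{-2\tilde{k}}-\mu(B_a).
\]

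Finally, since $\Sigma_{s'}\supset\Sigma^i_{s'}$ for every $i$, letting $i\to\infty$ with $a$ fixed (so that $C(a)\,i^{-2\tilde{k}}\to0$) gives $\mu(\Sigma_{s'})\geq 1-\mu(B_a)$. Letting then $a\to0$ and using $B_a\downarrow\{0\}$ together with the absence of an atom of $\mu$ at the origin — which follows from the absolute continuity of the law of $u\mapsto\|u\|_{L^2}$ under $\mu$ (item~(5) of Theorem \ref{recap}) — we obtain $\mu(B_a)\downarrow\mu(\{0\})=0$, and therefore $\mu(\Sigma_{s'})=1$, which is the claim of Lemma \ref{Lemma_full_mu}.

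I expect the main obstacle to be the step identifying the event $A_i$ with membership of the limit in $\Sigma^i_{s'}$: this requires reading the definition of $\Sigma^i_{s'}$ as the set of subsequential $H^{s'}$-limits of points in the $\Sigma^i_{N_k,s'}$ (so that the $\limsup$/infinitely-often event, rather than an eventually-in event, is the correct one to use with reverse Fatou), and checking the measurability needed to make $\mu(\Sigma^i_{s'})\geq\mathbb{P}(A_i)$ legitimate. A secondary point is justifying $\mu(\{0\})=0$ so that the small-ball remainder $\mu(B_a)$ can be sent to zero; this is exactly the place where the nondegeneracy of the limiting measure is used, and it is precisely this probabilistic packaging that replaces the restriction-measure argument.
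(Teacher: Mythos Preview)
Your proof follows essentially the same route as the paper: define the events $E_k=\{u_{N_k}\in\Sigma^i_{N_k,s'}\}$ on the Skorokhod space, observe the inclusion $\limsup_k E_k\subset\{u\in\Sigma^i_{s'}\}$ from the very definition of $\Sigma^i_{s'}$, and apply the reverse Fatou inequality for events to bound $\mu(\Sigma^i_{s'})$ from below by $\limsup_k\mu_{N_k}(\Sigma^i_{N_k,s'})$, then send $i\to\infty$. Your handling of the small-ball remainder $\mu_{N_k}(B_a)$ via portmanteau and the no-atom property of $\mu$ at the origin is in fact more explicit than the paper's proof, which passes directly from \eqref{Bound_level-ens} to $\limsup_N\mu_N(\Sigma^i_{N,s'})\geq 1-Ci^{-2\tilde k}$ without isolating that term.
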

\begin{proof}
In the line of Skorokhod sequences above, we can find a full probability set $\tilde{\Omega}$ such that the convergence holds for all $\omega\in\tilde{\Omega}.$ Define the sets
\begin{align}
    \Bbb A_{N,{s'}} &=\{\omega\in\tilde{\Omega}\ |\ u_N^\omega\in \Sigma^i_{N,{s'}}\}\\
    \Bbb A_{s'} &=\{\omega\in\tilde{\Omega}\ |\ u^\omega\in \Sigma^i_{s'}\}.
\end{align}
We remark that, by the construction of $\tilde{\Omega}$, for all $\omega\in\tilde{\Omega}$, the sequence $\{u_{N_k}^\omega\}$ converges.
Let us prove that
\begin{align}\label{Eq_limsup}
        \limsup_N\Bbb A_{N,{s'}}\subset \Bbb A_{s'}.
    \end{align}
If $\omega$ belongs to an infinite number of $\Bbb A_{N_k,{s'}}$, the corresponding $L^2$ elements form a sequence $\{u_{N_k}^\omega\}$ whose $kth$ element, for any $k$, belong to $\Sigma^i_{N_k,{s'}}$. Recalling the convergence established above, we obtain the existence of an element $u^\omega$ as its limit. Therefore $\omega$ belongs to $\Bbb A_{s'}$. We arrive at \eqref{Eq_limsup}.
\\Next, with the use of the inclusion \eqref{Eq_limsup}, the inequality \eqref{Limsup_nu} below and the bound \eqref{Bound_level-ens}, let us write 
\begin{align}
    \mu(\Sigma^i_{s'})=\Bbb P(u^{-1}(\Sigma^i_{s'}))=\Bbb P(\Bbb A_{s'})\geq \Bbb P(\limsup_N\Bbb A_{N,{s'}}) &\geq \Bbb \limsup_{N\to\infty}\Bbb P(\Bbb A_{N,{s'}})\\
    &= \limsup_{N\to\infty}\Bbb P (u_N^{-1}(\Sigma^i_{N,{s'}}))\\
    &=\limsup_{N\to\infty}\mu_N(\Sigma^i_{N,{s'}})\geq 1-i^{-2\tilde{k}}.
\end{align}
Now, since $\mu$ is finite and the sequence $(\Sigma^i_{s'})_i$ is non decreasing, we have
\begin{align}
   1\geq \mu(\Sigma_{s'})=\lim_{i\to\infty}\mu(\Sigma^i_{s'})\geq \lim_{i\to\infty}(1-Ci^{-2\tilde{k}})=1.
\end{align}
This finishes the proof.
\end{proof}
\begin{lemma}
    Let $(E,\mathcal{F},\nu)$ be a measure space, with $\nu(E)<\infty$. Then, for all sequences of measurable sets $(E_k)$, we have
    \begin{align}\label{Limsup_nu}
        \nu(\limsup_{k\to\infty} E_k)\geq \limsup_{k\to\infty}\nu(E_k).
    \end{align}
\end{lemma}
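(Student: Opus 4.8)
The plan is to recognize \eqref{Limsup_nu} as the reverse Fatou lemma for sets and to reduce it to continuity from above of the finite measure $\nu$. First I would recall the definition $\limsup_{k\to\infty}E_k=\bigcap_{n\geq 1}\bigcup_{k\geq n}E_k$ and introduce the tail unions $F_n:=\bigcup_{k\geq n}E_k$. These are measurable and form a non-increasing sequence, $F_{n+1}\subset F_n$, with $\bigcap_{n\geq 1}F_n=\limsup_{k}E_k$.

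Next, since $\nu(E)<\infty$ we have $\nu(F_1)\leq\nu(E)<\infty$, so continuity from above applies to the decreasing sequence $(F_n)_{n\geq 1}$ and yields
\[
\nu\Big(\limsup_{k\to\infty}E_k\Big)=\nu\Big(\bigcap_{n\geq 1}F_n\Big)=\lim_{n\to\infty}\nu(F_n).
\]
On the other hand, for every fixed $n$ and every $k\geq n$ we have $E_k\subset F_n$, so by monotonicity of $\nu$ we get $\nu(F_n)\geq\nu(E_k)$ for all such $k$, and therefore $\nu(F_n)\geq\sup_{k\geq n}\nu(E_k)$.

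Combining these two facts and letting $n\to\infty$, I arrive at
\[
\nu\Big(\limsup_{k\to\infty}E_k\Big)=\lim_{n\to\infty}\nu(F_n)\geq\lim_{n\to\infty}\sup_{k\geq n}\nu(E_k)=\limsup_{k\to\infty}\nu(E_k),
\]
which is the claimed inequality.

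The only delicate step, and the reason the hypothesis $\nu(E)<\infty$ is imposed, is the appeal to continuity from above: this property genuinely requires some $F_n$ to have finite measure. Without it the statement is false, as the standard counterexample on $\R$ with Lebesgue measure and $E_k=[k,+\infty)$ shows, where $\limsup_k E_k=\emptyset$ has measure $0$ while $\limsup_k\nu(E_k)=+\infty$. Everything else in the argument is routine monotonicity, so I expect no further obstacle.
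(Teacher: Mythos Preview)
Your proof is correct and follows essentially the same route as the paper: introduce the decreasing tail unions $F_n=\bigcup_{k\geq n}E_k$, use continuity from above (equivalently, $\nu(\cap_n F_n)=\inf_n\nu(F_n)$ since $\nu$ is finite), and combine with the trivial bound $\nu(F_n)\geq\sup_{k\geq n}\nu(E_k)$. The only difference is that you spell out the role of the finiteness hypothesis and include a counterexample, which the paper omits.
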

\begin{proof}
We have that the sequence $F_k:=\cup_{j\geq k}E_j$ is non increasing in $k$. Then
    \begin{enumerate}
        \item $\nu(F_k)\geq \sup_{j\geq k}\nu(E_j)$,
        \item $\nu(\cap_{k\geq 0}F_k)=\inf_{k\geq 0}\nu(F_k)$.
    \end{enumerate}    
    Putting these together, we obtain
    \begin{align}
         \nu(\limsup_{k\to\infty} E_k)\geq\inf_{k\geq 0}\sup_{j\geq k}\nu(E_j)=\limsup_{k\to\infty}\nu(E_k),
    \end{align}
    which was the claim.
\end{proof}

\begin{proposition}
  Let $M^d $ be a Riemannian compact manifold of dimension $d$.
For any $s_{M^d}< s$, for every $u_0 \in \Sigma_{s'}$, the solution $\phi^tu_0$ of (\ref{Intro_NLS}) given by Theorem \ref{local well posedness} is global in $H^s$. And 
\begin{equation}
\|\phi^t u_0\|_{H^{s'}}\leq C(\lVert u_0\rVert_{H^s})(1+|t|)^\epsilon \quad \forall t\in \mathbb{R}.  
\end{equation}
\end{proposition}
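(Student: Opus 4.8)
The plan is to derive both conclusions as an immediate application of the Globalization lemma (Lemma~\ref{globalization lemma}), feeding it the uniform finite-dimensional bounds already packaged in Proposition~\ref{globalization1}. The only work is to extract, for a given $u_0\in\Sigma_{s'}$, a suitable approximating Galerkin sequence together with an $N$-independent nondecreasing majorant of its orbits.

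First I would fix $u_0\in\Sigma_{s'}$ and choose $i\geq 1$ with $u_0\in\Sigma^i_{s'}$. By the definition of $\Sigma^i_{s'}$ together with the Skorokhod construction carried out before Lemma~\ref{Lemma_full_mu}, there is a sequence $u_{N_k}\in\Sigma^i_{N_k,s'}$ with $u_{N_k}\to u_0$ in $H^{s'}$. Since $\Sigma^i_{N_k,s'}\subset B^{i,1}_{N_k,s'}=\{\|u\|_{H^{s'}}\leq i\}$, letting $k\to\infty$ gives $\|u_0\|_{H^{s'}}\leq i$, so the level $i$ may be taken comparable to $\|u_0\|_{H^{s'}}\leq\|u_0\|_{H^s}$. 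Proposition~\ref{globalization1}, via the individual bound~\eqref{Est_individ}, then supplies, uniformly in $k$, the control $\|\phi^t_{N_k}u_{N_k}\|_{H^{s'}}\leq 2Ci\,(1+|t|)^{\epsilon}$ for all $t\in\mathbb{R}$. Hence $f(r):=2Ci\,(1+r)^{\epsilon}=C(\|u_0\|_{H^s})(1+r)^{\epsilon}$ is a nondecreasing function, independent of $k$, dominating every approximate orbit in $H^{s'}$.

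Since $u_0\in H^s$ (which holds $\mu$-almost everywhere because $\mu(H^s)=1$, so it is harmless to work on $\Sigma_{s'}\cap H^s$), the pair $(u_{N_k},f)$ satisfies the hypotheses of Lemma~\ref{globalization lemma}, whose three conclusions give at once that $\phi^tu_0$ is global in $H^{s'}$, that $\|\phi^tu_0\|_{H^{s'}}\leq f(|t|)=C(\|u_0\|_{H^s})(1+|t|)^{\epsilon}$, and that $\phi^tu_0$ is global in $H^s$; the last two are precisely the assertions of the proposition. The one delicate point, which I expect to be the main obstacle, is that $\Sigma^i_{s'}$ only provides the convergence $u_{N_k}\to u_0$ in $H^{s'}$, whereas Lemma~\ref{globalization lemma} is phrased with convergence in $H^s$. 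I would dispose of this by observing that the proof of that lemma uses the stronger convergence solely through the quantity $\|u_0-u_{N_k}\|_{H^{s'}}$ and the vanishing prefactor $(1+\lambda_{N_k})^{\frac{s'-s}{2}}\to 0$, the latter multiplying the local $H^s$-norm of $\phi^\tau u_0$, which is finite on the short existence interval exactly because $u_0\in H^s$ (Proposition~\ref{local well posedness}). Thus $H^{s'}$-convergence together with $u_0\in H^s$ is all the argument consumes, the conclusions of Lemma~\ref{globalization lemma} remain valid, and the proof is complete.
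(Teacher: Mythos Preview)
Your proof is correct and follows essentially the same approach as the paper: extract an approximating sequence from the definition of $\Sigma^i_{s'}$, invoke the uniform bound from Proposition~\ref{globalization1}, and apply the Globalization lemma. You are in fact more careful than the paper on one point: you notice that $\Sigma^i_{s'}$ only supplies $H^{s'}$-convergence of the approximants while Lemma~\ref{globalization lemma} is stated with $H^s$-convergence, and you correctly observe that the proof of that lemma consumes only $\|u_0-u_{0,N}\|_{H^{s'}}$ together with $u_0\in H^s$, so no harm is done; the paper's proof simply invokes the lemma without commenting on this.
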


\begin{proof}
    We just need to prove the globalization lemma assumptions.\\
    Let $ u_0 \in \Sigma_{s'}$,
    then $u_0 \in \Sigma^{i}_{s'}$ for some $i$. By definition of $\Sigma^{i}_{s'},  \exists (u_{0,N_k}), \ u_{0,N_k} \to u_0 \ \text{as} \ k \to \infty, \ \text{and} \ u_{0,N_k} \in \Sigma^{i}_{N_k,s'}. $\\
    Hence, we have
    \begin{enumerate}
        \item the existence of the sequence that is the first condition of the (globalization) lemma \ref{globalization lemma}; 
        \item and using  Proposition (\ref{globalization1}), we have that (since $ u_{0,N_k} \in \Sigma^{i}_{N,s'}$)  $$ 
\|\phi^t_{N_k} u_0\|_{H^{s'}}\leq C(\lVert u_0\rVert_{H^s})(1+|t|)^\epsilon \quad \forall t\in \mathbb{R}.  
$$
    \end{enumerate}
Hence we can invoke the globalization lemma \ref{globalization lemma} to obtain the claim.
\end{proof}
Now let us consider an increasing sequence \(  (s'_n)_{n \in \mathbb{N}} \) such that $ s_{M^d}<s'_n $ and \( \lim_{n \to \infty} s'_n = s^-\) and let us set our statistical ensemble: $$\Sigma=\Sigma_{s,q,\epsilon}= \bigcap_{n\in \mathbb{N}}\Sigma_{s'_n}.$$
Since each of the sets $\Sigma_{s_n'}$ is of full measure, we have that
\begin{align}
    \mu(\Sigma)=1.
\end{align}
\begin{remark}
  We can summarize all the analysis that we have been done above in the following: For any $u_0\in\Sigma$, the associated solution $\phi^tu_0$ provided by Proposition \ref{local well posedness} is global in time, and\\ $
            \|\phi^tu_0\|_{H^{s^-}}\leq C(\|u_0\|_{H^{s}})(1+|t|)^{\epsilon}\quad \forall t\in \mathbb{R}.$
   \end{remark}
  \leavevmode
\subsection{Further properties of $\Sigma$ and $\mu$}
\begin{proposition}
    The following holds:
\begin{enumerate}
 \item The distribution of functional $u\mapsto\|u\|_{L^2}$ under $\mu$ is absolutely continuous w.r.t. the Lebesgue measure on $\R$. In particular $\mu$ doesn't have an atom.
\item For any $s_{M^d} < s$, the measure \( \mu \) is invariant under \( \phi^t \).\\
\item For any $ s_{M^d} < s' \leq s^-$, any $s_{M^d}< s_1 < s' $, for every \( t \in \mathbb{R} \), there is \( i_1 \in \mathbb{N}^* \) such that for any \( i \in \mathbb{N}^* \), if \( u_{0,N} \in \Sigma^i_{N,s'} \), then we have \( \phi_N^t u_{0,N} \in \Sigma^{ii_1}_{N,s_1} \).\\
\item  The flow \( \phi^t \) satisfies
\( \phi^t \Sigma = \Sigma \), for any \( t \in \mathbb{R} \).
\end{enumerate}
\end{proposition}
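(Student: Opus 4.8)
The plan is to prove the four items separately, leaning on the apparatus already in place: the convergence Lemma~\ref{local uniform}, the growth bound \eqref{Est_individ} of Proposition~\ref{globalization1}, the globalization Lemma~\ref{globalization lemma}, and the invariance-transfer scheme used earlier for $\mu_N$. Items (2)--(4) are essentially a transfer of established uniform bounds through the limits $\alpha\to0$, $N\to\infty$; item (1) is the one requiring a genuinely new input, since $\phi^t$ \emph{conserves} $\|u\|_{L^2}$ and hence cannot itself regularize the law of $u\mapsto\|u\|_{L^2}$.

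For item (1) I would go back to the stochastic approximations and test the stationarity of $\mu_N^\alpha$ against $G(u)=f(\|u\|_{L^2}^2)$ with $f\in C_c^2(\R)$. By It\^o's formula, the Hamiltonian part of the drift drops out because $(u,i(\Delta u-P_N(|u|^{2q}u)))=0$, the remaining drift contributes $-2\alpha f'(\|u\|_{L^2}^2)\mathcal{M}(u)$ through the dissipation $\mathcal{L}_s$, and the quadratic It\^o term gives $\tfrac{\alpha}{2}\sum_n a_n^2\bigl(2f'(\|u\|_{L^2}^2)+4f''(\|u\|_{L^2}^2)(u,e_n)^2\bigr)$. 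The factor $\alpha$ cancels, so stationarity yields, for every such $f$,
\begin{equation*}
\int_{E_N}\Bigl[-2\mathcal{M}(u)f'(\|u\|_{L^2}^2)+2f''(\|u\|_{L^2}^2)\sum_n a_n^2(u,e_n)^2+A_N^0\,f'(\|u\|_{L^2}^2)\Bigr]\,d\mu_N^\alpha=0 .
\end{equation*}
Using the uniform integrability of $\mathcal{M}(u)$ from \eqref{estimate6}--\eqref{estimate8} together with $\sum_n a_n^2(u,e_n)^2\le\|(a_n)\|_{\ell^\infty}^2\|u\|_{L^2}^2$, I would pass to the limits $\alpha\to0$ and $N\to\infty$ to obtain the same identity for $\mu$. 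Writing $\rho$ for the law of $\|u\|_{L^2}^2$ under $\mu$ and conditioning the coefficients on $\{\|u\|_{L^2}^2=x\}$, the identity is the weak form of a second-order elliptic ODE for $\rho$ whose leading coefficient $\sigma(x)=\mathbb{E}_\mu\bigl[\sum_n a_n^2(u,e_n)^2\mid\|u\|_{L^2}^2=x\bigr]$ is strictly positive for a.e. $x>0$; elliptic regularity then forces $\rho$ to have a density on $(0,\infty)$. Absolute continuity of the law of $\|u\|_{L^2}$ follows, and since an atom of $\mu$ at $u_0$ would put mass on $\{\|u\|_{L^2}^2=\|u_0\|_{L^2}^2\}$, the absence of atoms of $\rho$ gives the absence of atoms of $\mu$.

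For item (2) I would reproduce the invariance-transfer diagram already used for $\mu_N$: for bounded Lipschitz $f$, invariance of $\mu_{N_k}$ gives $\int f(\phi_{N_k}^t u)\,d\mu_{N_k}=\int f\,d\mu_{N_k}$, and combining the weak convergence $\mu_{N_k}\rightharpoonup\mu$, the Feller property, and the local uniform convergence $\phi_{N_k}^t\to\phi^t$ of Lemma~\ref{local uniform} (with the same $B_R$-truncation and tail control as before) passes this to $\int f(\phi^t u)\,d\mu=\int f\,d\mu$; here $\phi^t$ is defined $\mu$-a.e. because $\mu(\Sigma)=1$ and solutions are global on $\Sigma$. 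Item (3) is the deterministic level-set bookkeeping: for $u_{0,N}\in\Sigma^i_{N,s'}$ the uniform bound \eqref{Est_individ} controls $\|\phi_N^{t+\tau}u_{0,N}\|_{H^{s'}}\le C(\cdots)\,i\,(1+|t+\tau|)^\epsilon$ along the whole forward orbit, and the numerology $\tilde k=\epsilon^{-1}$ (so that $(j^{\tilde k})^\epsilon=j$) lets one absorb the factor $(1+|t|)^\epsilon$ and the loss from $H^{s'}\hookrightarrow H^{s_1}$ ($s_1<s'$) into a single $i_1=i_1(t)$, placing $\phi_N^t u_{0,N}$ in every defining ball $B^{ii_1,j}_{N,s_1}$ and hence in $\Sigma^{ii_1}_{N,s_1}$. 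Finally, for item (4) I would pass (3) to the limit along the Skorokhod sequence defining $\Sigma^i_{s'}$: if $u_{N_k}\to u$ with $u_{N_k}\in\Sigma^i_{N_k,s'}$, then $\phi_{N_k}^t u_{N_k}\to\phi^t u$ by Lemma~\ref{local uniform} and $\phi_{N_k}^t u_{N_k}\in\Sigma^{ii_1}_{N_k,s_1}$, so $\phi^t u\in\Sigma^{ii_1}_{s_1}\subset\Sigma_{s_1}$; hence $\phi^t\Sigma_{s'}\subset\Sigma_{s_1}$, and intersecting over a sequence $s'_n\uparrow s^-$ gives $\phi^t\Sigma\subset\Sigma$. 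Applying the same to $-t$ gives $\phi^{-t}\Sigma\subset\Sigma$, i.e. $\Sigma\subset\phi^t\Sigma$, whence $\phi^t\Sigma=\Sigma$.

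The main obstacle is item (1): obtaining genuine absolute continuity rather than mere tightness of the law. The two delicate points are (a) justifying that the weak identity survives the double limit with its coefficients intact --- i.e. uniform integrability of $\mathcal{M}$ and $\sum_n a_n^2(u,e_n)^2$ and commutation of conditioning on $\|u\|_{L^2}^2$ with the limit --- and (b) verifying the non-degeneracy $\sigma(x)>0$ for a.e. $x>0$, which is subtle because the real forcing $\sum_n a_n e_n\,d\mathcal{B}_n$ only excites the directions $(u,e_n)=\mathrm{Re}\,u_n$ and is therefore degenerate in the conjugate directions. Everything in items (2)--(4) is routine once the convergence lemma and the uniform bound \eqref{Est_individ} are invoked.
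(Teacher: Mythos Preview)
Your proposal is essentially correct and follows the paper's approach closely. Two remarks on the differences.

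For item (2), the paper inserts two reductions you gloss over: via Ulam's theorem it reduces to showing $\mu(\phi^{-t}K)\leq\mu(K)$ for \emph{compact} $K$ (then reverses the inequality by writing $S=\phi^{-t}\phi^tS$), and it reduces to $|t|\leq\tau$ small so that the local convergence Lemma~\ref{local uniform} applies directly on a ball $B_R(H^s)$ supporting the test function. Your ``same $B_R$-truncation and tail control as before'' is the right instinct, but the paper's two reductions are what make the local-in-time convergence lemma suffice without further work.

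For item (3), your route is actually a bit different from the paper's and slightly simpler. The paper first chooses $i_1$ so that $j^{\tilde k}+t\leq(ji_1)^{\tilde k}$, obtaining $\|\phi_N^{t_1+t}u_{0,N}\|_{H^{s'}}\leq 2Ci\,(ji_1)$ for $t_1\leq j^{\tilde k}$, and then \emph{interpolates} this against the conserved $L^2$-norm bound $\|\phi_N^{t_1+t}u_{0,N}\|_{L^2}\leq 2Ci$ to get $\|\cdot\|_{H^{s_1}}\leq 2Ci(ji_1)^\theta$ with $\theta<1$; the purpose of the interpolation is precisely to absorb the constant $2C$ into $i_1$ via $2Ci_1^\theta\leq i_1$. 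You instead use the trivial embedding $H^{s'}\hookrightarrow H^{s_1}$ and absorb both $2C$ and the time-shift factor $(1+|t|)^\epsilon$ directly into a single $i_1(t)$ (using subadditivity of $x\mapsto x^\epsilon$). Both arguments reach the same conclusion; yours avoids invoking $L^2$-conservation at the cost of a slightly larger $i_1$. Your phrase ``the loss from $H^{s'}\hookrightarrow H^{s_1}$'' is a misnomer---there is no loss in that direction---but the computation behind it is fine.

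For item (1), the paper simply refers to Shirikyan's argument and Theorem~9.1 of \cite{sy2021almost}; your sketch is exactly that argument, and the two obstacles you flag (uniform integrability through the double limit, and non-degeneracy of the conditional diffusion coefficient) are the right ones.
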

\begin{proof}
\leavevmode
    \begin{enumerate}
 \item The proof uses an argument of Shirikyan \cite{shirikyan2011local}, and follows Theorem $9.1$ in Sy \cite{sy2021almost}.\\
 \leavevmode
 \item Let us prove the invariance of $\mu$ under the flot $\phi^t.$\\
  The Ulam's theorem ensures that $\mu$ is regular: for any \( S \in \text{Bor}(H^s) \)
\[
\mu(S) = \sup\{\mu(K), K \subset S \text{ compact} \}.
\]
Therefore, it will just suffices to prove invariance for compact sets. Indeed, we then obtain, for any \( t \),
\[
\mu(\phi^{-t}S) = \sup\{\mu(K), K \subset \phi^{-t}S \text{ compact} \} = \sup\{\mu(\phi^t K), K \subset \phi^{-t}S \text{ compact} \} 
\]
\[
= \sup\{\mu(\phi^t K), \phi^t K \subset S, K \text{ compact} \} \leq \sup\{\mu(C), C \subset S \text{ compact} \} = \mu(S),
\]
where we used the fact that \( \phi^t \) is continuous in space, therefore it transforms compact sets into compact sets.

Using the inequality above, we also have for any \( t \) that
\[
\mu(S) = \mu(\phi^{-t} \phi^t S) \leq \mu(\phi^t S).
\]
Since \( t \) is arbitrary, we then obtain the invariance.

Now we claim that it also suffices to show the invariance only on a fixed interval \( [-\tau, \tau] \), where \( \tau > 0 \) can be as small as we want. Indeed, for \( \tau \leq t \leq 2\tau \), one has
\[
\mu(\phi^{-t} K) = \mu(\phi^{-\tau} \phi^{t-\tau} K) = \mu(\phi^{t-\tau} K) = \mu(K) \quad (\text{using that } 0 \leq t - \tau \leq \tau),
\]
and for greater values of \( t \), we can iterate. A similar argument works for negative values of \( t \).

Our proof is then reduced to showing invariance for compact sets on a small time interval. Therefore, it suffices to show it on the balls of \( H^s \). Below is the idea of the proof:
\[
\begin{tikzcd}
\phi_N^{t*} \mu_{N_k} \arrow[r,equal, "\text{(I)}"] \arrow[d, "\text{(III)}"'] & \mu_{N_k} \arrow[d, "\text{(II)}"] \\
\Phi^{t*} \mu \arrow[r,equal, "\text{(IV)}"'] & \mu
\end{tikzcd}
\]
We are going to prove the converge (III) to conclude the invariance (IV).\\
Let \( f \in C_b(H^s) \), supported on a ball \( B_R(H^s) \). Assume that \( f \) is Lipschitz in the topology of \( H^{s'} \), \( s' < s \). Let \( \tau \) be the associated time existence provided by Proposition 2.3. Then for \( t < \tau \), we have
\begin{align*}
( \Phi^{t*}_N\mu_{N_k} , f) - (\Phi^{t*} \mu , f) &= (\mu_{N_k} , \Phi_N^t f) - (\mu , \Phi^t f)\\
&= (\mu_{N_k}, \Phi_N^tf - \Phi^tf) - (\mu - \mu_{N_k}, \Phi^tf) = A_1 - A_2.
\end{align*}
By the continuity property of \( \phi^t \), we have that \( \Phi^t f \in C_b(H^s) \). Then by weak convergence of \( \mu_{N_k} \) to \( \mu \) on \( H^{s'} \), we have that \( A_2 \to 0 \) as \( N \to \infty \).

Now using the Lipschitz property of \( f \), we have thanks the local uniform convergence,
\[
|A_1| \leq C_f \sup_{u_0 \in B_R(H^s)} \| \phi_N^t(P_Nu_0) - \phi^t(u_0) \|_{H^{s'}} \mu_N (B_R(H^{s'}))
\]
\[
\leq C_f \sup_{u_0 \in B_R(H^s)} \| \phi_N^t(P_Nu_0) - \phi^t(u_0) \|_{H^{s'}} \to 0, \text{ as } N \to \infty.
\]
We obtain the claim. \\
\item Let us fix \( s' \in (s_{M^d}, s^-] \). Let \( u_{0,N} \in \Sigma_{N,s'}^{i} \), and $t\in \mathbb{R}$. Without loss of generality, assume \( t > 0 \) then for any \( j \geq 1 \), we have
\[
\|\phi^{t_1}_N u_{0,N} \|_{H^{s'}} \leq 2Ci j, \quad t_1 \leq j^{\tilde{k}}.
\]

Let $i_1 := i_1(t) \text{ be such that for every } j \geq 1, j^{\tilde{k}} + t \leq (ji_1)^{\tilde{k}}. \text{ We then have }$\\ $$\|\phi_N^{t_1 + t} u_{0,N} \|_{H^{s'}} \leq 2C~ i ( j i_1), \quad t_1 \leq j^{\tilde{k}}.$$
Now, thanks to Proposition \ref{globalization1}, we have, for every $ u_{0,N} \in \Sigma_{N,s'}^{i}$
\[
\lVert u_{0,N} \rVert_{L^2} \leq \| u_{0,N} \|_{H^{s'}} \leq 2Ci,
\]
therefore, since the \( L^2 \)-norm is preserved, we have, for every \( u_{0,N} \in \Sigma_{N,s'}^{i} \), that
\[
\lVert \phi_N^{t_1 + t} u_{0,N} \rVert_{L^2} \leq 2Ci.
\]
Hence for every \(  s_{M^d}< s_1 < s' \), we use an interpolation to see that there is \( \theta \in (0, 1) \) such that for all \( t_1 \leq j^{\tilde{k}} \),
\[
\|\phi_N^{t + t_1} u_{0,N} \|_{H^{s_1}} \leq \|\phi_N^{t + t_1} u_{0,N} \|_{L^2}^{1 - \theta} \|\phi_N^{t + t_1} u_{0,N} \|^\theta_{H^{s'}} \leq  ~(2C)^{1 - \theta} (2C)^\theta  i^{1 - \theta} (i( j i_1))^\theta
\]
\[
\leq  ~  (i i_1)j.
\]
The last inequality above follows from the fact that  $i_1$ can be taken so large that $2Ci_1^\theta\leq i_1$ (since $\theta<1$). We already have that $j^\theta\leq j$. The claim follows.\\\\

\item Since any \( \Sigma_{s'} \) is of full \( \mu \)-measure and the intersection is countable, we obtain the first statement.

To prove the second statement, let us take \( u_0 \in \Sigma \), then \( u_0 \) belongs to each \( \Sigma_{s'} \), \( s' \in l \).

Firstly, let us consider \( u_0 \in \Sigma_{s'}^{i} \). Therefore \( u_0 \) is the limit of a sequence \( (u_{0,N}) \) such that \( u_{0,N} \in \Sigma_{N,s'}^{i} \) for every \( N \).

Now, from the above statement, there is \( i_1 := i_1(t) \) such that \( \phi^t_N (u_{0,N}) \in \Sigma^{ii_1}_{N,s_1} \).\\ By using the convergence $(\lim_{N \to \infty} \| \phi^t u_0 - \phi^t_N u_{0,N} \|_{H^{s_1}} = 0)$, we see that \( \phi^t(u_0) \in \Sigma^{ii_1}_{s_1} \).
We conclude that
\[
\phi^t (\Sigma_{s'}^{i}) \subset \Sigma_{s_1}^{ii_1}  \subset \Sigma_{s_1}.
\]
It follows that \( \phi^t \Sigma \subset \Sigma \).

Now, let \( u \) be in \( \Sigma \), since \( \phi^t \) is well-defined on \( \Sigma \), we can set \( u_0 = \phi^{-t} u \), we then have \( u = \phi^t u_0 \) and hence \( \Sigma \subset \phi^t \Sigma \). 
 \end{enumerate}
This completes the proof.
 \end{proof}
\bibliography{ref}

\begin{thebibliography}{10}

\bibitem{an}
R.~Anton.
\newblock Cubic nonlinear {S}chr\"{o}dinger equation on three dimensional balls with radial data.
\newblock {\em Comm. Partial Differential Equations}, 33(10-12):1862--1889, 2008.

\bibitem{anton2008strichartz}
R.~Anton.
\newblock Strichartz inequalities for lipschitz metrics on manifolds and nonlinear schr{\"o}dinger equation on domains.
\newblock {\em Bulletin de la Soci{\'e}t{\'e} Math{\'e}matique de France}, 136(1):27--65, 2008.

\bibitem{beceanu2019large}
M.~Beceanu, Q.~Deng, A.~Soffer, and Y.~Wu.
\newblock Large global solutions for nonlinear {S}chr{\" o}dinger equations iii, energy-supercritical cases.
\newblock {\em arXiv preprint arXiv:1901.07709}, 2019.

\bibitem{blair2012strichartz}
M.~D. Blair, H.~F. Smith, and C.~D. Sogge.
\newblock Strichartz estimates and the nonlinear {S}chr{\"o}dinger equation on manifolds with boundary.
\newblock {\em Mathematische Annalen}, 354(4):1397--1430, 2012.

\bibitem{bourgain1993fourier}
J.~Bourgain.
\newblock Fourier transform restriction phenomena for certain lattice subsets and applications to nonlinear evolution equations.
\newblock {\em Geom. funct. anal}, 3(2):107--156, 1993.

\bibitem{bourgain1994periodic}
J.~Bourgain.
\newblock Periodic nonlinear {S}chr{\"o}dinger equation and invariant measures.
\newblock {\em Communications in Mathematical Physics}, 166:1--26, 1994.

\bibitem{bourgNLS96}
J.~Bourgain.
\newblock Invariant measures for the {$2$}{D}-defocusing nonlinear {S}chr\"{o}dinger equation.
\newblock {\em Comm. Math. Phys.}, 176(2):421--445, 1996.

\bibitem{bourRef}
J.~Bourgain.
\newblock Refinements of {S}trichartz' inequality and applications to {$2$}{D}-{NLS} with critical nonlinearity.
\newblock {\em Internat. Math. Res. Notices}, 1998.

\bibitem{bourgain1999global}
J.~Bourgain.
\newblock Global wellposedness of defocusing critical nonlinear {S}chr{\"o}dinger equation in the radial case.
\newblock {\em Journal of the American Mathematical Society}, 12(1):145--171, 1999.

\bibitem{bourgain2014invariant}
J.~Bourgain and A.~Bulut.
\newblock Invariant gibbs measure evolution for the radial nonlinear wave equation on the 3d ball.
\newblock {\em Journal of Functional Analysis}, 266(4):2319--2340, 2014.

\bibitem{bourgain2015proof}
J.~Bourgain and C.~Demeter.
\newblock The proof of the l2 decoupling conjecture.
\newblock {\em Annals of mathematics}, pages 351--389, 2015.

\bibitem{brezis1980nonlinear}
H.~Brezis and T.~Gallouet.
\newblock Nonlinear schr{\"o}dinger evolution equations.
\newblock {\em Nonlinear Analysis}, 4(4):677--681, 1980.

\bibitem{bringmann2024invariant}
B.~Bringmann, Y.~Deng, A.~R. Nahmod, and H.~Yue.
\newblock Invariant gibbs measures for the three dimensional cubic nonlinear wave equation.
\newblock {\em Inventiones mathematicae}, pages 1--279, 2024.

\bibitem{burq2003cauchy}
N.~Burq, P.~G{\'e}rard, and N.~Tzvetkov.
\newblock The {C}auchy problem for the nonlinear {S}chr{\"o}dinger equation on a compact manifold.
\newblock {\em Journal of Nonlinear Mathematical Physics}, 10(Suppl 1):12--27, 2003.

\bibitem{burq2004strichartz}
N.~Burq, P.~G{\'e}rard, and N.~Tzvetkov.
\newblock Strichartz inequalities and the nonlinear {S}chr{\"o}dinger equation on compact manifolds.
\newblock {\em American Journal of Mathematics}, 126(3):569--605, 2004.

\bibitem{bgtBil}
N.~Burq, P.~G\'{e}rard, and N.~Tzvetkov.
\newblock Bilinear eigenfunction estimates and the nonlinear {S}chr\"{o}dinger equation on surfaces.
\newblock {\em Invent. Math.}, 159(1):187--223, 2005.

\bibitem{btt18}
N.~Burq, L.~Thomann, and N.~Tzvetkov.
\newblock Remarks on the {G}ibbs measures for nonlinear dispersive equations.
\newblock {\em Ann. Fac. Sci. Toulouse Math. (6)}, 27(3):527--597, 2018.

\bibitem{cazenave1990cauchy}
T.~Cazenave and F.~B. Weissler.
\newblock The {C}auchy problem for the critical nonlinear {S}chr{\"o}dinger equation in ${H}^s$.
\newblock {\em Nonlinear Analysis: Theory, Methods \& Applications}, 14(10):807--836, 1990.

\bibitem{christ1991dispersion}
F.~M. Christ and M.~I. Weinstein.
\newblock Dispersion of small amplitude solutions of the generalized {K}orteweg-de {V}ries equation.
\newblock {\em Journal of functional analysis}, 100(1):87--109, 1991.

\bibitem{ckst}
J.~Colliander, M.~Keel, G.~Staffilani, H.~Takaoka, and T.~Tao.
\newblock Almost conservation laws and global rough solutions to a nonlinear {S}chr\"{o}dinger equation.
\newblock {\em Math. Res. Lett.}, 9(5-6):659--682, 2002.

\bibitem{CKSTTcrtNLS}
J.~Colliander, M.~Keel, G.~Staffilani, H.~Takaoka, and T.~Tao.
\newblock Global well-posedness and scattering for the energy-critical nonlinear {S}chr\"{o}dinger equation in {$\mathbb{R^3}$}.
\newblock {\em Ann. of Math. (2)}, 167(3):767--865, 2008.

\bibitem{colliander2010transfer}
J.~Colliander, M.~Keel, G.~Staffilani, H.~Takaoka, and T.~Tao.
\newblock Transfer of energy to high frequencies in the cubic defocusing nonlinear schr{\"o}dinger equation.
\newblock {\em Inventiones mathematicae}, 181(1):39--113, 2010.

\bibitem{cordoba2003pointwise}
A.~C{\'o}rdoba and D.~C{\'o}rdoba.
\newblock A pointwise estimate for fractionary derivatives with applications to partial differential equations.
\newblock {\em Proceedings of the National Academy of Sciences}, 100(26):15316--15317, 2003.

\bibitem{da2002two}
G.~Da~Prato and A.~Debussche.
\newblock Two-dimensional navier--stokes equations driven by a space--time white noise.
\newblock {\em Journal of Functional Analysis}, 196(1):180--210, 2002.

\bibitem{deng2024invariant}
Y.~Deng, A.~Nahmod, and H.~Yue.
\newblock Invariant {G}ibbs measures and global strong solutions for nonlinear {S}chr{\"o}dinger equations in dimension two.
\newblock {\em Annals of Mathematics}, 200(2):399--486, 2024.

\bibitem{dodson2019global}
B.~Dodson.
\newblock Global well-posedness and scattering for nonlinear {S}chr{\"o}dinger equations with algebraic nonlinearity when ${d= 2, 3}$ and ${u_0}$ is radial.
\newblock {\em Cambridge journal of mathematics}, 7, 2019.

\bibitem{foldesSy2021invariant}
J.~F{\"o}ldes and M.~Sy.
\newblock Invariant measures and global well posedness for the sqg equation.
\newblock {\em Archive for Rational Mechanics and Analysis}, 241(1):187--230, 2021.

\bibitem{foldessy2023almost}
J.~Foldes and M.~Sy.
\newblock Almost sure global well-posedness for {3D} {E}uler equation and other fluid dynamics models.
\newblock {\em arXiv preprint arXiv:2401.00332}, 2023.

\bibitem{ginibre1985global}
J.~Ginibre and G.~Velo.
\newblock The global {C}auchy problem for the non linear {S}chr{\"o}dinger equation revisited.
\newblock In {\em Annales de l'Institut Henri Poincar{\'e} C, Analyse non lin{\'e}aire}, volume~2, pages 309--327. Elsevier, 1985.

\bibitem{GrllakscrtNLS}
M.~G. Grillakis.
\newblock On nonlinear {S}chr\"{o}dinger equations.
\newblock {\em Comm. Partial Differential Equations}, 25(9-10):1827--1844, 2000.

\bibitem{hani2015modified}
Z.~Hani, B.~Pausader, N.~Tzvetkov, and N.~Visciglia.
\newblock Modified scattering for the cubic schr{\"o}dinger equation on product spaces and applications.
\newblock In {\em Forum of mathematics, Pi}, volume~3, page~e4. Cambridge University Press, 2015.

\bibitem{herr2013quintic}
S.~Herr.
\newblock The quintic nonlinear schr{\"o}dinger equation on three-dimensional zoll manifolds.
\newblock {\em American Journal of Mathematics}, 135(5):1271--1290, 2013.

\bibitem{htt}
S.~Herr, D.~Tataru, and N.~Tzvetkov.
\newblock Global well-posedness of the energy-critical nonlinear {S}chr\"{o}dinger equation with small initial data in {$H^1(\Bbb T^3)$}.
\newblock {\em Duke Math. J.}, 159(2):329--349, 2011.

\bibitem{ionescu2012global}
A.~Ionescu, B.~Pausader, and G.~Staffilani.
\newblock On the global well-posedness of energy-critical schr{\"o}dinger equations in curved spaces.
\newblock {\em Analysis \& PDE}, 5(4):705--746, 2012.

\bibitem{ionpaus}
A.~D. Ionescu and B.~Pausader.
\newblock The energy-critical defocusing {NLS} on {$\Bbb T^3$}.
\newblock {\em Duke Math. J.}, 161(8):1581--1612, 2012.

\bibitem{ivanovici2010schrodinger}
O.~Ivanovici.
\newblock On the {S}chr{\"o}dinger equation outside strictly convex obstacles.
\newblock {\em Analysis \& PDE}, 3(3):261--293, 2010.

\bibitem{keel1998endpoint}
M.~Keel and T.~Tao.
\newblock Endpoint {S}trichartz estimates.
\newblock {\em American Journal of Mathematics}, 120(5):955--980, 1998.

\bibitem{kenig2011nondispersive}
C.~E. Kenig and F.~Merle.
\newblock Nondispersive radial solutions to energy supercritical non-linear wave equations, with applications.
\newblock {\em American journal of mathematics}, 133(4):1029--1065, 2011.

\bibitem{killip2010energy}
R.~Killip and M.~Visan.
\newblock Energy-{S}upercritical {NLS}: Critical $\dot{H}^s-$bounds imply scattering.
\newblock {\em Communications in Partial Differential Equations}, 35(6):945--987, 2010.

\bibitem{kuksin2004randomly}
S.~Kuksin and A.~Shirikyan.
\newblock Randomly forced {CGL} equation: stationary measures and the inviscid limit.
\newblock {\em Journal of Physics A: Mathematical and General}, 37(12):3805, 2004.

\bibitem{kuksin2012mathematics}
S.~Kuksin and A.~Shirikyan.
\newblock {\em Mathematics of two-dimensional turbulence}, volume 194.
\newblock Cambridge University Press, 2012.

\bibitem{kuksin2004eulerian}
S.~B. Kuksin.
\newblock The eulerian limit for 2d statistical hydrodynamics.
\newblock {\em Journal of statistical physics}, 115:469--492, 2004.

\bibitem{latocca2023construction}
M.~Latocca.
\newblock Construction of high regularity invariant measures for the 2d {E}uler equations and remarks on the growth of the solutions.
\newblock {\em Communications in Partial Differential Equations}, 48(1):22--53, 2023.

\bibitem{lebowitz1988statistical}
J.~L. Lebowitz, H.~A. Rose, and E.~R. Speer.
\newblock Statistical mechanics of the nonlinear {S}chr{\"o}dinger equation.
\newblock {\em Journal of statistical physics}, 50(3):657--687, 1988.

\bibitem{merle2022blow}
F.~Merle, P.~Rapha{\"e}l, I.~Rodnianski, and J.~Szeftel.
\newblock On blow up for the energy super critical defocusing nonlinear {S}chr{\"o}dinger equations.
\newblock {\em Inventiones mathematicae}, pages 1--167, 2022.

\bibitem{nahmod2012invariant}
A.~R. Nahmod, T.~Oh, L.~Rey-Bellet, and G.~Staffilani.
\newblock Invariant weighted wiener measures and almost sure global well-posedness for the periodic derivative nls.
\newblock {\em Journal of the European Mathematical Society}, 14(4):1275--1330, 2012.

\bibitem{ptw}
B.~Pausader, N.~Tzvetkov, and X.~Wang.
\newblock Global regularity for the energy-critical {NLS} on {$\Bbb{S}^3$}.
\newblock {\em Ann. Inst. H. Poincar\'{e} Anal. Non Lin\'{e}aire}, 31(2):315--338, 2014.

\bibitem{ryckman2007global}
E.~Ryckman and M.~Visan.
\newblock Global well-posedness and scattering for the defocusing energy-critical nonlinear {S}chr{\"o}dinger equation in $r^{1+4}$.
\newblock {\em American journal of mathematics}, 129(1):1--60, 2007.

\bibitem{shirikyan2011local}
A.~Shirikyan.
\newblock Local times for solutions of the complex {G}inzburg--{L}andau equation and the inviscid limit.
\newblock {\em Journal of mathematical analysis and applications}, 384(1):130--137, 2011.

\bibitem{sogge1988concerning}
C.~D. Sogge.
\newblock Concerning the {Lp} norm of spectral clusters for second-order elliptic operators on compact manifolds.
\newblock {\em Journal of functional analysis}, 77(1):123--138, 1988.

\bibitem{staffilani2002strichartz}
G.~Staffilani and D.~Tataru.
\newblock Strichartz estimates for a {S}chr{\"o}dinger operator with nonsmooth coefficients.
\newblock {\em Communications in Partial Differential Equations}, 27(7-8):1337--1372, 2002.

\bibitem{strichartz1977restrictions}
R.~S. Strichartz.
\newblock Restrictions of {F}ourier transforms to quadratic surfaces and decay of solutions of wave equations.
\newblock 1977.

\bibitem{sybo}
M.~Sy.
\newblock Invariant measure and long time behavior of regular solutions of the {B}enjamin--{O}no equation.
\newblock {\em Analysis \& PDE}, 11(8):1841--1879, 2018.

\bibitem{sykg}
M.~Sy.
\newblock Invariant measure and large time dynamics of the cubic {K}lein--{G}ordon equation in 3{D}.
\newblock {\em Stochastics and Partial Differential Equations: Analysis and Computations}, 7(3):379--416, 2019.

\bibitem{sy2021almost}
M.~Sy.
\newblock Almost sure global well-posedness for the energy supercritical schr{\"o}dinger equations.
\newblock {\em Journal de Math{\'e}matiques Pures et Appliqu{\'e}es}, 154:108--145, 2021.

\bibitem{syyu2020almost}
M.~Sy and X.~Yu.
\newblock Almost sure global well-posedness for the energy supercritical nls on the unit ball of $\mathbb{R}^3$.
\newblock {\em arXiv preprint arXiv:2007.00766}, 2020.

\bibitem{sy2022global}
M.~Sy and X.~Yu.
\newblock Global well-posedness and long-time behavior of the fractional nls.
\newblock {\em Stochastics and Partial Differential Equations: Analysis and Computations}, 10(4):1261--1317, 2022.

\bibitem{syyu2022global}
M.~Sy and X.~Yu.
\newblock Global well-posedness and long-time behavior of the fractional nls.
\newblock {\em Stochastics and Partial Differential Equations: Analysis and Computations}, 10(4):1261--1317, 2022.

\bibitem{syyu2022globalfrac}
M.~Sy and X.~Yu.
\newblock Global well-posedness for the cubic fractional nls on the unit disk.
\newblock {\em Nonlinearity}, 35(4):2020, 2022.

\bibitem{tao2004global}
T.~Tao.
\newblock Global well-posedness and scattering for the higher-dimensional energy-critical nonlinear schr{\"o}dinger equation for radial data.
\newblock {\em New York J. Math}, 11:57--80, 2005.

\bibitem{tao2006nonlinear}
T.~Tao.
\newblock {\em Nonlinear dispersive equations: local and global analysis}.
\newblock Number 106. American Mathematical Soc., 2006.

\bibitem{tzvNLS06}
N.~Tzvetkov.
\newblock Invariant measures for the nonlinear {S}chr\"{o}dinger equation on the disc.
\newblock {\em Dyn. Partial Differ. Equ.}, 3(2):111--160, 2006.

\bibitem{tzvNLS}
N.~Tzvetkov.
\newblock Invariant measures for the defocusing nonlinear {S}chr\"{o}dinger equation.
\newblock {\em Ann. Inst. Fourier (Grenoble)}, 58(7):2543--2604, 2008.

\bibitem{visan2007defocusing}
M.~Visan.
\newblock The defocusing energy-critical nonlinear {S}chr{\"o}dinger equation in higher dimensions.
\newblock 2007.

\bibitem{wang2016energy}
W.-M. Wang.
\newblock Energy supercritical nonlinear {S}chr{\"o}dinger equations: quasiperiodic solutions.
\newblock 2016.

\end{thebibliography}
\bibliographystyle{abbrv}
\end{document}